\documentclass[a4paper,11pt, english]{article}
\usepackage[utf8]{inputenc}
\usepackage[T1]{fontenc}
\usepackage{graphicx}
\usepackage[a4paper]{geometry}
\usepackage{amsmath,amsfonts,amssymb,amsthm,epsfig,epstopdf,url,array}
\usepackage{rotating}
\usepackage[colorlinks=true,citecolor=red,linkcolor=blue,pdfpagetransition=Blinds]{hyperref}
\usepackage{cleveref}
\usepackage{nameref}
\usepackage{enumitem}
\usepackage{comment}
\Crefname{paragraph}{Section}{Sections}
\usepackage{fancyhdr}
\usepackage{cases}
\usepackage{mathrsfs}

\usepackage{pgfplots}

\usepackage{fullpage}

\newcommand{\dive}[1]{\mathrm{div}}

\providecommand{\keywords}[1]{\noindent {\textit{Keywords:}} #1}

\usepackage{aliascnt}

\theoremstyle{plain}

\newtheorem{prop}{Proposition}[section]
\newaliascnt{theo}{prop}
\newtheorem{theo}[theo]{Theorem}
\aliascntresetthe{theo}

\newaliascnt{lem}{prop}
\newtheorem{lem}[lem]{Lemma}
\aliascntresetthe{lem}

\newaliascnt{defprop}{prop}

\aliascntresetthe{defprop}

\newaliascnt{cor}{prop}
\newtheorem{cor}[cor]{Corollary}
\aliascntresetthe{cor}

\newaliascnt{rmk}{prop}
\newtheorem{rmk}[rmk]{Remark}
\aliascntresetthe{rmk}

\theoremstyle{definition}

\newaliascnt{defi}{prop}

\aliascntresetthe{defi}

\newaliascnt{app}{prop}

\aliascntresetthe{app}

\newaliascnt{claim}{prop}

\aliascntresetthe{claim}

\newaliascnt{ass}{prop}

\aliascntresetthe{ass}

\crefname{proposition}{Proposition}{Propositions}
\Crefname{proposition}{Proposition}{Propositions}

\crefname{theo}{Theorem}{Theorems}
\Crefname{theo}{Theorem}{Theorems}

\crefname{lemma}{Lemma}{Lemmas}
\Crefname{lemma}{Lemma}{Lemmas}

\crefname{defprop}{Definition--Proposition}{Definition--Propositions}
\Crefname{defprop}{Definition--Proposition}{Definition--Propositions}

\crefname{cor}{Corollary}{Corollaries}
\Crefname{cor}{Corollary}{Corollaries}

\crefname{rmk}{Remark}{Remarks}
\Crefname{rmk}{Remark}{Remarks}

\crefname{defi}{Definition}{Definitions}
\Crefname{defi}{Definition}{Definitions}

\crefname{app}{Application}{Applications}
\Crefname{app}{Application}{Applications}

\crefname{claim}{Claim}{Claims}
\Crefname{claim}{Claim}{Claims}

\crefname{ass}{Assumption}{Assumptions}
\Crefname{ass}{Assumption}{Assumptions}

\def\dx{\,\textnormal{d}x}
\def\dt{\textnormal{d}t}
\def\d{\textnormal{d}}

\newcommand{\vertiii}[1]{{\left\vert\kern-0.25ex\left\vert\kern-0.25ex\left\vert #1 
    \right\vert\kern-0.25ex\right\vert\kern-0.25ex\right\vert}}

\makeatletter
\let\original@addcontentsline\addcontentsline
\newcommand{\dummy@addcontentsline}[3]{}
\newcommand{\DeactivateToc}{\let\addcontentsline\dummy@addcontentsline}
\newcommand{\ActivateToc}{\let\addcontentsline\original@addcontentsline}
\makeatother

\begin{document}

\title{A Lebeau--Robbiano approach to the controllability of the Boussinesq system with a reduced number of controls}
\author{V\'ictor Hern\'andez-Santamar\'ia\thanks{V. Hern\'andez-Santamar\'ia is supported by Project CBF2023-2024-116 of SECIHTHI and by UNAM-DGAPA-PAPIIT grants IA103826, IN117525, and IN102925 (Mexico).}}

\maketitle

\begin{abstract}
In this paper, we prove the local null controllability of the Boussinesq
system in dimensions two and three with a reduced number of localized
controls. More precisely, the controls act on the temperature equation and on
only $N-2$ components of the velocity equation. Our proof is based on a
spectral approach in the spirit of the Lebeau--Robbiano method.

The main difficulty comes from the coupled structure of the system. The
velocity and temperature equations are governed by the Stokes operator and
the Dirichlet Laplacian, respectively. Their natural spectral decompositions
are different, and there is no common spectral localization naturally adapted
to the coupled system. Therefore, the usual componentwise spectral argument
cannot be applied directly. We show that the cascade structure of the
linearized system makes it possible to overcome this difficulty. The main
ingredient is a mixed observability estimate in which only the Stokes
component is spectrally localized, while the heat component is treated
without any frequency restriction. Combining this estimate with the
dissipation of the high Stokes frequencies allows us to carry out a
Lebeau--Robbiano iteration for the linearized system. The resulting linear
controllability estimate is transferred to the nonlinear Boussinesq system
through a time-iteration argument. As an important consequence, the controls
have a small-time cost bounded by $C\exp(C/T)$, which recovers the expected
parabolic order while preserving the reduced number of controls.
\end{abstract}

\keywords{Coupled parabolic systems; incompressible flows; spectral
inequalities; low-frequency estimates; quantitative controllability.} 

\smallskip
\noindent
\textit{2020 MSC:} Primary 35Q30, 93B05; Secondary 76D55, 80A19, 93C20.

\footnotesize
\tableofcontents
\normalsize

\section{Introduction}

\subsection{Motivation}

The Boussinesq system is a classical model for the motion of an incompressible
viscous fluid coupled with the evolution of a temperature field. It arises in
the description of buoyancy-driven flows, where variations of temperature
produce density effects that act as a force on the fluid. In its simplest form,
the model brings together two different but closely dissipative
dynamics: the Navier--Stokes equations for the velocity field and a
transport-diffusion equation for the temperature. This coupling appears
naturally in several physical situations, including thermal convection, oceanic
and atmospheric dynamics, and geophysical flows. We refer to
\cite{Maj03,Ped87,Vall17} for further details.

Let $\Omega\subset\mathbb R^N$, with $N\in\{2,3\}$, be a bounded domain with
smooth boundary, and let $T>0$. The Boussinesq system takes the form
\begin{equation}\label{eq:boussinesq-intro}
\left\{
\begin{array}{rcll}
y_t-\nu\Delta y+(y\cdot\nabla)y+\nabla p
&=&
\theta e_N
& \text{in } (0,T)\times\Omega,\\[1mm]
\theta_t-\kappa\Delta\theta+y\cdot\nabla\theta
&=&
0
& \text{in } (0,T)\times\Omega,\\[1mm]
\nabla\cdot y
&=&
0
& \text{in } (0,T)\times\Omega,\\[1mm]
y=0,\quad \theta=0
&&
& \text{on } (0,T)\times\partial\Omega,\\[1mm]
y(0)=y^0,\quad \theta(0)=\theta^0
&&
& \text{in } \Omega.
\end{array}
\right.
\end{equation}
The unknowns are the fluid velocity field
$y=y(t,x)=(y_1(t,x),\ldots,y_N(t,x))$, the pressure $p=p(t,x)$, and the scalar
temperature $\theta=\theta(t,x)$. Here $\nu>0$ is the viscosity,
$\kappa>0$ is the thermal diffusivity, and
\begin{equation*}
e_N:=
\begin{cases}
(0,1), & N=2,\\
(0,0,1), & N=3,
\end{cases}
\end{equation*}
denotes the direction of gravity.

The main feature of \eqref{eq:boussinesq-intro} is its two-way coupled
structure. The temperature acts on the velocity equation through the buoyancy
force $\theta e_N$, while the velocity enters the temperature equation through
the transport term $y\cdot\nabla\theta$. Although both equations have a
parabolic character, they have different analytical structures: the velocity
field is subject to the incompressibility constraint and involves the pressure,
whereas the temperature is governed by a scalar transport-diffusion equation.
At the linear level, this distinction is reflected in the different operators
governing their dissipative dynamics. The interaction between these two
structures will play a central role in the present work.

This interplay between the fluid and temperature dynamics has been the object
of extensive mathematical study. Classical well-posedness results for
Boussinesq-type systems go back at least to \cite{CDB80}. In two space
dimensions, the fully viscous and diffusive system can be treated by adapting
the classical framework for the Navier--Stokes equations; see, for instance,
\cite{Tem88}. A large literature also addresses partially dissipative and
anisotropic variants, as well as models with temperature-dependent viscosity
or diffusivity. We refer to
\cite{HL05,Cha06,DP08,DP11,WZ11,LLT13,LT16} and the references therein.

Our purpose in this paper is to investigate how the coupled dynamics described
above can be influenced by localized external inputs. More precisely, we focus
on the null controllability problem, namely whether the coupled state
$(y,\theta)$ can be driven exactly to zero at a prescribed time $T$ by means of
suitable controls.

To make this precise, let $\omega\subset\Omega$ be an arbitrary nonempty open
set where the controls are localized. We consider the controlled Boussinesq
system
\begin{equation}\label{eq:controlled-boussinesq-intro}
\left\{
\begin{array}{rcll}
y_t-\nu\Delta y+(y\cdot\nabla)y+\nabla p
&=&
\theta e_N+\mathbf 1_\omega v
& \text{in } (0,T)\times\Omega,\\[1mm]
\theta_t-\kappa\Delta\theta+y\cdot\nabla\theta
&=&
\mathbf 1_\omega v_0
& \text{in } (0,T)\times\Omega,\\[1mm]
\nabla\cdot y
&=&
0
& \text{in } (0,T)\times\Omega,\\[1mm]
y=0,\quad \theta=0
&&
& \text{on } (0,T)\times\partial\Omega,\\[1mm]
y(0)=y^0,\quad \theta(0)=\theta^0
&&
& \text{in } \Omega.
\end{array}
\right.
\end{equation}
Here $v_0$ is a scalar control acting in the temperature equation, while
$v=(v_1,\ldots,v_N)$ is a vector-valued control acting in the velocity
equation.

Our goal is to study null controllability for
\eqref{eq:controlled-boussinesq-intro} with a reduced number of controls. More
specifically, we impose two missing components in the velocity control, namely
\begin{equation}\label{eq:missing-controls-intro}
v_{N-1}\equiv 0,
\qquad
v_N\equiv 0,
\end{equation}
and we ask whether the controls can be chosen so that
\begin{equation}\label{eq:null-controllability-intro}
y(T)=0,
\qquad
\theta(T)=0
\qquad
\text{in } \Omega.
\end{equation}
Thus, in dimension $N=2$, the velocity equation is not directly controlled,
whereas in dimension $N=3$ only the first component of the velocity equation is
controlled. In both cases, the temperature equation is controlled through the
scalar input $v_0$.

\subsection{A spectral point of view}

Local controllability for the Boussinesq system, that is, controllability for
small enough initial data, has been studied mainly through Carleman-based
techniques, starting with the work \cite{Gue06}. In particular, the question of
reducing the number of controls has received an affirmative answer in several
settings. For instance, the results in
\cite{FCGIP06,Car12,BPLB22,TdTWZ24} show that one can control the system when
some components of the velocity control are removed, including configurations
closely related to \eqref{eq:missing-controls-intro}.\footnote{%
Another line of work, based on geometric control ideas, addresses global
approximate controllability properties for Boussinesq flows with arbitrary
initial data and a reduced number of controls. We refer to
\cite{NR25,Ris25,Ris26,LXZ25}. These results are different in nature from the
local null controllability problem considered here.}

The corresponding proofs generally separate the linear and nonlinear parts of
the problem. One first establishes null controllability for the system obtained
by linearizing around zero, usually by means of a suitable
Carleman estimate. The nonlinear result is then recovered by treating the
nonlinear terms as perturbations and applying an inverse mapping theorem or a
fixed-point argument. We also begin with the linearized system, but follow a
different construction in both parts of the proof. At the linear level, we
develop a spectral approach in the spirit of the Lebeau--Robbiano method. The
passage to the nonlinear system is then carried out through a quantitative
time-iteration argument.

In the present setting, linearization around the zero trajectory leads to the
controlled Stokes--heat cascade
\begin{equation}\label{eq:linearized-boussinesq-intro}
\left\{
\begin{array}{rcll}
u_t-\nu\Delta u+\nabla q
&=&
\theta e_N+\mathbf 1_\omega v
& \text{in } (0,T)\times\Omega,\\[1mm]
\theta_t-\kappa\Delta\theta
&=&
\mathbf 1_\omega v_0
& \text{in } (0,T)\times\Omega,\\[1mm]
\nabla\cdot u
&=&
0
& \text{in } (0,T)\times\Omega,\\[1mm]
u=0,\quad\theta=0
&&
& \text{on } (0,T)\times\partial\Omega,\\[1mm]
u(0)=u^0,\quad\theta(0)=\theta^0
&&
& \text{in } \Omega.
\end{array}
\right.
\end{equation}
The nonlinear terms do not appear in
\eqref{eq:linearized-boussinesq-intro}, but the coupling produced by the
buoyancy force remains. In particular, the system has a cascade structure:
the temperature evolves according to a heat equation and acts as a source term
in the Stokes equation.

Let $\mathbf A$ denote the Stokes operator, $A$ the Dirichlet Laplacian, and
$\Pi$ the Leray projection onto divergence-free vector fields. At a formal
level, system \eqref{eq:linearized-boussinesq-intro} can be written as
\begin{equation}\label{eq:linearized-boussinesq-abstract-intro}
\left\{
\begin{aligned}
u_t+\nu\mathbf A u
&=
\Pi(\theta e_N)+\Pi(\mathbf 1_\omega v),\\
\theta_t+\kappa A\theta
&=
\mathbf 1_\omega v_0.
\end{aligned}
\right.
\end{equation}
The precise functional framework, together with the well-posedness properties
of the linearized and nonlinear systems, is introduced in
\Cref{sec:functional-setting}.

For a given initial datum $(u^0,\theta^0)$, the null-controllability problem for
\eqref{eq:linearized-boussinesq-intro} consists in finding localized controls
$v_0$ and $v$, satisfying the reduced-control condition
\eqref{eq:missing-controls-intro}, such that the corresponding solution
satisfies
\begin{equation*}
u(T)=0,
\qquad
\theta(T)=0
\qquad
\text{in }\Omega.
\end{equation*}
Unlike the nonlinear problem, the linearized system is not subject to a
smallness condition on the initial datum.

Our first objective is to treat
\eqref{eq:linearized-boussinesq-intro} under the configuration
\eqref{eq:missing-controls-intro} by means of a spectral construction based on
the Lebeau--Robbiano method. For scalar parabolic equations, this method
combines two basic ingredients: a spectral inequality for low frequencies and
the natural dissipation of high frequencies. By alternating these two effects
on a suitable sequence of time intervals, one obtains null controllability.
This strategy goes back to \cite{LR95}. For the Stokes system, an analogous
construction was developed in \cite{CSL16}.

Lebeau--Robbiano-type methods have also been developed for several coupled
systems. We mention, for instance, coupled deterministic parabolic systems
\cite{LZ19}, stochastic coupled parabolic systems \cite{XLiu14,HSP22},
nonlinear reaction-diffusion systems \cite{LeBalch20COCV}, and coupled Stokes
systems \cite{LdT25}. In these settings, the spectral decompositions of the
different components are either the same or can be compared directly. At a
formal level, one can therefore introduce a common low-frequency projection
\begin{equation*}
P_\Lambda U,
\qquad
U=(u_1,\ldots,u_m),
\end{equation*}
and implement the Lebeau--Robbiano strategy on the low-frequency part of the
whole state. Depending on the structure of the coupling, this construction can
also be combined with additional arguments to reduce the number of controls.

The situation for \eqref{eq:linearized-boussinesq-intro} is different. The
velocity component is governed by the Stokes operator, whereas the temperature
component is governed by the Dirichlet Laplacian. Thus, the natural
low-frequency objects are
\begin{equation*}
P_\Lambda^S u,
\qquad
P_\Lambda^D\theta,
\end{equation*}
where $P_\Lambda^S$ and $P_\Lambda^D$ denote the Stokes and Dirichlet spectral
projections, respectively. These projections do not arise from a common
spectral basis for the coupled system.

If controls were allowed to act on the temperature equation and on every
component of the velocity equation, one could use the spectral inequalities
for the heat equation and the Stokes system established in
\cite{LR95} and \cite{CSL16}, respectively, and develop a standard
Lebeau--Robbiano iteration yielding null controllability for
\eqref{eq:linearized-boussinesq-intro}. Some care would still be needed to
handle the two spectral decompositions and the coupling term, but no component
of the state would have to be recovered indirectly through the coupling. The
resulting argument would therefore remain essentially componentwise.

This is no longer the case under condition
\eqref{eq:missing-controls-intro}. Since two velocity components are not
directly controlled, the construction must use the coupled structure of
\eqref{eq:linearized-boussinesq-intro} rather than act independently on each
component. In particular, the buoyancy coupling between the temperature and
the velocity becomes essential. However, this coupling does not preserve the spectral decompositions of the
Stokes operator and the Dirichlet Laplacian. In general, we should not expect
\begin{equation*}
P_\Lambda^S\Pi(\theta e_N)
=
\Pi\bigl((P_\Lambda^D\theta)e_N\bigr).
\end{equation*}
Consequently, a low-frequency temperature component with respect to the
Dirichlet Laplacian may generate Stokes frequencies that are not described by
the same cutoff. This prevents a direct componentwise implementation of the usual Lebeau--Robbiano iteration.

The main idea of this work is to avoid using a common spectral localization
for the two components of \eqref{eq:linearized-boussinesq-intro}. Instead, we
prove a mixed observability estimate for the adjoint Stokes--heat cascade. In
this estimate, only the Stokes terminal datum is spectrally localized, while
the terminal datum of the heat component is allowed to be arbitrary in
$L^2(\Omega)$. This asymmetric localization is natural in view of the
triangular structure of the linearized system and is the key ingredient in our
modified Lebeau--Robbiano iteration. The resulting linear controllability
estimate is then transferred to the nonlinear Boussinesq system through a
time-iteration argument. We describe this strategy in more detail below.

\subsection{Main result}

We now state the main result of this work. In the statement below, $\mathbf V$
denotes the usual space of divergence-free vector fields in
$H^1_0(\Omega)^N$. The precise definitions of the functional spaces used
throughout the paper are given in \Cref{sec:functional-setting}.

\begin{theo}
\label{thm:main-nonlinear-controllability}
For each $N\in\{2,3\}$, there exist $T_0\in(0,1)$ and $C>0$ such that, for
every $T\in(0,T_0)$, there exists $\delta_T>0$ with the following property:
for every initial datum
\begin{equation*}
(y^0,\theta^0)\in\mathbf V\times H^1_0(\Omega),
\qquad
\|y^0\|_{\mathbf V}
+
\|\theta^0\|_{H^1_0(\Omega)}
\leq
\delta_T,
\end{equation*}
there exist controls
\begin{equation*}
v_0\in L^2(0,T;L^2(\omega)),
\qquad
v\in L^2(0,T;L^2(\omega))^N,
\end{equation*}
satisfying
\begin{equation*}
v_{N-1}\equiv0,
\qquad
v_N\equiv0,
\end{equation*}
such that the corresponding controlled solution to
\eqref{eq:controlled-boussinesq-intro} satisfies
\begin{equation*}
y(T)=0,
\qquad
\theta(T)=0
\qquad
\text{in }\Omega.
\end{equation*}
Moreover, the controls satisfy the quantitative estimate
\begin{equation}
\label{eq:main-nonlinear-control-cost}
\|v_0\|_{L^2(0,T;L^2(\omega))}
+
\|v\|_{L^2(0,T;L^2(\omega))^N}
\leq
C
\exp\left(\frac{C}{T}\right)
\left(
\|y^0\|_{\mathbf V}
+
\|\theta^0\|_{H^1_0(\Omega)}
\right).
\end{equation}
\end{theo}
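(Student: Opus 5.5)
The plan has three stages: a mixed low-frequency observability estimate for the adjoint cascade, a Lebeau--Robbiano iteration for the linearized system \eqref{eq:linearized-boussinesq-intro}, and a time-iteration transfer to the nonlinear system \eqref{eq:controlled-boussinesq-intro}.

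\emph{Stage 1: adjoint cascade.} The adjoint of \eqref{eq:linearized-boussinesq-abstract-intro} is
\begin{equation*}
-\varphi_t+\nu\mathbf A\varphi=0,
\qquad
-\psi_t+\kappa A\psi=\varphi_N,
\qquad
\varphi(T)=\varphi^T,
\qquad
\psi(T)=\psi^T.
\end{equation*}
The heat component is driven by the last component of the Stokes adjoint. The observation that matches the reduced controls is $\int_0^T\!\int_\omega(|\psi|^2+\sum_{i\le N-2}|\varphi_i|^2)$. In dimension two only $\psi$ is observed.

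\emph{Stage 2: mixed spectral estimate.} Fix a Stokes cutoff $\Lambda$ and take $\varphi^T=P^S_\Lambda\varphi^T$, with $\psi^T\in L^2(\Omega)$ arbitrary. The target inequality on an interval of length $\tau$ is
\begin{equation*}
\|\varphi(0)\|^2+\|\psi(0)\|^2\le Ce^{C\sqrt\Lambda+C/\tau}\int_0^\tau\!\!\int_\omega\Bigl(|\psi|^2+\sum_{i\le N-2}|\varphi_i|^2\Bigr).
\end{equation*}
I would prove it in three moves.
\begin{enumerate}
\item Apply the heat Carleman inequality, or the heat observability with cost $e^{C/\tau}$, to $\psi$ with source $\varphi_N$. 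This bounds $\psi(0)$ by the observation of $\psi$ on $\omega$ plus a weighted norm of $\varphi_N$.
\item Recover $\varphi_N$ locally. Since $\varphi_N=-\psi_t-\kappa\Delta\psi$ on $\omega$, it is controlled by $\psi$ in negative Sobolev norms on a slightly smaller set, absorbed through time and space cutoffs and parabolic regularity.
\item Invoke the Stokes spectral inequality with $N-1$ components: for $\varphi\in\mathrm{span}$ of Stokes eigenfunctions with $\lambda\le\Lambda$,
\begin{equation*}
\|\varphi\|\le Ce^{C\sqrt\Lambda}\Bigl\|\bigl(\varphi_1,\dots,\varphi_{N-2},\varphi_N\bigr)\Bigr\|_{L^2(\omega)}.
\end{equation*}
This follows from \cite{CSL16}-type spectral inequalities combined with the divergence-free condition and unique continuation (one component is dropped). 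The negative-norm loss in move 2 costs only polynomial powers of $\Lambda$, because $\varphi$ is band-limited; these are absorbed into $e^{C\sqrt\Lambda}$.
\end{enumerate}
This step is the main obstacle: getting the reduced-component spectral inequality together with the local inversion of the heat equation, while keeping the cost at $e^{C\sqrt\Lambda}$.

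\emph{Stage 3: linear iteration.} By duality, the mixed estimate gives controls on $[0,\tau]$ that kill $P^S_\Lambda u(\tau)$ and all of $\theta(\tau)$, with cost $e^{C\sqrt\Lambda+C/\tau}$. On a following passive interval of length $\tau$ with $\theta\equiv0$, the velocity evolves by the free Stokes semigroup. Its remaining high-frequency part, which has Stokes frequencies above $\Lambda$, therefore decays like $e^{-\nu\Lambda\tau}$. Taking $\Lambda_k=2^{2k}$ and $\tau_k\sim T2^{-\rho k}$ with $\rho<1$ sums as usual. The result is linear null controllability with cost $Ce^{C/T}$, and the controls respect \eqref{eq:missing-controls-intro}. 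Using parabolic regularity I would also get the controlled state in $L^2(\mathbf V)\cap L^\infty(\mathbf V)$, with source terms allowed in weighted spaces.

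\emph{Stage 4: nonlinear transfer.} I would use a Liu--Takahashi--Tucsnak type argument in weighted spaces: linear null controllability with a source $f$ of decay weight $e^{-C/(T-t)}$ that beats the cost. The nonlinear terms $(y\cdot\nabla)y$ and $y\cdot\nabla\theta$ are quadratic and map the $\mathbf V\times H^1_0$-regular class into this source space, for $N\le3$. A Banach fixed point for small $\delta_T$ then closes. Equivalently, one can iterate on intervals $T_k$ with the linear estimate, keeping the cost $C e^{C/T}$ and yielding \eqref{eq:main-nonlinear-control-cost}.
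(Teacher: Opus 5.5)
Your linear architecture matches the paper's. Only the Stokes datum is band-limited and $\psi_\tau$ is arbitrary. You recover $\varphi_N$ by testing the heat adjoint against $\xi\chi^2\varphi_N$, so that all derivatives fall on the band-limited $\varphi_N$. You use the Stokes spectral inequality with component $N-1$ dropped. Your iteration cancels $\mathbf P_\Lambda u$ and the whole temperature, so $\theta\equiv0$ on the free interval.

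\textbf{The gap: the schedule does not give $e^{C/T}$.} The problem is the quantitative claim of Stage 3, which is exactly what \eqref{eq:main-nonlinear-control-cost} asserts. Your schedule is $\Lambda_k=2^{2k}$, with control and free intervals of length $\tau_k\sim T2^{-\rho k}$ and $\rho<1$. The one-step amplification factor is then $\exp\bigl(C2^k+C2^{\rho k}/T-c\,T2^{(2-\rho)k}\bigr)$. It stays above $1$ until $2^{(1-\rho)k}\gtrsim1/T$. Accumulating these factors gives a bound of order $\exp\bigl(C\,T^{-1/(1-\rho)}\bigr)$, not $\exp(C/T)$, and the nonlinear transfer only inherits this worse order. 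The classical $\rho<1$ bookkeeping yields controllability, not the optimal exponent.

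To get $e^{C/T}$, the interval lengths must be tied to the frequency level, and the first level to $T$. The paper takes $\tau_j=1/\sqrt{M_j}$, $\sigma_j=L_s/\sqrt{M_j}$ and $M_j=M_0q^j$, with $L_s>C_1/\nu$. This makes the free decay $e^{-\nu L_s\sqrt{M_j}}$ beat the step cost $e^{C_1\sqrt{M_j}}$. It then chooses $\sqrt{M_0}=K/T$, so the total time is $K/\sqrt{M_0}=T$ and the total cost is $e^{C\sqrt{M_0}}=e^{C/T}$.

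\textbf{Points in Stage 2 that also need repair.} The time cutoff in your second move must vanish at $t=\tau$, because $\psi_\tau$ is not observed. So you recover $\varphi_N$ only where $\xi=1$, while the remainder involves $\|\varphi\|_{L^2(0,\tau;\mathbf H)}$. That norm includes times near $\tau$, where the backward band-limited Stokes solution is largest. Absorbing it requires a forward-in-time comparison costing $e^{C\Lambda\delta}$, where $\delta$ is the length of the transition region of $\xi$ near $\tau$ (\Cref{lem:stokes-time-comparison}). Your constant $e^{C\sqrt\Lambda+C/\tau}$ for arbitrary $\tau$ therefore needs $\delta\lesssim1/\sqrt\Lambda$. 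The paper avoids this by taking $\tau=1/\sqrt M$, so that $e^{2\nu M\tau}$, $e^{C/\tau}$ and $(M+1/\tau)^2$ are all absorbed into $e^{C\sqrt M}$. Parabolic regularity of $\psi$ plays no role here.

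The reduced-component spectral inequality with cost $e^{C\sqrt\Lambda}$ is a genuine theorem, which the paper imports from \cite{CSFSS25}. It does not follow from \cite{CSL16} plus unique continuation, since unique continuation gives no control on how the constant grows in $\Lambda$.

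\textbf{Stage 4.} Using the source term method of \cite{LTT13} is a valid alternative to the paper's time iteration from \cite{Mar26}. It preserves the order $e^{C/T}$ up to the constant. In dimension three it gives existence directly from the fixed point, whereas the paper propagates the smallness from slice to slice. However, your weighted state space must contain $L^2(0,T;D(\mathbf A))$, not only $L^\infty(0,T;\mathbf V)$, to place $\mathbf B(y,y)$ in $L^2(0,T;\mathbf H)$.
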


\Cref{thm:main-nonlinear-controllability} establishes local null
controllability for the Boussinesq system with two missing components in the
velocity control. In dimension two, this means that no control acts directly
on the velocity equation, while in dimension three only one velocity
component is directly controlled. Moreover, the control region $\omega$ can
be any nonempty open subset of $\Omega$.

\begin{rmk}
\label{rem:stronger-main-result-2d}
In dimension two, the conclusion of
\Cref{thm:main-nonlinear-controllability} remains valid with
$\mathbf V\times H^1_0(\Omega)$ replaced by
$\mathbf H\times L^2(\Omega)$, both in the smallness condition and in the
estimate of the control cost. The corresponding controlled state is the
unique weak solution of the Boussinesq system. Thus, the regularity used in
the theorem gives a common statement for the two dimensions but is not
necessary in the two-dimensional case. This stronger result is proved in
\Cref{prop:time-iteration-2d}.
\end{rmk}

\begin{rmk}
\label{rem:any-final-time-nonlinear}
The restriction $T\in(0,T_0)$ in
\Cref{thm:main-nonlinear-controllability} is used to describe the small-time
behavior of the control cost \eqref{eq:main-nonlinear-control-cost}. The local
null-controllability result itself holds for every $T>0$. Indeed, if
$T\geq T_0$, one can fix $\tau\in(0,T_0)$ with $\tau<T$, apply the theorem on
$(0,\tau)$, and extend the controls by zero on $(\tau,T)$. Since the
corresponding solution is zero at time $\tau$, it remains identically zero on
$[\tau,T]$ by uniqueness.
\end{rmk}

One important consequence of our construction is the explicit
small-time estimate \eqref{eq:main-nonlinear-control-cost}. For both the heat
equation and the Stokes system, the null-controllability cost is known to be
bounded from above by
\begin{equation*}
C\exp\left(\frac{C}{T}\right),
\end{equation*}
and this exponential order is optimal in those settings, see, e.g.,
\cite{LR95,Mil04,TT07,CSL16}. Thus,
\eqref{eq:main-nonlinear-control-cost} recovers the expected small-time order of the control cost while retaining the reduced number of actions. To the best
of our knowledge, this quantitative estimate has not been obtained previously
for the Boussinesq system.

The existing Carleman-based arguments were not designed to optimize the
dependence of the control cost on the time horizon. Although quantitative
estimates can in principle be extracted from them, directly tracking the
constants produces a larger blow-up as $T\to0^+$. For example, the estimates
in \cite{FCGIP06} lead to a bound of the form $\exp(C/T^4)$ under some
geometric assumptions on the control region. Later works, notably
\cite{Car12}, removed these geometric restrictions, but their purpose was
qualitative rather than the optimization of the small-time cost. In the
related Stackelberg framework of \cite{TdTWZ24}, a direct tracking of the
constants produces a bound of the form $\exp(C/T^{11})$. In this sense,
estimate \eqref{eq:main-nonlinear-control-cost} is one of the main quantitative
features of the approach developed in this paper.

\subsection{Strategy of the proof}

The proof is divided into two main parts. The first one is devoted to the
linearized Boussinesq system and contains the spectral construction used in the
paper. The second one transfers the resulting quantitative controllability
estimate to the nonlinear system without changing the exponential order of the
cost.

We begin with the adjoint system associated with
\eqref{eq:linearized-boussinesq-abstract-intro}. On a time interval
$(0,\tau)$, it is given by
\begin{equation}
\label{eq:adjoint-system-intro}
\begin{cases}
-\varphi_t+\nu\mathbf A\varphi=0
& t\in(0,\tau),\\
-\psi_t+\kappa A\psi=\varphi_N
& t\in(0,\tau),\\
\varphi(\tau)=\varphi_\tau,\qquad \psi(\tau)=\psi_\tau.
\end{cases}
\end{equation}
Here $\varphi_N$ denotes the last component of the Stokes variable
$\varphi$. The reduced number of controls in
\eqref{eq:linearized-boussinesq-intro} is reflected in the adjoint system
through the observation of
\begin{equation*}
\varphi_1,\ldots,\varphi_{N-2},
\qquad
\psi,
\end{equation*}
on $(0,\tau)\times\omega$. Thus, two components of the Stokes variable are not
directly observed.

The second equation in \eqref{eq:adjoint-system-intro} contains
$\varphi_N$ as a source term. A suitable local estimate for this equation
allows us to estimate $\varphi_N$ in terms of the observation of $\psi$. In
dimension two, this gives an estimate for $\varphi_2$ and leaves
$\varphi_1$ unobserved, since there is no direct observation of the Stokes
variable. In dimension three, $\varphi_1$ is directly observed and the scalar
equation gives an estimate for $\varphi_3$, so only $\varphi_2$ remains
unobserved. Thus, in both cases, we are left with exactly one unobserved Stokes
component, namely $\varphi_{N-1}$. We can then apply the spectral inequality
for the Stokes operator with one missing component proved in
\cite{CSFSS25}. Combining this inequality with the local estimate for
$\varphi_N$ gives an estimate for the full Stokes variable in terms of the
available observations.

We next combine this Stokes estimate with an observability estimate for a
nonhomogeneous heat equation in order to estimate the scalar component. This
leads to a mixed short-time observability inequality for
\eqref{eq:adjoint-system-intro}. More precisely, only the terminal datum
$\varphi_\tau$ is assumed to belong to a low-frequency Stokes space, whereas
$\psi_\tau$ is arbitrary in $L^2(\Omega)$. For a Stokes frequency cutoff $M$,
we take $\tau=\frac{1}{\sqrt M}$ and obtain an observability constant bounded
by $C\exp(C\sqrt M)$.

By duality, the mixed observability inequality provides controls for the
forward system \eqref{eq:linearized-boussinesq-abstract-intro} on
$(0,\tau)$ with two properties: the low modes of the velocity are cancelled,
and the entire temperature component is driven to zero. We then set the
controls equal to zero on a second time interval. Since the temperature is
zero at the beginning of this interval, it remains zero during the free
evolution. The buoyancy term in the velocity equation therefore disappears,
and the remaining high Stokes modes decay under the action of the Stokes
semigroup. These two stages form the main building block of our modified
Lebeau--Robbiano iteration.

We repeat these two steps for an increasing sequence of Stokes frequency
levels and a corresponding sequence of shrinking time intervals. At the end of
each control interval, the temperature is again zero and the low Stokes modes
have been cancelled. During the subsequent free evolution, the temperature
remains zero, so the coupling cannot generate new Stokes frequencies. The
iteration can therefore be carried out using only Stokes frequency cutoffs,
without introducing a spectral localization for the heat component or
comparing the two spectral projections. The resulting construction gives null
controllability for the linearized system with cost
$C\exp\left(\frac{C}{T}\right)$.

The second part of the proof transfers this linear estimate to the nonlinear
Boussinesq system. We use a time-iteration argument inspired by the
construction in \cite{Mar26}. This argument can be viewed as a reinterpretation
of the source term method introduced in \cite{LTT13}. On each time interval,
we choose a control that drives the corresponding linearized solution to zero
and apply the same control to the nonlinear system. The state obtained at the
end of the interval is therefore given by the difference between the nonlinear
and linear flows. Energy and interpolation estimates show that this difference
is quadratic with respect to the size of the initial state and the control.

We repeat this construction on a sequence of time intervals whose lengths add
up to $T$. At every step, the quadratic estimate gives a smaller state to which
the next linear control is applied. For sufficiently small initial data, the
resulting sequence converges to zero, and the nonlinear solution reaches zero
at time $T$. The estimates used in the iteration preserve both the reduced
number of controls and the order $\exp(C/T)$ of the linear control cost.

\subsection{Outline}

The paper is organized as follows. In \Cref{sec:functional-setting}, we
introduce the functional framework, the notation for the Stokes and heat
operators, and the well-posedness results used throughout the paper. In
\Cref{sec:spectral}, we prove the spectral estimates and the mixed short-time
observability inequality for the adjoint Stokes--heat cascade. In
\Cref{sec:linear-control}, we develop the modified Lebeau--Robbiano iteration
and obtain the quantitative null-controllability estimate for the linearized
system. Finally, in \Cref{sec:nonlinear}, we combine this quantitative linear
estimate with stability estimates and a time-iteration argument to prove the
local null controllability of the nonlinear Boussinesq system.

\section{Functional setting, notation, and well-posedness results}
\label{sec:functional-setting}

In this section we introduce the functional framework used throughout the
paper. We fix the notation for the spaces and operators associated with the
Boussinesq system, and state some well-posedness results for the linearized and
nonlinear equations that will be used in what follows.

\subsection{Spaces and operators}

We write $\mathbb L^2(\Omega):=L^2(\Omega)^N$,
$\mathbb H^1_0(\Omega):=H^1_0(\Omega)^N$, and
$\mathbb H^2(\Omega):=H^2(\Omega)^N$. The space $\mathbb L^2(\Omega)$ is
endowed with the usual norm
\begin{equation*}
\|u\|_{\mathbb L^2(\Omega)}
=
\left(
\sum_{i=1}^N\|u_i\|_{L^2(\Omega)}^2
\right)^{1/2},
\qquad
u=(u_1,\ldots,u_N).
\end{equation*}
Analogous product norms are used in $\mathbb H^1_0(\Omega)$ and
$\mathbb H^2(\Omega)$. When the domain is $\Omega$ and no confusion is
possible, we simply write $\|\cdot\|_{L^2}$ for both the norms in
$L^2(\Omega)$ and in $\mathbb L^2(\Omega)$. Norms over other subsets, such as
$\omega$ or $\omega_0$, will always be indicated explicitly.

Let
\begin{equation*}
\mathcal V
:=
\left\{
v\in C_c^\infty(\Omega)^N:\ \nabla\cdot v=0
\right\}.
\end{equation*}
We denote by $\mathbf H$ and $\mathbf V$ the closures of $\mathcal V$ in
$\mathbb L^2(\Omega)$ and $\mathbb H^1_0(\Omega)$, respectively. More
precisely,
\begin{equation*}
\mathbf H
=
\left\{
v\in \mathbb L^2(\Omega):
\ \nabla\cdot v=0\ \text{in }\Omega,
\quad
v\cdot n=0\ \text{on }\partial\Omega
\right\},
\end{equation*}
and
\begin{equation*}
\mathbf V=\mathbb H^1_0(\Omega)\cap\mathbf H.
\end{equation*}
The space $\mathbf H$ is endowed with the scalar product inherited from
$\mathbb L^2(\Omega)$, denoted by $(\cdot,\cdot)_{\mathbf H}$. The space
$\mathbf V$ is endowed with the scalar product
\begin{equation*}
(u,v)_{\mathbf V}
:=
\int_\Omega \nabla u:\nabla v\dx,
\qquad
u,v\in\mathbf V,
\end{equation*}
and the associated norm is denoted by $\|\cdot\|_{\mathbf V}$. By Poincaré's
inequality, this norm is equivalent to the usual norm induced by
$\mathbb H^1_0(\Omega)$. We denote by
$\Pi:\mathbb L^2(\Omega)\to\mathbf H$ the Leray projector.

In what follows, we use two different elliptic operators. The scalar
Dirichlet Laplacian is denoted by
\begin{equation*}
A=-\Delta,
\qquad
D(A)=H^2(\Omega)\cap H^1_0(\Omega),
\end{equation*}
as an operator on $L^2(\Omega)$. The Stokes operator is denoted by
\begin{equation*}
\mathbf A=-\Pi\Delta,
\qquad
D(\mathbf A)=\mathbb H^2(\Omega)\cap\mathbf V,
\end{equation*}
as an operator on $\mathbf H$. Both $A$ and $\mathbf A$ are self-adjoint,
positive operators with compact inverse in their corresponding spaces.

Hence, for the scalar operator $A$, there exist a nondecreasing sequence of
positive eigenvalues $(\lambda_j)_{j\in\mathbb N^*}$ and an orthonormal basis
$(\rho_j)_{j\in\mathbb N^*}$ of $L^2(\Omega)$ such that
\begin{equation}
\label{eq:laplacian-eigenproblem}
\begin{cases}
-\Delta\rho_j=\lambda_j\rho_j
& \text{in }\Omega,\\
\rho_j=0
& \text{on }\partial\Omega,
\end{cases}
\qquad
j\in\mathbb N^*,
\end{equation}
with
\begin{equation*}
0<\lambda_1\leq\lambda_2\leq\cdots,
\qquad
\lambda_j\to+\infty.
\end{equation*}
Similarly, for the Stokes operator $\mathbf A$, there exist a nondecreasing
sequence of positive eigenvalues $(\mu_j)_{j\in\mathbb N^*}$ and an
orthonormal basis $(e_j)_{j\in\mathbb N^*}$ of $\mathbf H$ such that
\begin{equation}
\label{eq:stokes-eigenproblem-abstract}
\mathbf A e_j=\mu_j e_j,
\qquad
j\in\mathbb N^*,
\end{equation}
with
\begin{equation*}
0<\mu_1\leq\mu_2\leq\cdots,
\qquad
\mu_j\to+\infty.
\end{equation*}
In other words, each $e_j$ is associated with a pressure $p_j$ and satisfies
\begin{equation*}
\begin{cases}
-\Delta e_j+\nabla p_j=\mu_j e_j
& \text{in }\Omega,\\
\nabla\cdot e_j=0
& \text{in }\Omega,\\
e_j=0
& \text{on }\partial\Omega.
\end{cases}
\end{equation*}

We also introduce the product spaces
\begin{equation*}
\mathbb X^0:=\mathbf H\times L^2(\Omega),
\qquad
\mathbb X^1:=\mathbf V\times H^1_0(\Omega),
\qquad
\mathbb X^2:=D(\mathbf A)\times D(A).
\end{equation*}
They are endowed with their natural norms, namely
\begin{equation*}
\|(u,\theta)\|_{\mathbb X^0}^2
:=
\|u\|_{\mathbf H}^2+\|\theta\|_{L^2}^2,
\qquad
\|(u,\theta)\|_{\mathbb X^1}^2
:=
\|u\|_{\mathbf V}^2+\|\theta\|_{H^1_0}^2,
\end{equation*}
and
\begin{equation*}
\|(u,\theta)\|_{\mathbb X^2}^2
:=
\|\mathbf A u\|_{\mathbf H}^2+\|A\theta\|_{L^2}^2.
\end{equation*}
Note that, by standard elliptic regularity, the norm
$\|\mathbf A\cdot\|_{\mathbf H}$ is equivalent to the
$\mathbb H^2(\Omega)$-norm on $D(\mathbf A)$, and
$\|A\cdot\|_{L^2}$ is equivalent to the $H^2(\Omega)$-norm on $D(A)$.

\subsection{The linearized Stokes--heat cascade}

We next introduce the linearized system that will be used in the
controllability argument. Given source terms $F$ and $f$, we consider
\begin{equation}
\label{eq:linear-abstract}
\begin{cases}
u_t+\nu\mathbf A u
=
\Pi(\theta e_N)+F
& t\in(0,T),\\
\theta_t+\kappa A\theta
=
f
& t\in(0,T),\\
(u(0),\theta(0))=(u_0,\theta_0).
\end{cases}
\end{equation}
We shall use the following well-posedness result.

\begin{theo}[Well-posedness of the linearized system]
\label{thm:linear-wellposedness}
Let $T\in(0,1)$, $F\in L^2(0,T;\mathbf H)$, and
$f\in L^2(0,T;L^2(\Omega))$.

\begin{enumerate}
\item[(i)] If $(u_0,\theta_0)\in\mathbb X^0$, then
\eqref{eq:linear-abstract} admits a unique solution satisfying
\begin{equation*}
(u,\theta)
\in
C([0,T];\mathbb X^0)\cap L^2(0,T;\mathbb X^1).
\end{equation*}
Moreover, there exists a constant $C>0$, depending only on $\Omega$, $\nu$,
and $\kappa$, such that
\begin{equation*}
\begin{aligned}
&\|(u,\theta)\|_{C([0,T];\mathbb X^0)}^2
+
\|(u,\theta)\|_{L^2(0,T;\mathbb X^1)}^2
\\
&\qquad
\leq
C\left(
\|(u_0,\theta_0)\|_{\mathbb X^0}^2
+
\|F\|_{L^2(0,T;\mathbf H)}^2
+
\|f\|_{L^2(0,T;L^2)}^2
\right).
\end{aligned}
\end{equation*}

\item[(ii)] If $(u_0,\theta_0)\in\mathbb X^1$, then
\eqref{eq:linear-abstract} admits a unique solution satisfying
\begin{equation*}
(u,\theta)
\in
C([0,T];\mathbb X^1)\cap L^2(0,T;\mathbb X^2).
\end{equation*}
Moreover, there exists a constant $C>0$, depending only on $\Omega$, $\nu$,
and $\kappa$, such that
\begin{equation*}
\begin{aligned}
&\|(u,\theta)\|_{C([0,T];\mathbb X^1)}^2
+
\|(u,\theta)\|_{L^2(0,T;\mathbb X^2)}^2
\\
&\qquad
\leq
C\left(
\|(u_0,\theta_0)\|_{\mathbb X^1}^2
+
\|F\|_{L^2(0,T;\mathbf H)}^2
+
\|f\|_{L^2(0,T;L^2)}^2
\right).
\end{aligned}
\end{equation*}
\end{enumerate}
\end{theo}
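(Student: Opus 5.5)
The cascade structure lets us decouple the system: first solve the heat equation for $\theta$, then treat $\Pi(\theta e_N)$ as an additional source in the Stokes equation. For the scalar equation $\theta_t+\kappa A\theta=f$, $\theta(0)=\theta_0$, we invoke the standard theory of maximal regularity for the self-adjoint positive operator $A$ on $L^2(\Omega)$. Concretely, we expand in the eigenbasis $(\rho_j)$ of \eqref{eq:laplacian-eigenproblem} and write
\[
\theta(t)=\sum_j\Bigl(e^{-\kappa\lambda_j t}(\theta_0,\rho_j)+\int_0^t e^{-\kappa\lambda_j(t-s)}(f(s),\rho_j)\,\d s\Bigr)\rho_j .
\]
This gives existence and uniqueness, since any solution has these coefficients by testing against $\rho_j$. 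Identically, for $u_t+\nu\mathbf A u=G$ we expand in the Stokes basis $(e_j)$ of \eqref{eq:stokes-eigenproblem-abstract}.

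For part (i), we derive the energy estimates directly. Testing the heat equation with $\theta$ and integrating over $(0,t)$ gives
\[
\tfrac12\|\theta(t)\|_{L^2}^2+\kappa\int_0^t\|\nabla\theta\|_{L^2}^2\leq \tfrac12\|\theta_0\|_{L^2}^2+\int_0^t\|f\|_{L^2}\|\theta\|_{L^2}.
\]
Young's inequality then yields a bound for $\|\theta\|_{C([0,T];L^2)}^2+\|\theta\|_{L^2(0,T;H^1_0)}^2$ by $\|\theta_0\|^2+\|f\|_{L^2L^2}^2$. Gronwall's lemma introduces a factor $e^{T}\leq e$, which is harmless because $T<1$; this restriction is precisely what keeps the constant independent of $T$. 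We then test the Stokes equation with $u$, using $(\Pi g,u)=(g,u)$ for $u\in\mathbf H$, to get the same bound with source $F+\Pi(\theta e_N)$. Since $\|\Pi(\theta e_N)\|_{\mathbf H}\leq\|\theta\|_{L^2}$, and $\|\theta\|_{L^2(0,T;L^2)}^2\leq T\|\theta\|_{C([0,T];L^2)}^2$ is already controlled, we can sum the two estimates. Continuity in time with values in $\mathbb X^0$ follows from the standard Lions–Magenes lemma, because $\theta_t\in L^2(0,T;H^{-1})$ and $u_t\in L^2(0,T;\mathbf V')$; it can also be read off from the convergence of the spectral series.

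For part (ii), we test the heat equation with $A\theta$ and the Stokes equation with $\mathbf A u$. This gives
\[
\tfrac12\tfrac{\d}{\dt}\|\nabla\theta\|^2+\kappa\|A\theta\|^2\leq\|f\|\,\|A\theta\|,
\]
and analogously for $u$, using $\|\mathbf A^{1/2}u\|=\|u\|_{\mathbf V}$ and $(\Pi g,\mathbf A u)=(g,\mathbf Au)$. Absorbing $\|A\theta\|$ and $\|\mathbf A u\|$ through Young's inequality, with the source $\Pi(\theta e_N)$ bounded in $L^2(0,T;\mathbf H)$ by part (i), yields the $C([0,T];\mathbb X^1)\cap L^2(0,T;\mathbb X^2)$ bound. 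Continuity in $\mathbb X^1$ follows from interpolation between $L^2(0,T;\mathbb X^2)$ and $H^1(0,T;\mathbb X^0)$, which holds because $u_t$ and $\theta_t$ lie in $L^2$ with values in $\mathbb X^0$ by the equations themselves.

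The main obstacle is mostly bookkeeping: the constants must depend only on $\Omega,\nu,\kappa$, which is ensured by $T<1$. The energy identities also need rigorous justification, which we obtain by performing the computations on the Galerkin or spectral truncations and passing to the limit.
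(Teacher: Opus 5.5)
Your proposal is correct and follows essentially the same route as the paper, which only sketches this proof. The paper's sketch uses a Galerkin/spectral construction in the Dirichlet and Stokes eigenbases, the usual energy estimates (testing with $\theta$ and $u$, then with $A\theta$ and $\mathbf A u$), and the observation that the Gronwall-type factors $C_1e^{C_2T}$ stay uniformly bounded because $T<1$. Solving the heat equation first and then treating $\Pi(\theta e_N)$ as a known source in the Stokes equation, as you do, is a convenient way to exploit the cascade structure within that same scheme.
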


The proof is standard. It follows from a Galerkin approximation based on the
Stokes eigenfunctions in \eqref{eq:stokes-eigenproblem-abstract} and the
Dirichlet Laplacian eigenfunctions in \eqref{eq:laplacian-eigenproblem},
together with usual energy estimates for the Stokes and heat equations.
At some point in the proof, the constants obtained after applying Gronwall's
inequality are of the form $C_1e^{C_2T}$, with $C_1,C_2>0$ depending only on
$\Omega$, $\nu$, and $\kappa$. Since $T\in(0,1)$, these constants can be
bounded uniformly with respect to $T$. For brevity, we omit the details.

\subsection{The nonlinear Boussinesq system}

We now turn to the well-posedness of the nonlinear Boussinesq system. We use
the usual notation
\begin{equation*}
\mathbf B(u,v):=\Pi((u\cdot\nabla)v)
\end{equation*}
for the convective term. With this, we consider
\begin{equation}
\label{eq:nonlinear-boussinesq-strong}
\begin{cases}
u_t+\nu\mathbf A u+\mathbf B(u,u)
=
\Pi(\theta e_N)+F
& t\in(0,T),\\
\theta_t+\kappa A\theta+u\cdot\nabla\theta
=
f
& t\in(0,T),\\
(u(0),\theta(0))=(u_0,\theta_0).
\end{cases}
\end{equation}

In the remainder of this section, we first consider the three-dimensional
case, where a smallness condition is needed to obtain a strong solution
uniformly for $T\in(0,1)$. This is also the case that will require more details
in the nonlinear controllability argument. The two-dimensional case is
globally well posed and is briefly discussed in
\Cref{rem:nonlinear-wellposedness-2d}.

At the $L^2$ level, the usual cancellations are available. More precisely, if
$u\in\mathbf V$ and $\theta\in H^1_0(\Omega)$, then
\begin{equation}\label{eq:cancelations_2d}
\langle\mathbf B(u,u),u\rangle_{\mathbf V',\mathbf V}=0,
\qquad
\int_\Omega (u\cdot\nabla\theta)\theta\dx=0.
\end{equation}
However, in the three-dimensional case below, the nonlinear terms will be
controlled by interpolation and elliptic regularity. We shall use the
following elementary estimates.

\begin{lem}
\label{lem:basic-estimates-3d}
Assume that $N=3$. There exists a constant $C>0$, depending only on $\Omega$,
such that the following estimates hold.

\begin{enumerate}
\item[(i)] $\|u\|_{L^6}\leq C\|u\|_{\mathbf V}$, for every $u\in\mathbf V$.

\item[(ii)] $\|\nabla u\|_{L^3}\leq C\|u\|_{\mathbf V}^{1/2}\|\mathbf A u\|_{\mathbf H}^{1/2}$, for every $u\in D(\mathbf A)$.

\item[(iii)] $\|\nabla\theta\|_{L^3}\leq C\|\theta\|_{H^1_0}^{1/2}\|A\theta\|_{L^2}^{1/2}$, for every $\theta\in D(A)$.
\end{enumerate}
\end{lem}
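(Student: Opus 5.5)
The three estimates follow from Sobolev embeddings in dimension three, the Gagliardo--Nirenberg inequality, and elliptic regularity for the Stokes operator and the Dirichlet Laplacian. In each case we interpolate between $H^1$ and $H^2$, and then replace the $H^1$ and $H^2$ norms by the operator norms introduced above.

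\textbf{Estimate (i).} This is the Sobolev embedding $H^1_0(\Omega)\hookrightarrow L^6(\Omega)$, valid for $N=3$ and applied componentwise. It gives $\|u\|_{L^6}\le C\|u\|_{\mathbb H^1_0}$. By Poincar\'e's inequality, $\|u\|_{\mathbb H^1_0}\le C\|\nabla u\|_{L^2}=C\|u\|_{\mathbf V}$ on $\mathbf V$.

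\textbf{Estimates (ii) and (iii).} We use the interpolation inequality
\begin{equation*}
\|w\|_{L^3}\le \|w\|_{L^2}^{1/2}\|w\|_{L^6}^{1/2},
\end{equation*}
which is H\"older's inequality with $\frac13=\frac{1/2}{2}+\frac{1/2}{6}$. We apply it to $w=\nabla u$, or $w=\nabla\theta$ in the scalar case. Since $\Omega$ is bounded and smooth, the embedding $H^1(\Omega)\hookrightarrow L^6(\Omega)$ gives
\begin{equation*}
\|\nabla u\|_{L^6}\le C\|\nabla u\|_{H^1}\le C\|u\|_{\mathbb H^2}.
\end{equation*}
Note that $\nabla u$ does not vanish on $\partial\Omega$, so here we need the full $H^1$ embedding rather than the $H^1_0$ one.

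Next we invoke elliptic regularity. For the Stokes problem (Cattabriga's estimate), $\|u\|_{\mathbb H^2}\le C\|\mathbf A u\|_{\mathbf H}$ for $u\in D(\mathbf A)$; this is the norm equivalence recalled after the definition of $\mathbb X^2$. For the Dirichlet Laplacian, $\|\theta\|_{H^2}\le C\|A\theta\|_{L^2}$ for $\theta\in D(A)$. Finally, $\|\nabla u\|_{L^2}=\|u\|_{\mathbf V}$, and $\|\nabla\theta\|_{L^2}$ is the $H^1_0$ norm, up to an equivalent norm given by Poincar\'e's inequality. Combining these gives
\begin{equation*}
\|\nabla u\|_{L^3}\le C\|u\|_{\mathbf V}^{1/2}\|\mathbf A u\|_{\mathbf H}^{1/2},
\end{equation*}
and the same argument gives (iii).

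\textbf{Main point.} There is no real obstacle. The only nontrivial ingredient is the $H^2$ regularity of the Stokes operator on smooth bounded domains, which we take as standard; it is already used implicitly in the definition of the norm on $\mathbb X^2$. All constants depend only on $\Omega$, through the Sobolev, Poincar\'e and elliptic regularity constants.
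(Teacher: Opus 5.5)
Your proof is correct and follows the same route as the paper: the Sobolev embedding plus Poincar\'e for (i), and for (ii)--(iii) H\"older interpolation of the gradient between $L^2$ and $L^6$, combined with the embedding $H^1(\Omega)\hookrightarrow L^6(\Omega)$ and $H^2$ elliptic regularity for the Stokes operator and the Dirichlet Laplacian. Your remark that $\nabla u$ requires the full $H^1$ embedding rather than the $H^1_0$ one is a correct detail that the paper leaves implicit.
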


\begin{proof}
Item \textup{(i)} follows from the Sobolev embedding $H^1_0(\Omega)\hookrightarrow L^6(\Omega)$. For item \textup{(ii)}, we interpolate $\nabla u$ between $L^2(\Omega)$ and $L^6(\Omega)$ and use the elliptic regularity estimate $\|u\|_{H^2}\leq C\|\mathbf A u\|_{\mathbf H}$ for the Stokes operator. The proof of item \textup{(iii)} is analogous, using the elliptic regularity estimate $\|\theta\|_{H^2}\leq C\|A\theta\|_{L^2}$ for the Dirichlet Laplacian.
\end{proof}

We can now state a small-data well-posedness result with constants uniform with respect to $T\in(0,1)$.

\begin{theo}
\label{thm:small-data-strong-wellposedness-3d}
Assume that $N=3$. There exist constants $r_\ast>0$ and $C>0$, depending only
on $\Omega$, $\nu$, and $\kappa$, with the following property: for every
$T\in(0,1)$, every $(u_0,\theta_0)\in\mathbb X^1$,
$F\in L^2(0,T;\mathbf H)$, and $f\in L^2(0,T;L^2(\Omega))$ satisfying
\begin{equation}
\label{eq:smallness-strong-wp-3d}
\|(u_0,\theta_0)\|_{\mathbb X^1}
+
\|F\|_{L^2(0,T;\mathbf H)}
+
\|f\|_{L^2(0,T;L^2)}
\leq r_\ast,
\end{equation}
system \eqref{eq:nonlinear-boussinesq-strong} has a unique solution satisfying
\begin{equation*}
(u,\theta)
\in
C([0,T];\mathbb X^1)\cap L^2(0,T;\mathbb X^2).
\end{equation*}
Moreover,
\begin{equation}
\label{eq:strong-solution-bound-3d}
\begin{aligned}
&\|(u,\theta)\|_{C([0,T];\mathbb X^1)}^2
+
\|(u,\theta)\|_{L^2(0,T;\mathbb X^2)}^2
\\
&\qquad
\leq
C\left(
\|(u_0,\theta_0)\|_{\mathbb X^1}^2
+
\|F\|_{L^2(0,T;\mathbf H)}^2
+
\|f\|_{L^2(0,T;L^2)}^2
\right).
\end{aligned}
\end{equation}
\end{theo}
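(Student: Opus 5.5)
We prove \Cref{thm:small-data-strong-wellposedness-3d} by a fixed-point argument around the linear theory of \Cref{thm:linear-wellposedness}(ii). We work in the space
\begin{equation*}
\mathcal E_T:=C([0,T];\mathbb X^1)\cap L^2(0,T;\mathbb X^2),
\qquad
\|(u,\theta)\|_{\mathcal E_T}^2:=\|(u,\theta)\|_{C([0,T];\mathbb X^1)}^2+\|(u,\theta)\|_{L^2(0,T;\mathbb X^2)}^2 .
\end{equation*}
Given $(\bar u,\bar\theta)\in\mathcal E_T$, let $\Phi(\bar u,\bar\theta)=(u,\theta)$ be the solution of \eqref{eq:linear-abstract} with sources $F-\mathbf B(\bar u,\bar u)$ and $f-\bar u\cdot\nabla\bar\theta$ and data $(u_0,\theta_0)$. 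By \Cref{thm:linear-wellposedness}(ii), whose constant $C_0$ is uniform for $T\in(0,1)$, we get
\begin{equation*}
\|\Phi(\bar u,\bar\theta)\|_{\mathcal E_T}^2\leq C_0\bigl(\|(u_0,\theta_0)\|_{\mathbb X^1}^2+\|F\|_{L^2(0,T;\mathbf H)}^2+\|f\|_{L^2(0,T;L^2)}^2+\|\mathbf B(\bar u,\bar u)\|_{L^2(0,T;\mathbf H)}^2+\|\bar u\cdot\nabla\bar\theta\|_{L^2(0,T;L^2)}^2\bigr).
\end{equation*}
The whole proof therefore reduces to a bilinear estimate for the nonlinear terms in $L^2(0,T;L^2)$, with a constant independent of $T$.

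The key step is this bilinear bound. Since $\Pi$ is bounded on $\mathbb L^2(\Omega)$, Hölder's inequality with exponents $(6,3)$ and \Cref{lem:basic-estimates-3d}(i)--(iii) give, for a.e.\ $t$,
\begin{equation*}
\|(u\cdot\nabla)v\|_{L^2}\leq\|u\|_{L^6}\|\nabla v\|_{L^3}\leq C\|u\|_{\mathbf V}\|v\|_{\mathbf V}^{1/2}\|\mathbf Av\|_{\mathbf H}^{1/2},
\end{equation*}
and similarly
\begin{equation*}
\|u\cdot\nabla\theta\|_{L^2}\leq C\|u\|_{\mathbf V}\|\theta\|_{H^1_0}^{1/2}\|A\theta\|_{L^2}^{1/2}.
\end{equation*}
We square and integrate in time, bounding $\|u\|_{\mathbf V}$ and $\|v\|_{\mathbf V}$ by their sup norms. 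Then Cauchy--Schwarz gives $\int_0^T\|\mathbf Av\|_{\mathbf H}\,\dt\leq T^{1/2}\|\mathbf Av\|_{L^2(0,T;\mathbf H)}\leq\|\mathbf Av\|_{L^2(0,T;\mathbf H)}$ for $T<1$. This is the only place where $T$ enters, and only favourably. Altogether,
\begin{equation*}
\|\mathbf B(u,v)\|_{L^2(0,T;\mathbf H)}+\|u\cdot\nabla\theta\|_{L^2(0,T;L^2)}\leq C_1\|(u,\theta)\|_{\mathcal E_T}\|(v,\theta)\|_{\mathcal E_T}
\end{equation*}
in the appropriate bilinear sense, with $C_1$ independent of $T\in(0,1)$. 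The same computation applied to differences, using $\mathbf B(u,u)-\mathbf B(w,w)=\mathbf B(u-w,u)+\mathbf B(w,u-w)$ and the analogous splitting for the transport term, gives
\begin{equation*}
\|\Phi(U)-\Phi(W)\|_{\mathcal E_T}\leq C_2\bigl(\|U\|_{\mathcal E_T}+\|W\|_{\mathcal E_T}\bigr)\|U-W\|_{\mathcal E_T}.
\end{equation*}
Note that $\bar u\in\mathbf V$ ensures that $\bar u\cdot\nabla\bar\theta$ is a legitimate $L^2$ source; divergence-freeness of $\bar u$ is not even needed at this level.

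Writing $\varepsilon$ for the left-hand side of \eqref{eq:smallness-strong-wp-3d}, the two estimates above give
\begin{equation*}
\|\Phi(U)\|_{\mathcal E_T}\leq \sqrt{C_0}\,\varepsilon+C_2\|U\|_{\mathcal E_T}^2 .
\end{equation*}
Choose the radius $R=2\sqrt{C_0}\,\varepsilon$ and require $r_\ast$ so small that $4C_2R\leq1$, which holds once $8C_2\sqrt{C_0}\,r_\ast\leq1$. Then $\Phi$ maps the closed ball of radius $R$ in $\mathcal E_T$ into itself and is a contraction with constant at most $1/2$. Banach's fixed point theorem yields a solution with $\|(u,\theta)\|_{\mathcal E_T}\leq 2\sqrt{C_0}\,\varepsilon$, which is \eqref{eq:strong-solution-bound-3d} with $C=4C_0$, since $\varepsilon^2$ is bounded by three times the sum of the squared norms. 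Since $C_0$, $C_1$ and $C_2$ depend only on $\Omega$, $\nu$ and $\kappa$, so do $r_\ast$ and $C$.

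Uniqueness in the whole class $\mathcal E_T$, not only in the ball, follows from a standard energy argument for the difference of two solutions. Alternatively, for any two solutions one can apply the difference estimate on a short interval $[0,t_0]$ where both $\mathcal E_{t_0}$ norms are small, which is possible by continuity of $t\mapsto\|\cdot\|_{\mathcal E_t}$, and then continue step by step to cover $[0,T]$.

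The main obstacle is keeping every constant uniform in $T$. This is guaranteed because the linear constant of \Cref{thm:linear-wellposedness} is uniform for $T<1$ and the nonlinear estimate only produces powers $T^{1/2}\leq1$. The delicate point is the use of \Cref{lem:basic-estimates-3d}(ii)--(iii), which requires the $L^2(0,T;\mathbb X^2)$ part of the norm and explains why the argument is carried out at the strong level $\mathbb X^1$.
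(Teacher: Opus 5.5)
Your proof is correct, but it takes a different route from the paper's. The paper uses no fixed point. It tests the velocity equation with $\mathbf A u$ and the temperature equation with $A\theta$, which gives $\mathcal Y'+c_0\mathcal Z\leq C\mathcal Y+C\mathcal Y^3+C\|F\|_{\mathbf H}^2+C\|f\|_{L^2}^2$ for $\mathcal Y=\|u\|_{\mathbf V}^2+\|\theta\|_{H^1_0}^2$. A continuity (bootstrap) argument then keeps $\mathcal Y\leq1$, so the cubic term is absorbed, and Gronwall with $T<1$ gives the uniform bound. Existence (by Galerkin) and uniqueness are only cited as standard. You instead treat the nonlinearities as sources in \Cref{thm:linear-wellposedness}(ii) and close a contraction in $C([0,T];\mathbb X^1)\cap L^2(0,T;\mathbb X^2)$. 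For this you use the $L^2(0,T;L^2)$ bilinear bounds from \Cref{lem:basic-estimates-3d}, which are exactly the estimates the paper itself uses later in the proof of \Cref{lem:quadratic-perturbation-3d}. Your route gives existence, uniqueness in a ball, Lipschitz dependence on the data and the estimate all at once, with no Galerkin scheme. Uniformity in $T$ comes from the linear constant and the harmless factor $T^{1/4}\leq1$. The paper's route gives the bound directly for every solution in the class and exploits the energy structure, but leaves the construction of the solution to an omitted approximation argument.

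Two small points. First, if $\varepsilon$ is the sum of the three norms, your final constant is $12C_0$, not $4C_0$. To get $4C_0$, run the argument with $\bigl(\|(u_0,\theta_0)\|_{\mathbb X^1}^2+\|F\|_{L^2(0,T;\mathbf H)}^2+\|f\|_{L^2(0,T;L^2)}^2\bigr)^{1/2}\leq\varepsilon$. Second, in your alternative uniqueness argument, the norm on $[t_1,t_1+\delta]$ tends to $\|U(t_1)\|_{\mathbb X^1}$, not to zero. So smallness at later restart times must come from one of two sources: comparing with the constructed solution, whose values stay below $R$, or keeping the factor $\delta^{1/4}$ in the bilinear estimate.
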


The proof of \Cref{thm:small-data-strong-wellposedness-3d} follows the
classical strategy used to obtain local strong solutions of the
three-dimensional Navier--Stokes equations; see, e.g.,
\cite{CF88,Tem95}. For \eqref{eq:nonlinear-boussinesq-strong}, the argument
requires some adaptations to handle the buoyancy term and the transport term
in the scalar equation. The existence part can be obtained by a Galerkin
approximation, first constructing approximate solutions and then using the
corresponding uniform estimates to pass to the limit. Once these estimates
are available, a standard argument gives uniqueness. For brevity, we omit
this part.

Therefore, the proof below only focuses on obtaining estimate
\eqref{eq:strong-solution-bound-3d}. The main point is to derive it in a form
that yields constants $C$ and $r_\ast$ independent of $T\in(0,1)$.

\begin{proof}[Proof of \Cref{thm:small-data-strong-wellposedness-3d}]
We multiply the velocity equation in
\eqref{eq:nonlinear-boussinesq-strong} by $\mathbf A u$ in $\mathbf H$. Then,
by integrating by parts,
\begin{equation}
\label{eq:strong-u-energy-boussinesq}
\frac12\frac{\d}{\dt}\|u(t)\|_{\mathbf V}^2
+
\nu\|\mathbf A u(t)\|_{\mathbf H}^2
=
-\bigl(\mathbf B(u,u),\mathbf A u\bigr)_{\mathbf H}
+
\bigl(\Pi(\theta e_3),\mathbf A u\bigr)_{\mathbf H}
+
(F,\mathbf A u)_{\mathbf H}.
\end{equation}
Let us estimate the three terms on the right-hand side of
\eqref{eq:strong-u-energy-boussinesq}. Using items \textup{(i)} and
\textup{(ii)} of \Cref{lem:basic-estimates-3d} together with Young's
inequality, we bound the first one as
\begin{equation*}
\begin{aligned}
\left|
\bigl(\mathbf B(u,u),\mathbf A u\bigr)_{\mathbf H}
\right|
&\leq
C\|u\|_{L^6}
\|\nabla u\|_{L^3}
\|\mathbf A u\|_{\mathbf H}
\\
&\leq
C\|u\|_{\mathbf V}^{3/2}
\|\mathbf A u\|_{\mathbf H}^{3/2}
\\
&\leq
\frac{\nu}{4}\|\mathbf A u\|_{\mathbf H}^2
+
C\|u\|_{\mathbf V}^6.
\end{aligned}
\end{equation*}
For the second and third terms, using the Cauchy--Schwarz and Young
inequalities, we have
\begin{equation*}
\left|
\bigl(\Pi(\theta e_3),\mathbf A u\bigr)_{\mathbf H}
\right|
+
\left|
(F,\mathbf A u)_{\mathbf H}
\right|
\leq
\frac{\nu}{4}\|\mathbf A u\|_{\mathbf H}^2
+
C\|\theta\|_{L^2}^2
+
C\|F\|_{\mathbf H}^2.
\end{equation*}
Therefore, from \eqref{eq:strong-u-energy-boussinesq}, we deduce
\begin{equation}
\label{eq:u-strong-diff-ineq}
\frac{\d}{\dt}\|u(t)\|_{\mathbf V}^2
+
\nu\|\mathbf A u(t)\|_{\mathbf H}^2
\leq
C\|u(t)\|_{\mathbf V}^6
+
C\|\theta(t)\|_{L^2}^2
+
C\|F(t)\|_{\mathbf H}^2.
\end{equation}

We now multiply the scalar equation in
\eqref{eq:nonlinear-boussinesq-strong} by $A\theta$ in $L^2(\Omega)$. Then
\begin{equation}
\label{eq:strong-theta-energy-boussinesq}
\frac12\frac{\d}{\dt}\|\theta(t)\|_{H^1_0}^2
+
\kappa\|A\theta(t)\|_{L^2}^2
=
-\int_\Omega (u\cdot\nabla\theta)A\theta\dx
+
(f,A\theta)_{L^2}.
\end{equation}
We estimate the two terms on the right-hand side of
\eqref{eq:strong-theta-energy-boussinesq}. Using items \textup{(i)} and
\textup{(iii)} of \Cref{lem:basic-estimates-3d}, we get
\begin{equation*}
\begin{aligned}
\left|
\int_\Omega (u\cdot\nabla\theta)A\theta\dx
\right|
&\leq
C\|u\|_{L^6}
\|\nabla\theta\|_{L^3}
\|A\theta\|_{L^2}
\\
&\leq
C\|u\|_{\mathbf V}
\|\theta\|_{H^1_0}^{1/2}
\|A\theta\|_{L^2}^{3/2}
\\
&\leq
\frac{\kappa}{4}\|A\theta\|_{L^2}^2
+
C\|u\|_{\mathbf V}^4
\|\theta\|_{H^1_0}^2.
\end{aligned}
\end{equation*}
For the second term, using the Cauchy--Schwarz and Young inequalities, we have
\begin{equation*}
\left|
(f,A\theta)_{L^2}
\right|
\leq
\frac{\kappa}{4}\|A\theta\|_{L^2}^2
+
C\|f\|_{L^2}^2.
\end{equation*}
Thus, from \eqref{eq:strong-theta-energy-boussinesq}, we obtain
\begin{equation}
\label{eq:theta-strong-diff-ineq}
\frac{\d}{\dt}\|\theta(t)\|_{H^1_0}^2
+
\kappa\|A\theta(t)\|_{L^2}^2
\leq
C\|u(t)\|_{\mathbf V}^4
\|\theta(t)\|_{H^1_0}^2
+
C\|f(t)\|_{L^2}^2.
\end{equation}

Set
\begin{equation*}
\mathcal Y(t)
:=
\|u(t)\|_{\mathbf V}^2
+
\|\theta(t)\|_{H^1_0}^2,
\qquad
\mathcal Z(t)
:=
\|\mathbf A u(t)\|_{\mathbf H}^2
+
\|A\theta(t)\|_{L^2}^2.
\end{equation*}
Adding \eqref{eq:u-strong-diff-ineq} and
\eqref{eq:theta-strong-diff-ineq}, and using Poincaré's inequality, we deduce
that, for some constants $c_0>0$ and $C>0$,
\begin{equation}
\label{eq:strong-ode-boussinesq}
\frac{\d}{\dt}\mathcal Y(t)
+
c_0\mathcal Z(t)
\leq
C\mathcal Y(t)
+
C\mathcal Y(t)^3
+
C\|F(t)\|_{\mathbf H}^2
+
C\|f(t)\|_{L^2}^2.
\end{equation}

We now use the smallness of the initial datum and the source terms. Let
$r\in(0,1)$ be fixed for the moment and assume that
\begin{equation*}
\|(u_0,\theta_0)\|_{\mathbb X^1}
+
\|F\|_{L^2(0,T;\mathbf H)}
+
\|f\|_{L^2(0,T;L^2)}
\leq r.
\end{equation*}
Define
\begin{equation*}
E_0
:=
\mathcal Y(0)
+
\|F\|_{L^2(0,T;\mathbf H)}^2
+
\|f\|_{L^2(0,T;L^2)}^2.
\end{equation*}
Then $E_0\leq r^2$.

Let
\begin{equation*}
T^\sharp
:=
\sup\left\{
t\in[0,T]:
\mathcal Y(s)\leq1
\ \text{for every }s\in[0,t]
\right\}.
\end{equation*}
Since $\mathcal Y(0)\leq r^2<1$ and $\mathcal Y$ is continuous, the set
defining $T^\sharp$ is nonempty. On $[0,T^\sharp]$, we have
$\mathcal Y^3\leq\mathcal Y$. Hence,
\eqref{eq:strong-ode-boussinesq} gives
\begin{equation*}
\frac{\d}{\dt}\mathcal Y(t)
\leq
C\mathcal Y(t)
+
C\|F(t)\|_{\mathbf H}^2
+
C\|f(t)\|_{L^2}^2.
\end{equation*}
Since $T\in(0,1)$, Gronwall's inequality yields, for every
$t\in[0,T^\sharp]$,
\begin{equation}
\label{eq:bootstrap-Y-bound}
\mathcal Y(t)
\leq
CE_0
\leq
Cr^2,
\end{equation}
where $C>0$ depends only on $\Omega$, $\nu$, and $\kappa$.

We now choose $r_\ast\in(0,1)$, depending only on $\Omega$, $\nu$, and
$\kappa$, small enough so that
\begin{equation*}
Cr_\ast^2\leq\frac12.
\end{equation*}
If $r\leq r_\ast$, then \eqref{eq:bootstrap-Y-bound} gives
$\mathcal Y(t)\leq\frac12$ for every $t\in[0,T^\sharp]$. We claim that
$T^\sharp=T$. Indeed, assume by contradiction that $T^\sharp<T$. Since
$\mathcal Y(T^\sharp)\leq\frac12$, by continuity of $\mathcal Y$ there exists
$\varepsilon>0$ such that $T^\sharp+\varepsilon\leq T$ and
$\mathcal Y(t)\leq1$ for every
$t\in[T^\sharp,T^\sharp+\varepsilon]$. Together with the definition of
$T^\sharp$, this gives $\mathcal Y(t)\leq1$ for every
$t\in[0,T^\sharp+\varepsilon]$, which contradicts the definition of
$T^\sharp$. Hence $T^\sharp=T$. Consequently,
\begin{equation}
\label{eq:Y-bound-small-data}
\sup_{0\leq t\leq T}\mathcal Y(t)
\leq
CE_0.
\end{equation}

Integrating \eqref{eq:strong-ode-boussinesq} over $(0,T)$ and using
\eqref{eq:Y-bound-small-data} and $E_0\leq r_\ast^2$, we obtain
\begin{equation}
\label{eq:Z-bound-small-data}
\int_0^T\mathcal Z(t)\dt
\leq
CE_0.
\end{equation}
Combining \eqref{eq:Y-bound-small-data} and
\eqref{eq:Z-bound-small-data}, we get
\begin{equation*}
\|(u,\theta)\|_{C([0,T];\mathbb X^1)}^2
+
\|(u,\theta)\|_{L^2(0,T;\mathbb X^2)}^2
\leq
CE_0.
\end{equation*}
By the definition of $E_0$, this gives
\eqref{eq:strong-solution-bound-3d}.
\end{proof}

\begin{rmk}
\label{rem:nonlinear-wellposedness-2d}
When $N=2$, system \eqref{eq:nonlinear-boussinesq-strong} is globally well
posed both in $\mathbb X^0$ and in $\mathbb X^1$. More precisely, for every
$T>0$, $(u_0,\theta_0)\in\mathbb X^0$,
$F\in L^2(0,T;\mathbf H)$, and $f\in L^2(0,T;L^2(\Omega))$, there exists a
unique weak solution satisfying
\begin{equation*}
(u,\theta)
\in
C([0,T];\mathbb X^0)\cap L^2(0,T;\mathbb X^1).
\end{equation*}
If, in addition, $(u_0,\theta_0)\in\mathbb X^1$, then the solution satisfies
\begin{equation*}
(u,\theta)
\in
C([0,T];\mathbb X^1)\cap L^2(0,T;\mathbb X^2).
\end{equation*}
Thus, unlike the three-dimensional case, no smallness condition is needed for
the existence and uniqueness of either weak or strong solutions.

Indeed, the cancellations in \eqref{eq:cancelations_2d} give the usual
$L^2$ energy estimate and ensure that the corresponding weak solution
satisfies
\begin{equation}
\label{eq:weak-solution-regularity-2d}
(u,\theta)
\in
L^\infty(0,T;\mathbb X^0)\cap L^2(0,T;\mathbb X^1).
\end{equation}
To obtain the additional regularity, we use the two-dimensional interpolation
inequalities
\begin{equation*}
\|u\|_{L^4}
\leq
C\|u\|_{\mathbf H}^{1/2}\|u\|_{\mathbf V}^{1/2},
\qquad
\|\nabla u\|_{L^4}
\leq
C\|u\|_{\mathbf V}^{1/2}
\|\mathbf A u\|_{\mathbf H}^{1/2},
\end{equation*}
and
\begin{equation*}
\|\nabla\theta\|_{L^4}
\leq
C\|\theta\|_{H^1_0}^{1/2}
\|A\theta\|_{L^2}^{1/2}.
\end{equation*}
These estimates allow us to control the nonlinear term in the velocity
equation as follows
\begin{equation*}
\begin{aligned}
\left|
\bigl(\mathbf B(u,u),\mathbf A u\bigr)_{\mathbf H}
\right|
&\leq
C\|u\|_{L^4}
\|\nabla u\|_{L^4}
\|\mathbf A u\|_{\mathbf H}
\\
&\leq
C\|u\|_{\mathbf H}^{1/2}
\|u\|_{\mathbf V}
\|\mathbf A u\|_{\mathbf H}^{3/2}
\\
&\leq
\frac{\nu}{4}\|\mathbf A u\|_{\mathbf H}^2
+
C\|u\|_{\mathbf H}^2\|u\|_{\mathbf V}^4.
\end{aligned}
\end{equation*}
Similarly, for the transport term in the temperature equation, we obtain
\begin{equation*}
\begin{aligned}
\left|
\int_\Omega (u\cdot\nabla\theta)A\theta\dx
\right|
&\leq
C\|u\|_{L^4}
\|\nabla\theta\|_{L^4}
\|A\theta\|_{L^2}
\\
&\leq
C\|u\|_{\mathbf H}^{1/2}
\|u\|_{\mathbf V}^{1/2}
\|\theta\|_{H^1_0}^{1/2}
\|A\theta\|_{L^2}^{3/2}
\\
&\leq
\frac{\kappa}{4}\|A\theta\|_{L^2}^2
+
C\|u\|_{\mathbf H}^2
\|u\|_{\mathbf V}^2
\|\theta\|_{H^1_0}^2.
\end{aligned}
\end{equation*}

Setting $\mathcal Y(t)
:=
\|u(t)\|_{\mathbf V}^2
+
\|\theta(t)\|_{H^1_0}^2$, and proceeding as in the proof of
\Cref{thm:small-data-strong-wellposedness-3d}, we deduce
\begin{equation*}
\begin{aligned}
\frac{\d}{\dt}\mathcal Y(t)
&+
c_0\left(
\|\mathbf A u(t)\|_{\mathbf H}^2
+
\|A\theta(t)\|_{L^2}^2
\right)
\\
&\leq
C\left(
1+\|u(t)\|_{\mathbf H}^2\|u(t)\|_{\mathbf V}^2
\right)\mathcal Y(t)
+
C\|F(t)\|_{\mathbf H}^2
+
C\|f(t)\|_{L^2}^2.
\end{aligned}
\end{equation*}
The regularity in \eqref{eq:weak-solution-regularity-2d} implies that the
coefficient multiplying $\mathcal Y$ belongs to $L^1(0,T)$. More precisely,
\begin{equation*}
\begin{aligned}
\int_0^T
\left(
1+\|u(t)\|_{\mathbf H}^2\|u(t)\|_{\mathbf V}^2
\right)\dt
&\leq
T
+
\|u\|_{L^\infty(0,T;\mathbf H)}^2
\|u\|_{L^2(0,T;\mathbf V)}^2 <+\infty.
\end{aligned}
\end{equation*}
Gronwall's inequality then yields the global $H^1$-estimate and the additional
regularity $(u,\theta)
\in
C([0,T];\mathbb X^1)\cap L^2(0,T;\mathbb X^2)$. Finally, uniqueness follows from the corresponding energy estimate for the
difference of two solutions.
\end{rmk}

\section{A mixed observability estimate}
\label{sec:spectral}

The goal of this section is to prove an observability estimate for the adjoint
system to \eqref{eq:linear-abstract} on a short time interval of length
$\tau>0$, to be chosen later. The estimate is mixed in the following sense:
the terminal datum of the Stokes adjoint equation is assumed to belong to a
finite-dimensional spectral space, whereas the terminal datum of the scalar
adjoint equation is allowed to be arbitrary in $L^2(\Omega)$.

More precisely, we consider the backward system
\begin{equation}
\label{eq:adjoint-system}
\begin{cases}
-\varphi_t+\nu\mathbf A\varphi=0
& t\in(0,\tau),\\
-\psi_t+\kappa A\psi=\varphi_N
& t\in(0,\tau),\\
\varphi(\tau)=\varphi_\tau,\qquad \psi(\tau)=\psi_\tau,
\end{cases}
\end{equation}
where $\varphi_N$ denotes the last component of the vector field
$\varphi=(\varphi_1,\ldots,\varphi_N)$.

Recalling the spectral decomposition
\eqref{eq:stokes-eigenproblem-abstract}, for $M>0$ we set
\begin{equation*}
\mathbf E_M
:=
\operatorname{span}\{e_j:\mu_j\leq M\},
\end{equation*}
and denote by $\mathbf P_M:\mathbf H\to\mathbf E_M$ the corresponding
orthogonal projection. Throughout this section, the terminal data in
\eqref{eq:adjoint-system} are assumed to satisfy
\begin{equation*}
\varphi_\tau\in\mathbf E_M,
\qquad
\psi_\tau\in L^2(\Omega).
\end{equation*}
Thus, no spectral restriction is imposed on the scalar terminal datum. This
is the main difference with the usual finite-dimensional construction for
coupled parabolic systems, where the different components are localized in
low-frequency spaces. Here only the Stokes component is spectrally
localized, while the scalar component will be treated by means of an
observability estimate for the nonhomogeneous heat equation.

We first recall the spectral inequality for the Stokes operator that will be
used in the proof.

\begin{prop}
\label{prop:stokes-spectral-inequality}
Let $\omega\subset\Omega$ be a nonempty open subset. There exist constants
$C>0$, $C_S>0$, and $M_0>0$ such that, for every $M\geq M_0$, every
$m\in\{1,\ldots,N\}$, and every
$v=(v_1,\ldots,v_N)\in\mathbf E_M$, one has
\begin{equation}
\label{eq:stokes-spectral-inequality}
\|v\|_{\mathbf H}^2
\leq
C e^{C_S\sqrt M}
\int_\omega
\sum_{\substack{1\leq i\leq N\\ i\neq m}}
|v_i(x)|^2\dx.
\end{equation}
\end{prop}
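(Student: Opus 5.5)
The statement is a recalled result: as indicated in the introduction, it is the Stokes spectral inequality with one missing component established in \cite{CSFSS25}. I would therefore invoke that reference directly, after checking that its hypotheses match the present ones: bounded smooth $\Omega\subset\mathbb R^N$, $N\in\{2,3\}$, arbitrary nonempty open $\omega$, Dirichlet Stokes operator, and the cost $e^{C_S\sqrt M}$ uniform in $m$. Since $m$ ranges over a finite set, uniformity in $m$ is automatic once the estimate holds for each $m$: take the maximum of the constants and $\max_m M_0$. For the reader's orientation, I would also sketch how such an inequality is obtained, following the Lebeau--Robbiano/Chaves-Silva--Lebeau scheme \cite{LR95,CSL16}.

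\emph{Augmented elliptic problem.} Given $v=\sum_{\mu_j\le M}a_je_j$, introduce
\begin{equation*}
w(x,s)=\sum_{\mu_j\le M}a_j\frac{\sinh(\sqrt{\mu_j}s)}{\sqrt{\mu_j}}e_j(x),
\qquad
\pi(x,s)=\sum_{\mu_j\le M}a_j\frac{\sinh(\sqrt{\mu_j}s)}{\sqrt{\mu_j}}p_j(x).
\end{equation*}
This pair solves the stationary Stokes-type system
\begin{equation*}
-\partial_s^2w-\Delta w+\nabla\pi=0,
\qquad
\nabla\cdot w=0
\end{equation*}
in $(0,S)\times\Omega$, with $w=0$ on $\partial\Omega$, $w(\cdot,0)=0$ and $\partial_sw(\cdot,0)=v$. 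The explicit sums give $\|w\|_{H^1((0,S)\times\Omega)}\le Ce^{C\sqrt M}\|v\|_{\mathbf H}$ and a lower bound on a subcylinder by $\|v\|_{\mathbf H}$. Consequently, any interpolation inequality of the form
\begin{equation*}
\|w\|_{H^1(\text{subcylinder})}\le C\|w\|^{1-\alpha}_{H^1}\,\bigl(\text{local data of }w\text{ on }\omega\bigr)^{\alpha}
\end{equation*}
yields the spectral inequality with cost $e^{C\sqrt M}$.

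\emph{Interpolation with all components observed.} When every component is observed, this interpolation inequality is the one of \cite{CSL16}. It is proved by Carleman estimates near $\{s=0\}$ and near $\partial\Omega$, with the pressure handled through $\Delta_{x,s}\pi=0$.

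\emph{Removing the component $v_m$.} This is the key and hardest step. Since $w_m$ is not observed, it must be recovered locally in $(0,S)\times\omega$ from $w_i$, $i\neq m$, with loss of at most a factor $e^{C\sqrt M}$. I would first try the following route.
\begin{enumerate}
\item From the equations for $w_i$, $i\neq m$, and interior elliptic regularity, recover $\partial_i\pi$ for $i\neq m$ on a slightly smaller set.
\item Since $\pi$ is harmonic in $(x,s)$, these derivatives determine $\pi$ up to a function of $x_m$ alone. Harmonicity then forces this function to be affine, so $\partial_m\pi$ is controlled up to a constant, which can be absorbed by a compactness/unique-continuation argument.
\item Alternatively, use $\partial_mw_m=-\sum_{i\neq m}\partial_iw_i$ together with $w_m(\cdot,0)=0$ and a curl identity to express $w_m$ locally.
\end{enumerate}
The difficulty is that these local recoveries lose derivatives. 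They must be compensated by interpolation with the global $H^1$ (or higher Sobolev) bound of $w$, which costs only $e^{C\sqrt M}$, so the final order $e^{C_S\sqrt M}$ survives.

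Since this step is exactly the content of \cite{CSFSS25}, in the paper I would simply cite that result rather than reproduce it.
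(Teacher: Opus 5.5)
Your proposal matches the paper, which likewise gives no proof and simply cites \cite{CSFSS25}. There the inequality is proved with the last component omitted, and Remark~1.2 of that reference allows any component to be omitted. Equivalently, you can apply that result to $(P\Omega,P\omega)$ for a coordinate permutation $P$, since $v\mapsto Pv(P^{-1}\cdot)$ preserves Stokes eigenspaces and $L^2$ norms. Your finite-maximum argument for uniformity in $m$ is fine.

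Your orientation sketch is not load-bearing, but it contains an error. The augmented pressure only satisfies $\Delta_x\pi=0$, because $\partial_s^2\pi=\sum_{\mu_j\le M}a_j\sqrt{\mu_j}\sinh(\sqrt{\mu_j}s)p_j\neq0$ in general. So the claims $\Delta_{x,s}\pi=0$ and ``$\pi$ harmonic in $(x,s)$'' are false, and the pressure-recovery heuristic would have to be reworked.
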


The estimate in \Cref{prop:stokes-spectral-inequality} was proved in
\cite{CSFSS25} with the last component omitted from the observation. As
pointed out in Remark~1.2 therein, this choice is only made for notational
convenience: any one of the components may be omitted without changing the
estimate of the observability cost. We shall use the inequality with
$m=N-1$.

We can now state the main result of this section.

\begin{theo}
\label{thm:mixed-observability}
There exist constants $C>0$ and $M_\ast\geq1$ such that, for every
$M\geq M_\ast$, if $\tau=1/\sqrt M$, then the solution $(\varphi,\psi)$ to
\eqref{eq:adjoint-system} with terminal data
\begin{equation*}
(\varphi_\tau,\psi_\tau)
\in
\mathbf E_M\times L^2(\Omega)
\end{equation*}
satisfies
\begin{equation}
\label{eq:mixed-observability}
\|\varphi(0)\|_{\mathbf H}^2
+
\|\psi(0)\|_{L^2}^2
\leq
C e^{C\sqrt M}
\int_0^\tau\int_\omega
\left(
\sum_{i=1}^{N-2}|\varphi_i|^2+|\psi|^2
\right)\dx\dt.
\end{equation}
When $N=2$, the sum involving the components of $\varphi$ is understood to be
zero.
\end{theo}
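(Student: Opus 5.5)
We prove \eqref{eq:mixed-observability} in three stages: first bound the Stokes component, then bound the scalar component, and finally combine them. Throughout, set $\tau=1/\sqrt M$ and write
\begin{equation*}
\mathcal O:=\int_0^\tau\int_\omega\Bigl(\sum_{i=1}^{N-2}|\varphi_i|^2+|\psi|^2\Bigr)\dx\dt .
\end{equation*}
Since $\varphi_\tau\in\mathbf E_M$, we have $\varphi(t)=e^{-\nu(\tau-t)\mathbf A}\varphi_\tau\in\mathbf E_M$ for all $t$. Also $\|\varphi(0)\|_{\mathbf H}\le\|\varphi(t)\|_{\mathbf H}$, and $\|\varphi(t)\|_{\mathbf H}\le e^{\nu M\tau}\|\varphi(0)\|_{\mathbf H}=e^{\nu\sqrt M}\|\varphi(0)\|_{\mathbf H}$. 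Hence all norms of $\varphi(t)$, $t\in[0,\tau]$, are equivalent up to factors of size $e^{C\sqrt M}$, and so are its $H^k$ norms, with factors $M^{k/2}$ (Stokes eigenfunctions, elliptic regularity).

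\emph{Step 1: estimate the Stokes component.} Apply \Cref{prop:stokes-spectral-inequality} with $m=N-1$ at each time and integrate over a subinterval, say $(\tau/4,3\tau/4)$. Using the monotonicity above, this gives
\begin{equation*}
\|\varphi(0)\|_{\mathbf H}^2\le \frac{C}{\tau}e^{C\sqrt M}\int_{\tau/4}^{3\tau/4}\int_{\omega_0}\Bigl(\sum_{i\le N-2}|\varphi_i|^2+|\varphi_N|^2\Bigr)\dx\dt,
\end{equation*}
for some $\omega_0\Subset\omega$. The one missing piece is control of $\varphi_N$ on $\omega_0$ by $\psi$ on $\omega$, and this is the main obstacle. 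From the second equation, $\varphi_N=-\psi_t+\kappa A\psi$ in $\omega$. We therefore take a cutoff $\chi\in C_c^\infty(\omega)$ with $\chi=1$ on $\omega_0$ and a time cutoff $\eta\in C_c^\infty(0,\tau)$ with $\eta=1$ on $(\tau/4,3\tau/4)$ and $|\eta'|\le C/\tau$. Testing against $\eta^2\chi^2\varphi_N$ and integrating by parts moves all derivatives onto $\varphi_N$:
\begin{equation*}
\iint\eta^2\chi^2|\varphi_N|^2=\iint\psi\,(\partial_t+\kappa\Delta)(\eta^2\chi^2\varphi_N).
\end{equation*}
The right-hand side is bounded by $\|\psi\|_{L^2((0,\tau)\times\omega)}$ times terms involving $\tau^{-1}\|\varphi_N\|$, $\|\partial_t\varphi_N\|=\nu\|(\mathbf A\varphi)_N\|$ and $\|\varphi_N\|_{H^2}$. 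Using Stokes regularity and $\varphi\in\mathbf E_M$, all of these are bounded by $CM\,\|\varphi\|_{L^2(0,\tau;\mathbf H)}\le CMe^{C\sqrt M}\sqrt\tau\,\|\varphi(0)\|_{\mathbf H}$. Inserting this into the spectral estimate yields
\begin{equation*}
\|\varphi(0)\|_{\mathbf H}^2\le Ce^{C\sqrt M}\bigl(\mathcal O+\mathcal O^{1/2}\|\varphi(0)\|_{\mathbf H}\bigr).
\end{equation*}
Young's inequality then absorbs the last term, giving $\|\varphi(0)\|^2_{\mathbf H}\le Ce^{C\sqrt M}\mathcal O$, with polynomial factors in $M$ absorbed into the exponential.

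\emph{Step 2: estimate the scalar component.} Consider the heat equation $-\psi_t+\kappa A\psi=\varphi_N$ with source $\varphi_N$. We use the known Lebeau--Robbiano/Carleman observability for the heat equation on $(0,\tau)$, with cost $e^{C/\tau}=e^{C\sqrt M}$. The cleanest route is to split $\psi=\psi^1+\psi^2$, where $\psi^1$ solves the homogeneous equation with data $\psi_\tau$ and $\psi^2$ solves the equation with source $\varphi_N$ and zero final data. Energy estimates give
\begin{equation*}
\|\psi^2\|_{C([0,\tau];L^2)}^2\le C\tau\|\varphi_N\|_{L^2(0,\tau;L^2)}^2\le Ce^{C\sqrt M}\|\varphi(0)\|_{\mathbf H}^2.
\end{equation*}
Heat observability then gives
\begin{equation*}
\|\psi^1(0)\|^2\le Ce^{C\sqrt M}\iint_\omega|\psi^1|^2\le Ce^{C\sqrt M}\Bigl(\iint_\omega|\psi|^2+\|\psi^2\|^2_{L^2(0,\tau;L^2)}\Bigr).
\end{equation*}

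\emph{Step 3: combine.} From Steps 1 and 2, $\|\psi(0)\|^2\le Ce^{C\sqrt M}\bigl(\mathcal O+\|\varphi(0)\|_{\mathbf H}^2\bigr)$, and Step 1 bounds $\|\varphi(0)\|_{\mathbf H}^2\le Ce^{C\sqrt M}\mathcal O$. Together these give \eqref{eq:mixed-observability}. The constant $M_\ast$ is chosen so that $M\ge M_0$, $\tau<1$, and all polynomial factors in $M$ are dominated by $e^{\sqrt M}$.
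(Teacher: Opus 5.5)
Your proposal is correct and follows the paper's proof essentially step for step. The ingredients match: the Stokes spectral inequality with $m=N-1$ on $\omega_0$ integrated over $(\tau/4,3\tau/4)$; recovery of $\varphi_N$ by testing the $\psi$-equation against a space--time cut-off times $\varphi_N$, with the bounds $\|\partial_t\varphi\|_{\mathbf H}+\|\varphi\|_{\mathbb H^2}\le CM\|\varphi\|_{\mathbf H}$; absorption by Young's inequality; and heat observability with source $\varphi_N$ at cost $e^{C/\tau}=e^{C\sqrt M}$.

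The differences are cosmetic. You pivot on $\|\varphi(0)\|_{\mathbf H}$ via $\|\varphi(t)\|_{\mathbf H}\le e^{\nu M\tau}\|\varphi(0)\|_{\mathbf H}$ rather than the integral comparison \eqref{eq:stokes-integral-comparison}. You derive the nonhomogeneous heat observability (Lemma~\ref{lem:nonhomogeneous-heat-observability}, which the paper takes from a Carleman estimate) by superposing the homogeneous inequality with an energy estimate. Finally, the operator $(\partial_t+\kappa\Delta)$ after integration by parts should read $(\partial_t-\kappa\Delta)$; this is harmless, since every term is bounded in absolute value.
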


For readability, we divide the proof of
\Cref{thm:mixed-observability} into several steps, each presented in a
separate subsection. We first derive some consequences of the spectral
localization of the Stokes terminal datum. We then estimate $\varphi_N$ by
using the scalar adjoint equation and combine this estimate with
\Cref{prop:stokes-spectral-inequality}. Finally, an observability estimate for
the nonhomogeneous heat equation allows us to estimate the scalar component
and conclude \eqref{eq:mixed-observability}.

\subsection{Consequences of the spectral localization for the Stokes equation}

Let $\tau>0$ be fixed. We begin with some elementary consequences of the
condition $\varphi_\tau\in\mathbf E_M$. More precisely, if
\begin{equation}
\label{eq:final_data_fds}
\varphi_\tau
=
\sum_{\mu_j\leq M}a_j e_j,
\end{equation}
then the solution of
\begin{equation}
\label{eq:back-stokes}
-\varphi_t+\nu\mathbf A\varphi=0,
\qquad
\varphi(\tau)=\varphi_\tau,
\end{equation}
is given explicitly by
\begin{equation}
\label{eq:rep_formula_stokes_fd}
\varphi(t)
=
\sum_{\mu_j\leq M}
a_j e^{-\nu\mu_j(\tau-t)}e_j,
\qquad
t\in[0,\tau].
\end{equation}
In particular, $\varphi(t)\in\mathbf E_M$ for every $t\in[0,\tau]$.

We shall repeatedly use the following estimates.

\begin{lem}
\label{lem:stokes-spectral-regularity}
Let $M\geq1$ and let $\varphi$ be a solution of \eqref{eq:back-stokes} with
$\varphi_\tau\in\mathbf E_M$. Then:
\begin{enumerate}[label=({\roman*})]
\item \label{eq:phi-energy-decay}
$\|\varphi(t)\|_{\mathbf H}\leq\|\varphi(s)\|_{\mathbf H}$ for all
$0\leq t\leq s\leq\tau$.

\item \label{eq:stokes-Aphi-bound}
$\|\mathbf A\varphi(t)\|_{\mathbf H}
\leq M\|\varphi(t)\|_{\mathbf H}$ for all $0\leq t\leq\tau$.

\item \label{eq:stokes-time-bound}
There exists a constant $C>0$, depending only on $\nu$ and $\Omega$, such
that
\begin{equation*}
\|\varphi_t(t)\|_{\mathbf H}
+
\|\varphi(t)\|_{\mathbb H^2}
\leq
CM\|\varphi(t)\|_{\mathbf H}
\end{equation*}
for all $0\leq t\leq\tau$.
\end{enumerate}
\end{lem}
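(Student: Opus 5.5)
We plan to work directly with the explicit representation \eqref{eq:rep_formula_stokes_fd}, writing $\varphi(t)=\sum_{\mu_j\leq M}a_j e^{-\nu\mu_j(\tau-t)}e_j$. By orthonormality of $(e_j)$ in $\mathbf H$,
\begin{equation*}
\|\varphi(t)\|_{\mathbf H}^2=\sum_{\mu_j\leq M}|a_j|^2e^{-2\nu\mu_j(\tau-t)}.
\end{equation*}

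For item \ref{eq:phi-energy-decay}, each weight $e^{-2\nu\mu_j(\tau-t)}$ is nondecreasing in $t$, because $\mu_j>0$. Comparing the expansions at $t\leq s$ termwise therefore gives $\|\varphi(t)\|_{\mathbf H}\leq\|\varphi(s)\|_{\mathbf H}$.

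For item \ref{eq:stokes-Aphi-bound}, we use $\mathbf A e_j=\mu_je_j$. This gives
\begin{equation*}
\|\mathbf A\varphi(t)\|_{\mathbf H}^2=\sum_{\mu_j\leq M}\mu_j^2|a_j|^2e^{-2\nu\mu_j(\tau-t)}.
\end{equation*}
Since $\mu_j\leq M$ on the index set, this is at most $M^2\|\varphi(t)\|_{\mathbf H}^2$. Note that the sum is finite, since $\mathbf E_M$ is finite dimensional, so $\varphi(t)\in D(\mathbf A)$ and all manipulations are legitimate.

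For item \ref{eq:stokes-time-bound}, the equation gives $\varphi_t=\nu\mathbf A\varphi$. Hence $\|\varphi_t(t)\|_{\mathbf H}=\nu\|\mathbf A\varphi(t)\|_{\mathbf H}\leq\nu M\|\varphi(t)\|_{\mathbf H}$ by item \ref{eq:stokes-Aphi-bound}. For the $\mathbb H^2$ norm we invoke the elliptic regularity for the Stokes operator recalled in \Cref{sec:functional-setting}: $\|v\|_{\mathbb H^2}\leq C\|\mathbf A v\|_{\mathbf H}$ for $v\in D(\mathbf A)$. Combined with item \ref{eq:stokes-Aphi-bound}, this yields $\|\varphi(t)\|_{\mathbb H^2}\leq CM\|\varphi(t)\|_{\mathbf H}$. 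The constant depends only on $\Omega$ and $\nu$, and we will use $M\geq1$ only to absorb constants if needed.

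No step is a genuine obstacle. The only point requiring care is to note that $\varphi(t)\in\mathbf E_M\subset D(\mathbf A)$, which justifies applying the elliptic estimate. The uniformity of that estimate, which does not depend on $M$, is a standard fact for smooth bounded domains.
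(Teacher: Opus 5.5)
Your proof is correct and follows the paper's argument essentially step for step. Items (i) and (ii) come from the explicit spectral representation with orthonormality and the bound $\mu_j\leq M$, and item (iii) comes from $\varphi_t=\nu\mathbf A\varphi$ combined with elliptic regularity for the Stokes operator.
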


\begin{proof}
From the representation formula \eqref{eq:rep_formula_stokes_fd} and the
orthogonality of the eigenfunctions $e_j$, we have, for
$0\leq t\leq s\leq\tau$,
\begin{equation*}
\begin{aligned}
\|\varphi(t)\|_{\mathbf H}^2=
\sum_{\mu_j\leq M}
|a_j|^2e^{-2\nu\mu_j(\tau-t)}&\leq
\sum_{\mu_j\leq M}
|a_j|^2e^{-2\nu\mu_j(\tau-s)}
=
\|\varphi(s)\|_{\mathbf H}^2.
\end{aligned}
\end{equation*}
This proves \ref{eq:phi-energy-decay}. Similarly,
\begin{equation*}
\begin{aligned}
\|\mathbf A\varphi(t)\|_{\mathbf H}^2
&=
\sum_{\mu_j\leq M}
\mu_j^2|a_j|^2e^{-2\nu\mu_j(\tau-t)}
\\
&\leq
M^2
\sum_{\mu_j\leq M}
|a_j|^2e^{-2\nu\mu_j(\tau-t)}
=
M^2\|\varphi(t)\|_{\mathbf H}^2,
\end{aligned}
\end{equation*}
which proves \ref{eq:stokes-Aphi-bound}. From \eqref{eq:back-stokes}, we have
$\varphi_t=\nu\mathbf A\varphi$, and therefore
\begin{equation*}
\|\varphi_t(t)\|_{\mathbf H}
\leq
\nu M\|\varphi(t)\|_{\mathbf H}.
\end{equation*}
On the other hand, elliptic regularity for the Stokes operator and item
\ref{eq:stokes-Aphi-bound} give
\begin{equation*}
\|\varphi(t)\|_{\mathbb H^2}
\leq
C\|\mathbf A\varphi(t)\|_{\mathbf H}
\leq
CM\|\varphi(t)\|_{\mathbf H}.
\end{equation*}
Combining the last two estimates proves \ref{eq:stokes-time-bound}.
\end{proof}

As a direct consequence of \Cref{lem:stokes-spectral-regularity}, for every
$i\in\{1,\ldots,N\}$ and every $t\in[0,\tau]$, we also have
\begin{equation}
\label{eq:component-H2-estimate}
\begin{aligned}
\|\partial_t\varphi_i(t)\|_{L^2}
+
\|\nabla\varphi_i(t)\|_{L^2}
+
\|D^2\varphi_i(t)\|_{L^2}
&\leq
\|\varphi_t(t)\|_{\mathbf H}
+
\|\varphi(t)\|_{\mathbb H^2}
\\
&\leq
CM\|\varphi(t)\|_{\mathbf H}.
\end{aligned}
\end{equation}

We shall also need the following comparison of the solution on different time
intervals.

\begin{lem}
\label{lem:stokes-time-comparison}
Let $M\geq1$ and let $\varphi$ be a solution of \eqref{eq:back-stokes} with
$\varphi_\tau\in\mathbf E_M$. Then, for every $t,s\in[0,\tau]$, we have
\begin{equation}
\label{eq:stokes-time-comparison}
\|\varphi(t)\|_{\mathbf H}^2
\leq
e^{2\nu M|t-s|}
\|\varphi(s)\|_{\mathbf H}^2.
\end{equation}
Moreover, for any two nonempty intervals $J,I\subset(0,\tau)$,
\begin{equation}
\label{eq:stokes-integral-comparison}
\int_J\|\varphi(t)\|_{\mathbf H}^2\dt
\leq
\frac{|J|}{|I|}
e^{2\nu M\tau}
\int_I\|\varphi(t)\|_{\mathbf H}^2\dt.
\end{equation}
\end{lem}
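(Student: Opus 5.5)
We will work directly from the representation formula \eqref{eq:rep_formula_stokes_fd}. The plan is to first prove the pointwise comparison \eqref{eq:stokes-time-comparison}, and then integrate it to obtain \eqref{eq:stokes-integral-comparison}.

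For the pointwise estimate, fix $t,s\in[0,\tau]$ and write
\begin{equation*}
\|\varphi(t)\|_{\mathbf H}^2
=
\sum_{\mu_j\leq M}|a_j|^2e^{-2\nu\mu_j(\tau-s)}e^{2\nu\mu_j(t-s)}.
\end{equation*}
Since $0<\mu_j\leq M$ for every index in the sum, we have $e^{2\nu\mu_j(t-s)}\leq e^{2\nu\mu_j|t-s|}\leq e^{2\nu M|t-s|}$. Pulling this factor out of the sum and using orthonormality to recognize $\sum_{\mu_j\leq M}|a_j|^2e^{-2\nu\mu_j(\tau-s)}=\|\varphi(s)\|_{\mathbf H}^2$ gives \eqref{eq:stokes-time-comparison}. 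The case $t\leq s$ is also covered by item \ref{eq:phi-energy-decay} of \Cref{lem:stokes-spectral-regularity}. The only point requiring attention is the case $t>s$, where the backward solution grows. There the upper bound $\mu_j\leq M$ coming from the spectral localization is exactly what gives a growth rate controlled by $M$.

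For the integral estimate, let $J,I\subset(0,\tau)$. For all $t\in J$ and $s\in I$ we have $|t-s|\leq\tau$, so the pointwise bound gives $\|\varphi(t)\|_{\mathbf H}^2\leq e^{2\nu M\tau}\|\varphi(s)\|_{\mathbf H}^2$. We first integrate in $s$ over $I$ and divide by $|I|$, obtaining
\begin{equation*}
\|\varphi(t)\|_{\mathbf H}^2\leq \frac{e^{2\nu M\tau}}{|I|}\int_I\|\varphi(s)\|_{\mathbf H}^2\,\d s
\qquad\text{for every } t\in J.
\end{equation*}
We then integrate in $t$ over $J$, which produces the factor $|J|$ and yields \eqref{eq:stokes-integral-comparison}. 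No step here is a real obstacle; the argument is elementary once the explicit formula is available.
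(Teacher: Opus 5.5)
Your proposal is correct and follows essentially the same route as the paper: you use the representation formula to rewrite $\|\varphi(t)\|_{\mathbf H}^2$ relative to time $s$ and bound each factor by $e^{2\nu M|t-s|}$ using $0<\mu_j\leq M$. You then integrate the pointwise bound first in $s$ over $I$ and then in $t$ over $J$, using $|t-s|\leq\tau$.
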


\begin{proof}
From \eqref{eq:rep_formula_stokes_fd}, for every $t,s\in[0,\tau]$ we have
\begin{equation*}
\varphi(t)
=
\sum_{\mu_j\leq M}
a_j e^{-\nu\mu_j(\tau-s)}
e^{\nu\mu_j(t-s)}e_j.
\end{equation*}
Using the orthogonality of the eigenfunctions and the bound $\mu_j\leq M$,
we obtain
\begin{equation*}
\begin{aligned}
\|\varphi(t)\|_{\mathbf H}^2
&=
\sum_{\mu_j\leq M}
|a_j|^2e^{-2\nu\mu_j(\tau-s)}
e^{2\nu\mu_j(t-s)} \leq
e^{2\nu M|t-s|}
\|\varphi(s)\|_{\mathbf H}^2.
\end{aligned}
\end{equation*}
This proves \eqref{eq:stokes-time-comparison}. We now integrate
\eqref{eq:stokes-time-comparison} with respect to $s$ over $I$. Since
$|t-s|\leq\tau$ for $t,s\in[0,\tau]$, for every $t\in J$ we obtain
\begin{equation*}
|I|\|\varphi(t)\|_{\mathbf H}^2
\leq
e^{2\nu M\tau}
\int_I\|\varphi(s)\|_{\mathbf H}^2\,ds.
\end{equation*}
Integrating this inequality with respect to $t$ over $J$ gives
\eqref{eq:stokes-integral-comparison}.
\end{proof}

%

\subsection{A local estimate for the fluid coupling}

In this subsection, we assume that $\tau\in(0,1)$ is fixed. The goal is to
show that the scalar equation for $\psi$ allows us to recover the component
$\varphi_N$ locally in space and time.

For this, let $\omega_0\Subset\omega$ be a nonempty open subset. We choose a
function $\chi\in C_c^\infty(\omega)$ such that
\begin{equation*}
0\leq\chi\leq1,
\qquad
\chi\equiv1
\quad\text{in }\omega_0.
\end{equation*}
We also fix a time cut-off $\xi\in C_c^\infty(0,\tau)$ such that
\begin{equation}
\label{eq:time-cutoff}
0\leq\xi\leq1,
\qquad
\xi\equiv1
\quad\text{in }I_\tau,
\qquad
\left\|
\frac{|\xi'|^2}{\xi}
\right\|_{L^\infty(0,\tau)}
\leq
\frac{C}{\tau^2},
\end{equation}
where
\begin{equation}
\label{eq:def_Itau}
I_\tau
:=
\left(
\frac{\tau}{4},\frac{3\tau}{4}
\right).
\end{equation}
Indeed, such a function can be obtained by taking some
$\eta_0\in C_c^\infty(0,1)$ with $0\leq\eta_0\leq1$ and
$\eta_0\equiv1$ in $(\tfrac14,\tfrac34)$, and setting
$\xi(t)=\eta_0(t/\tau)^2$.

\begin{lem}
\label{lem:local-fluid-coupling}
Let $M\geq1$, $\tau\in(0,1)$, and let $(\varphi,\psi)$ be the solution of
\eqref{eq:adjoint-system} with $\varphi_\tau\in\mathbf E_M$. There exists a
constant $C>0$, depending only on $\Omega$, $\omega_0$, $\omega$, $\nu$, and
$\kappa$, but independent of $M$, $\tau$, $\varphi$, and $\psi$, such that
\begin{equation}
\label{eq:local-fluid-coupling}
\int_{I_\tau}\int_{\omega_0}|\varphi_N|^2\dx\dt
\leq
C\left(
M+\frac1\tau
\right)
\left(
\int_0^\tau\int_\omega|\psi|^2\dx\dt
\right)^{1/2}
\left(
\int_0^\tau\|\varphi(t)\|_{\mathbf H}^2\dt
\right)^{1/2}.
\end{equation}
\end{lem}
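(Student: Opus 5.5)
We test the scalar adjoint equation against the localized last component of the Stokes variable. Since $\varphi_N = -\psi_t+\kappa A\psi$, multiplying this identity by $\xi\chi^2\varphi_N$ and integrating over $(0,\tau)\times\Omega$ gives
\begin{equation*}
\int_0^\tau\int_\Omega \xi\chi^2|\varphi_N|^2\dx\dt
=
\int_0^\tau\int_\Omega \bigl(-\psi_t-\kappa\Delta\psi\bigr)\xi\chi^2\varphi_N\dx\dt .
\end{equation*}
The left-hand side dominates $\int_{I_\tau}\int_{\omega_0}|\varphi_N|^2$, because $\xi\equiv1$ on $I_\tau$ and $\chi\equiv1$ on $\omega_0$. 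On the right-hand side we move every derivative off $\psi$.

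\textbf{Time derivative.} We integrate by parts in $t$. There are no boundary terms, since $\xi\in C_c^\infty(0,\tau)$. This term becomes
\begin{equation*}
\int_0^\tau\int_\Omega \psi\bigl(\xi'\chi^2\varphi_N+\xi\chi^2\partial_t\varphi_N\bigr).
\end{equation*}

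\textbf{Laplacian.} We integrate by parts twice in $x$. There are no boundary terms, since $\chi\in C_c^\infty(\omega)$. This term becomes
\begin{equation*}
-\kappa\int_0^\tau\int_\Omega\psi\,\xi\,\Delta(\chi^2\varphi_N).
\end{equation*}
Here $\varphi_N(t)\in H^2(\Omega)$, because $\varphi(t)\in\mathbf E_M\subset D(\mathbf A)$. Every remaining integrand is supported in $\omega$, so Cauchy--Schwarz bounds the whole right-hand side by
\begin{equation*}
\Bigl(\int_0^\tau\int_\omega|\psi|^2\Bigr)^{1/2}
\Bigl(\int_0^\tau\bigl\|\xi'\varphi_N+\xi\partial_t\varphi_N-\kappa\xi\Delta(\chi^2\varphi_N)\bigr\|_{L^2(\omega)}^2\,dt\Bigr)^{1/2}.
\end{equation*}

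\textbf{Bounding the second factor.} By \eqref{eq:component-H2-estimate}, $\|\partial_t\varphi_N(t)\|_{L^2}$ and $\|\Delta(\chi^2\varphi_N)(t)\|_{L^2}$ are both at most $C(\chi)M\|\varphi(t)\|_{\mathbf H}$, using $M\ge1$ to absorb the lower-order terms coming from derivatives of $\chi$. Together with $0\le\xi\le1$, this gives the contribution $CM\bigl(\int_0^\tau\|\varphi\|_{\mathbf H}^2\bigr)^{1/2}$.

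The term with $\xi'$ is the one producing $1/\tau$. By construction $\xi=\eta_0(\cdot/\tau)^2$, so $|\xi'|\le C/\tau$ directly, and its contribution is at most $\frac{C}{\tau}\bigl(\int_0^\tau\|\varphi\|_{\mathbf H}^2\bigr)^{1/2}$.

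Adding the two contributions gives the factor $C(M+1/\tau)$ and hence \eqref{eq:local-fluid-coupling}. The constant depends only on $\chi$ (that is, on $\omega_0,\omega$), on $\eta_0$, on $\nu,\kappa$, and on the elliptic constant of $\Omega$.

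\textbf{Main obstacle.} The delicate point is justifying the integrations by parts given the regularity of $\psi$. For $\psi_\tau\in L^2$, the solution $\psi$ lies only in $L^2(0,\tau;H^1_0)\cap C([0,\tau];L^2)$ with $\psi_t\in L^2(0,\tau;H^{-1})$. I would first prove the identity for smooth terminal data $\psi_\tau\in D(A)$, or interpret the pairings in $H^{-1}$–$H^1_0$ duality, since the test function $\xi\chi^2\varphi_N$ is smooth in time and lies in $H^2\cap H^1_0$ in space. The final inequality involves only $L^2$ norms of $\psi$, so it then passes to general $\psi_\tau$ by density and continuous dependence on the data.
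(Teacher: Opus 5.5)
Your proof is correct and follows the paper's argument: you test the $\psi$-equation against $\xi\chi^2\varphi_N$, move all derivatives onto this test function, and bound $\partial_t\varphi_N$, $\nabla\varphi_N$, $D^2\varphi_N$ by $CM\|\varphi(t)\|_{\mathbf H}$ via \eqref{eq:component-H2-estimate} and $|\xi'|$ by $C/\tau$. The differences from the paper are cosmetic. You use a single Cauchy--Schwarz where the paper splits into five terms $J_1,\dots,J_5$, and you add a density justification of the integrations by parts that the paper leaves implicit; also, the factor $\chi^2$ should stay on the $\xi'$ and $\partial_t$ terms in your second factor, which is harmless since $0\le\chi\le1$.
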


\begin{proof}
Since $\xi\equiv1$ in $I_\tau$ and $\chi\equiv1$ in $\omega_0$, we readily
have
\begin{equation}
\label{eq:localized-left-phiN}
\int_{I_\tau}\int_{\omega_0}|\varphi_N|^2\dx\dt
\leq
\int_0^\tau\int_\Omega
\xi\chi^2|\varphi_N|^2\dx\dt.
\end{equation}
On the other hand, using the scalar equation in
\eqref{eq:adjoint-system}, we deduce that
\begin{equation*}
\int_0^\tau\int_\Omega
\xi\chi^2|\varphi_N|^2\dx\dt
=
\int_0^\tau\int_\Omega
\left(
-\psi_t-\kappa\Delta\psi
\right)
\xi\chi^2\varphi_N\dx\dt.
\end{equation*}
Integrating by parts in time and in space gives
\begin{equation}
\label{eq:ibp-local-fluid-coupling}
\int_0^\tau\int_\Omega
\xi\chi^2|\varphi_N|^2\dx\dt
=
\int_0^\tau\int_\Omega
\psi
\left[
\partial_t\left(\xi\chi^2\varphi_N\right)
-
\kappa\Delta\left(\xi\chi^2\varphi_N\right)
\right]\dx\dt.
\end{equation}
Notice that no boundary terms appear since the time cut-off $\xi$ is
compactly supported in $(0,\tau)$, while the spatial cut-off $\chi$ is
compactly supported in $\omega\Subset\Omega$.

We now estimate the right-hand side of
\eqref{eq:ibp-local-fluid-coupling}. Since $\xi$ depends only on time and
$\chi$ only on space, we have
\begin{equation*}
\partial_t\left(\xi\chi^2\varphi_N\right)
=
\xi'\chi^2\varphi_N
+
\xi\chi^2\partial_t\varphi_N,
\end{equation*}
and
\begin{equation*}
\Delta\left(\xi\chi^2\varphi_N\right)
=
\xi\chi^2\Delta\varphi_N
+
\xi\varphi_N\Delta(\chi^2)
+
2\xi\nabla(\chi^2)\cdot\nabla\varphi_N.
\end{equation*}
Hence, from \eqref{eq:ibp-local-fluid-coupling},
\begin{equation}
\label{eq:ibp-local-fluid-coupling_inter}
\int_0^\tau\int_\Omega
\xi\chi^2|\varphi_N|^2\dx\dt
\leq
\sum_{i=1}^5J_i,
\end{equation}
where
\begin{equation*}
\begin{alignedat}{2}
J_1
&:=
\int_0^\tau\int_\omega
|\psi|\,|\xi'|\,\chi^2|\varphi_N|\dx\dt,
&\qquad
J_2
&:=
\int_0^\tau\int_\omega
|\psi|\,\xi\chi^2|\partial_t\varphi_N|\dx\dt,
\\
J_3
&:=
\kappa\int_0^\tau\int_\omega
|\psi|\,\xi\chi^2|\Delta\varphi_N|\dx\dt,
\end{alignedat}
\end{equation*}
and
\begin{equation*}
\begin{aligned}
J_4&:=\kappa\int_0^\tau\int_{\operatorname{supp}\Delta(\chi^2)}
|\psi|\,\xi|\varphi_N|\,|\Delta(\chi^2)|\dx\dt,\\
J_5&:=2\kappa\int_0^\tau\int_{\operatorname{supp}\nabla(\chi^2)}
|\psi|\,\xi|\nabla(\chi^2)|\,|\nabla\varphi_N|\dx\dt.
\end{aligned}
\end{equation*}

Let us estimate each $J_i$. From \eqref{eq:time-cutoff} and since $0\leq\xi\leq1$, we have
\begin{equation*}
|\xi'(t)|
\leq
\left(
\frac{|\xi'(t)|^2}{\xi(t)}
\right)^{1/2}
\xi(t)^{1/2}
\leq
\frac{C}{\tau}.
\end{equation*}
Therefore, by the Cauchy--Schwarz inequality in space and the definition of
the $\mathbb L^2$-norm,
\begin{equation*}
\begin{aligned}
J_1 \leq
\frac{C}{\tau}
\int_0^\tau
\|\psi(t)\|_{L^2(\omega)}
\|\varphi_N(t)\|_{L^2(\omega)}
\dt \leq
\frac{C}{\tau}
\int_0^\tau
\|\psi(t)\|_{L^2(\omega)}
\|\varphi(t)\|_{\mathbf H}
\dt.
\end{aligned}
\end{equation*}
Similarly, from the Cauchy--Schwarz inequality and
\eqref{eq:component-H2-estimate}, we have
\begin{equation*}
\begin{aligned}
J_2\leq
C
\int_0^\tau
\|\psi(t)\|_{L^2(\omega)}
\|\partial_t\varphi_N(t)\|_{L^2(\Omega)}
\dt
\leq
CM
\int_0^\tau
\|\psi(t)\|_{L^2(\omega)}
\|\varphi(t)\|_{\mathbf H}
\dt.
\end{aligned}
\end{equation*}

We now estimate the spatial terms. Again using
\eqref{eq:component-H2-estimate}, we obtain
\begin{equation*}
\begin{aligned}
J_3
\leq
C
\int_0^\tau
\|\psi(t)\|_{L^2(\omega)}
\|\Delta\varphi_N(t)\|_{L^2(\Omega)}
\dt
\leq
CM
\int_0^\tau
\|\psi(t)\|_{L^2(\omega)}
\|\varphi(t)\|_{\mathbf H}
\dt.
\end{aligned}
\end{equation*}
For the fourth and fifth terms, notice that, since $\chi\equiv1$ in
$\omega_0$ and $\chi$ is compactly supported in $\omega$, the supports of
$\nabla(\chi^2)$ and $\Delta(\chi^2)$ are contained in
$\omega\setminus\omega_0$. Therefore,
\begin{equation*}
\begin{aligned}
J_4
&\leq
C
\int_0^\tau
\|\psi(t)\|_{L^2(\operatorname{supp}\Delta(\chi^2))}
\|\varphi_N(t)\|_{L^2(\operatorname{supp}\Delta(\chi^2))}
\dt
\\
&\leq
C
\int_0^\tau
\|\psi(t)\|_{L^2(\omega)}
\|\varphi(t)\|_{\mathbf H}
\dt.
\end{aligned}
\end{equation*}
Similarly, using \eqref{eq:component-H2-estimate}, we have
\begin{equation*}
\begin{aligned}
J_5
&\leq
C
\int_0^\tau
\|\psi(t)\|_{L^2(\operatorname{supp}\nabla(\chi^2))}
\|\nabla\varphi_N(t)\|_{L^2(\operatorname{supp}\nabla(\chi^2))}
\dt
\\
&\leq
CM
\int_0^\tau
\|\psi(t)\|_{L^2(\omega)}
\|\varphi(t)\|_{\mathbf H}
\dt.
\end{aligned}
\end{equation*}

Putting together the estimates for $J_1,\ldots,J_5$ in
\eqref{eq:ibp-local-fluid-coupling_inter}, and using that $M\geq1$, we deduce
\begin{equation*}
\int_0^\tau\int_\Omega
\xi\chi^2|\varphi_N|^2\dx\dt
\leq
C\left(
M+\frac1\tau
\right)
\int_0^\tau
\|\psi(t)\|_{L^2(\omega)}
\|\varphi(t)\|_{\mathbf H}
\dt.
\end{equation*}
Lastly, the Cauchy--Schwarz inequality yields
\begin{equation*}
\begin{aligned}
\int_0^\tau\int_\Omega
\xi\chi^2|\varphi_N|^2\dx\dt
&\leq
C\left(
M+\frac1\tau
\right)
\left(
\int_0^\tau\int_\omega|\psi|^2\dx\dt
\right)^{1/2}\left(
\int_0^\tau\|\varphi(t)\|_{\mathbf H}^2\dt
\right)^{1/2}.
\end{aligned}
\end{equation*}
Combining this inequality with \eqref{eq:localized-left-phiN} gives
\eqref{eq:local-fluid-coupling}.
\end{proof}

\subsection{Combining the Stokes spectral inequality with the local estimate}

We now combine the Stokes spectral inequality with the local estimate obtained
in \Cref{lem:local-fluid-coupling}. Since
$\varphi(t)\in\mathbf E_M$ for every $t\in[0,\tau]$, we may apply
\Cref{prop:stokes-spectral-inequality} to $\varphi(t)$ on $\omega_0$, choosing
the missing component to be $m=N-1$. More precisely, we have
\begin{equation}
\label{eq:stokes-spectral-on-phi}
\|\varphi(t)\|_{\mathbf H}^2
\leq
C e^{C\sqrt M}
\int_{\omega_0}
\left(
\sum_{i=1}^{N-2}|\varphi_i(t,x)|^2
+
|\varphi_N(t,x)|^2
\right)\dx
\end{equation}
for every $t\in[0,\tau]$.

Integrating \eqref{eq:stokes-spectral-on-phi} over $I_\tau$ and using
\Cref{lem:local-fluid-coupling}, together with $\omega_0\subset\omega$, we
obtain
\begin{equation}
\label{eq:stokes-before-time-comparison}
\begin{aligned}
\int_{I_\tau}\|\varphi(t)\|_{\mathbf H}^2\dt
&\leq
C e^{C\sqrt M}
\int_0^\tau\int_\omega
\sum_{i=1}^{N-2}|\varphi_i|^2\dx\dt
\\
&\quad
+
C e^{C\sqrt M}
\left(
M+\frac1\tau
\right)
\left(
\int_0^\tau\int_\omega|\psi|^2\dx\dt
\right)^{1/2}
\left(
\int_0^\tau\|\varphi(t)\|_{\mathbf H}^2\dt
\right)^{1/2}.
\end{aligned}
\end{equation}
By \Cref{lem:stokes-time-comparison}, applied with $J=(0,\tau)$ and
$I=I_\tau$, and since $|I_\tau|=\tau/2$, we have
\begin{equation}
\label{eq:comparison-Itau}
\int_0^\tau\|\varphi(t)\|_{\mathbf H}^2\dt
\leq
2e^{2\nu M\tau}
\int_{I_\tau}\|\varphi(t)\|_{\mathbf H}^2\dt.
\end{equation}
Combining \eqref{eq:stokes-before-time-comparison} and
\eqref{eq:comparison-Itau}, and applying Young's inequality, we readily obtain the following result.

\begin{lem}
\label{lem:stokes-observation-intermediate}
Let $M\geq M_0$ and $\tau\in(0,1)$. There exists a constant $C>0$, depending
only on $\Omega$, $\omega_0$, $\omega$, $\nu$, and $\kappa$, but independent
of $M$, $\tau$, $\varphi$, and $\psi$, such that
\begin{equation}
\label{eq:stokes-observation-intermediate}
\begin{aligned}
\int_0^\tau\|\varphi(t)\|_{\mathbf H}^2\dt
&\leq
C e^{C\sqrt M+CM\tau}
\int_0^\tau\int_\omega
\sum_{i=1}^{N-2}|\varphi_i|^2\dx\dt
\\
&\quad
+
C e^{C\sqrt M+CM\tau}
\left(
M+\frac1\tau
\right)^2
\int_0^\tau\int_\omega|\psi|^2\dx\dt.
\end{aligned}
\end{equation}
\end{lem}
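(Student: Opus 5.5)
The plan is to substitute \eqref{eq:comparison-Itau} into the left side of \eqref{eq:stokes-before-time-comparison} and then absorb the mixed product term by Young's inequality.

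\emph{Step 1: control of the whole interval by $I_\tau$.} Set
\begin{equation*}
X:=\int_0^\tau\|\varphi(t)\|_{\mathbf H}^2\dt,
\qquad
a:=\int_0^\tau\int_\omega\sum_{i=1}^{N-2}|\varphi_i|^2\dx\dt,
\qquad
b:=\int_0^\tau\int_\omega|\psi|^2\dx\dt.
\end{equation*}
By \Cref{lem:stokes-time-comparison}, in the form \eqref{eq:comparison-Itau}, we have
\begin{equation*}
X\leq 2e^{2\nu M\tau}\int_{I_\tau}\|\varphi(t)\|_{\mathbf H}^2\dt.
\end{equation*}

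\emph{Step 2: insert the spectral estimate.} Bound the integral over $I_\tau$ by \eqref{eq:stokes-before-time-comparison}. This gives
\begin{equation*}
X\leq K a + K\Bigl(M+\frac1\tau\Bigr) b^{1/2}X^{1/2},
\qquad
K:=Ce^{C\sqrt M+2\nu M\tau}.
\end{equation*}
We used that $\omega_0\subset\omega$, and that the $\varphi_N$ contribution on $I_\tau\times\omega_0$ was already handled through \Cref{lem:local-fluid-coupling} with the full-interval quantities $b$ and $X$ on the right.

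\emph{Step 3: absorb the product term.} Apply Young's inequality $xy\leq \tfrac12 x^2+\tfrac12 y^2$ with $x=X^{1/2}$ and $y=K(M+1/\tau)b^{1/2}$. This gives
\begin{equation*}
X\leq Ka+\tfrac12 X+\tfrac12K^2\Bigl(M+\frac1\tau\Bigr)^2 b.
\end{equation*}
Since $X$ is finite, we may absorb $\tfrac12X$ into the left-hand side. We then note that $K\leq K^2$ provided the constant $C$ in $K$ is at least $1$, which we may assume. Finally, $K^2=C^2e^{2C\sqrt M+4\nu M\tau}$ is of the form $Ce^{C\sqrt M+CM\tau}$ after renaming constants. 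This is exactly \eqref{eq:stokes-observation-intermediate}.

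The constants that appear come only from \Cref{prop:stokes-spectral-inequality}, \Cref{lem:local-fluid-coupling}, and $\nu$. Hence they are independent of $M$, $\tau$, $\varphi$ and $\psi$. The restriction $M\geq M_0$ is needed only so that \Cref{prop:stokes-spectral-inequality} applies.

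The only delicate point is the absorption in Step 3. The quantity controlled by \Cref{lem:local-fluid-coupling} involves $X$ over the full interval $(0,\tau)$, while the left-hand side of \eqref{eq:stokes-before-time-comparison} only sees $I_\tau$. The comparison \eqref{eq:comparison-Itau} must therefore be applied \emph{before} using Young's inequality, so that both sides involve the same quantity $X$. This costs the factor $e^{2\nu M\tau}$, which is harmless: it merges into $e^{CM\tau}$.
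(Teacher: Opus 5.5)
Your proposal is correct and follows the paper's own route: it inserts \eqref{eq:comparison-Itau} into \eqref{eq:stokes-before-time-comparison} and absorbs the cross term $X^{1/2}b^{1/2}$ by Young's inequality. The resulting factor $e^{2\nu M\tau}$ merges into $e^{CM\tau}$ exactly as you describe.
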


\subsection{The scalar component}

We now estimate the scalar component $\psi$ of the adjoint system
\eqref{eq:adjoint-system}. No spectral localization is imposed on the terminal
datum $\psi_\tau$. Instead, we use a standard observability inequality for the
heat equation with a source term.

\begin{lem}
\label{lem:nonhomogeneous-heat-observability}
Let $\tau\in(0,1)$, $F\in L^2(0,\tau;L^2(\Omega))$, and
$z_\tau\in L^2(\Omega)$. There exists a constant $C>0$, depending only on
$\Omega$, $\omega$, and $\kappa$, but independent of $\tau$, $F$, and
$z_\tau$, such that every solution of
\begin{equation}
\label{eq:nonhomogeneous-heat}
\begin{cases}
-z_t+\kappa A z=F
& t\in(0,\tau),\\
z(\tau)=z_\tau,
\end{cases}
\end{equation}
satisfies
\begin{equation}
\label{eq:nonhomogeneous-heat-observability}
\|z(0)\|_{L^2}^2
\leq
C e^{C/\tau}
\left(
\int_0^\tau\int_\omega|z|^2\dx\dt
+
\int_0^\tau\|F(t)\|_{L^2}^2\dt
\right).
\end{equation}
\end{lem}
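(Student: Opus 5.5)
We prove \eqref{eq:nonhomogeneous-heat-observability} by splitting $z$ into a free part and a source-driven part, and using the classical observability inequality for the homogeneous heat equation. The one external ingredient is that inequality with its small-time cost: there is $C>0$, depending only on $\Omega$, $\omega$, and $\kappa$, such that every solution of $-w_t+\kappa Aw=0$ on $(0,\tau)$ with $w(\tau)=w_\tau\in L^2(\Omega)$ satisfies
\begin{equation*}
\|w(0)\|_{L^2}^2\leq Ce^{C/\tau}\int_0^\tau\int_\omega|w|^2\dx\dt .
\end{equation*}
This is the Lebeau--Robbiano/Fursikov--Imanuvilov estimate with cost $e^{C/\tau}$; see \cite{LR95,Mil04,TT07}. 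It holds uniformly for $\tau\in(0,1)$, and the time-rescaling by $\kappa$ only changes the constants.

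Write $z=w+\zeta$, where $w$ solves the homogeneous backward problem with $w(\tau)=z_\tau$, and $\zeta$ solves $-\zeta_t+\kappa A\zeta=F$ with $\zeta(\tau)=0$.

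\textbf{Energy bound for $\zeta$.} Multiplying by $\zeta$ and integrating backward from $\tau$ gives, for every $t\in[0,\tau]$,
\begin{equation*}
\|\zeta(t)\|_{L^2}^2\leq \int_t^\tau e^{(s-t)}\|F(s)\|_{L^2}^2\,ds\leq e\int_0^\tau\|F\|_{L^2}^2\dt,
\end{equation*}
using Young's inequality $2(F,\zeta)\leq\|F\|^2+\|\zeta\|^2$, Gronwall's inequality, and $\tau<1$. Consequently both $\|\zeta(0)\|_{L^2}^2$ and $\int_0^\tau\int_\omega|\zeta|^2\dx\dt\leq\tau\sup_t\|\zeta(t)\|^2_{L^2}$ are bounded by $C\|F\|_{L^2(0,\tau;L^2)}^2$.

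\textbf{Conclusion.} From $\|z(0)\|^2\leq 2\|w(0)\|^2+2\|\zeta(0)\|^2$ and the observability inequality for $w$ we get
\begin{equation*}
\|z(0)\|_{L^2}^2\leq Ce^{C/\tau}\int_0^\tau\int_\omega|w|^2\dx\dt+C\|F\|_{L^2(0,\tau;L^2)}^2 .
\end{equation*}
Next, $\int_0^\tau\int_\omega|w|^2\leq 2\int_0^\tau\int_\omega|z|^2+2\int_0^\tau\int_\omega|\zeta|^2$, and the last term is at most $C\|F\|^2_{L^2(0,\tau;L^2)}$ by the energy bound. Since $e^{C/\tau}\geq1$, all $F$-contributions can be absorbed into $Ce^{C/\tau}\int_0^\tau\|F\|^2_{L^2}\dt$, which yields \eqref{eq:nonhomogeneous-heat-observability}. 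The constant depends only on $\Omega$, $\omega$, and $\kappa$, because the energy constants are uniform for $\tau\in(0,1)$.

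The only nontrivial step is the homogeneous observability with explicit cost $e^{C/\tau}$. We quote it rather than reprove it. If a self-contained argument were wanted, one would use the heat spectral inequality $\|P^D_\Lambda g\|^2\leq Ce^{C\sqrt\Lambda}\|P^D_\Lambda g\|^2_{L^2(\omega)}$ in a Lebeau--Robbiano iteration, or a global Carleman estimate with weight $e^{-C/(t(\tau-t))}$, and then track the dependence on $\tau$. Everything else is a routine energy estimate.
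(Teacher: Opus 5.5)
Your proof is correct, but it is organized differently from the paper's.

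\textbf{The paper's route.} The paper applies the global Carleman inequality of \cite[Lemma~1.3]{FCG06} directly to the nonhomogeneous equation. The source $F$ therefore enters through the weighted right-hand side of the Carleman estimate. An energy estimate then passes from the weighted norm, taken in the interior of $(0,\tau)$, to $\|z(0)\|_{L^2}$. The factor $e^{C/\tau}$ comes from tracking the Carleman parameters and the bounds on the weights, details the paper omits.

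\textbf{Your route.} You use linearity. The splitting $z=w+\zeta$ reduces everything to two pieces: the homogeneous observability inequality with cost $e^{C/\tau}$, quoted as a black box, and an elementary backward energy estimate for $\zeta$. Your Gronwall bound $\|\zeta(t)\|_{L^2}^2\leq e\int_0^\tau\|F\|_{L^2}^2\dt$ and the subsequent absorption of all $F$-contributions are correct. The constants are uniform for $\tau\in(0,1)$.

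\textbf{What each buys.} Your route is modular and transparent. No Carleman weights need to be tracked, and any method giving the homogeneous $e^{C/\tau}$ cost is enough. In particular, a Lebeau--Robbiano argument based on the Dirichlet spectral inequality would do. That fits the spectral spirit of the paper and would make the whole linear analysis Carleman-free. The Carleman route gives a single estimate in which the source is measured only in a weighted norm degenerating at $t=0$ and $t=\tau$. This is stronger than what is needed here, at the price of the parameter bookkeeping the paper leaves out. Since the lemma only asks for the unweighted quantity $\int_0^\tau\|F\|_{L^2}^2\dt$, your reduction loses nothing.
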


Estimate \eqref{eq:nonhomogeneous-heat-observability} follows from the Carleman estimate for parabolic operators given in
\cite[Lemma~1.3]{FCG06}, combined with a standard energy estimate. The choice of the Carleman parameters and the corresponding bounds on the weights
give the factor $e^{C/\tau}$. We omit the details.

Applying \Cref{lem:nonhomogeneous-heat-observability} to the scalar equation
in \eqref{eq:adjoint-system}
and using
\begin{equation*}
\|\varphi_N(t)\|_{L^2}
\leq
\|\varphi(t)\|_{\mathbf H},
\qquad
t\in(0,\tau),
\end{equation*}
we obtain
\begin{equation}
\label{eq:scalar-component-estimate}
\|\psi(0)\|_{L^2}^2
\leq
C e^{C/\tau}
\left(
\int_0^\tau\int_\omega|\psi|^2\dx\dt
+
\int_0^\tau\|\varphi(t)\|_{\mathbf H}^2\dt
\right).
\end{equation}

\subsection{Proof of \Cref{thm:mixed-observability}}

We are now in position to prove the main result of this section. Let
$M_\ast\geq M_0$ be large enough so that $M_\ast>1$. Then, for every
$M\geq M_\ast$, the choice $\tau=1/\sqrt M$ satisfies $\tau\in(0,1)$.

Set
\begin{equation*}
\mathcal O_\tau
:=
\int_0^\tau\int_\omega
\left(
\sum_{i=1}^{N-2}|\varphi_i|^2+|\psi|^2
\right)\dx\dt.
\end{equation*}
We first estimate the Stokes component. From
\Cref{lem:stokes-observation-intermediate}, we have
\begin{equation}
\label{eq:est_phi_full_time}
\begin{aligned}
\int_0^\tau\|\varphi(t)\|_{\mathbf H}^2\dt
&\leq
C e^{C\sqrt M+CM\tau}
\int_0^\tau\int_\omega
\sum_{i=1}^{N-2}|\varphi_i|^2\dx\dt
\\
&\quad
+
C e^{C\sqrt M+CM\tau}
\left(
M+\frac1\tau
\right)^2
\int_0^\tau\int_\omega|\psi|^2\dx\dt.
\end{aligned}
\end{equation}
Since $\tau=1/\sqrt M$, we have
\begin{equation*}
M\tau=\sqrt M,
\qquad
\frac1\tau=\sqrt M,
\qquad
M+\frac1\tau\leq2M
\end{equation*}
for every $M\geq1$. We also recall that, for every $q>0$ and
$\varepsilon>0$,
\begin{equation}
\label{eq:est_unif_prod_poly_exp_mixed}
\sup_{M\geq1}
M^q e^{-\varepsilon\sqrt M}
<+\infty.
\end{equation}
Thus, the factor $M^2$ in \eqref{eq:est_phi_full_time} can be absorbed into
the exponential by increasing its constant. We conclude that
\begin{equation}
\label{eq:final-stokes-integral-mixed}
\int_0^\tau\|\varphi(t)\|_{\mathbf H}^2\dt
\leq
C e^{C\sqrt M}\mathcal O_\tau.
\end{equation}

Using item \ref{eq:phi-energy-decay} of
\Cref{lem:stokes-spectral-regularity}, we also have
\begin{equation*}
\|\varphi(0)\|_{\mathbf H}^2
\leq
\frac1\tau
\int_0^\tau\|\varphi(t)\|_{\mathbf H}^2\dt.
\end{equation*}
Since $\tau^{-1}=\sqrt M$, we use
\eqref{eq:est_unif_prod_poly_exp_mixed} once more. Then, it follows from
\eqref{eq:final-stokes-integral-mixed} that
\begin{equation}
\label{eq:final-phi0-estimate-mixed}
\|\varphi(0)\|_{\mathbf H}^2
\leq
C e^{C\sqrt M}\mathcal O_\tau.
\end{equation}

We now estimate the scalar component. From
\eqref{eq:scalar-component-estimate} and $\tau^{-1}=\sqrt M$, we have
\begin{equation*}
\|\psi(0)\|_{L^2}^2
\leq
C e^{C\sqrt M}
\left(
\mathcal O_\tau
+
\int_0^\tau\|\varphi(t)\|_{\mathbf H}^2\dt
\right).
\end{equation*}
Using \eqref{eq:final-stokes-integral-mixed} and increasing $C$ if necessary,
we obtain
\begin{equation}
\label{eq:final-psi0-estimate-mixed}
\|\psi(0)\|_{L^2}^2
\leq
C e^{C\sqrt M}\mathcal O_\tau.
\end{equation}
Finally, adding \eqref{eq:final-phi0-estimate-mixed} and
\eqref{eq:final-psi0-estimate-mixed} gives
\eqref{eq:mixed-observability}.

\section{Lebeau--Robbiano iteration and null controllability}
\label{sec:linear-control}

In this section, we use \Cref{thm:mixed-observability} to prove the null
controllability of the linearized system. We consider a vector control
$h=(h_1,\ldots,h_N)\in L^2(0,T;L^2(\omega)^N)$, with
$h_{N-1}\equiv h_N\equiv0$, and a scalar control
$h_\theta\in L^2(0,T;L^2(\omega))$. The corresponding controlled system is
\begin{equation}
\label{eq:controlled-linear-system}
\begin{cases}
u_t+\nu\mathbf A u
=
\Pi(\theta e_N)+\Pi(\mathbf 1_\omega h)
& t\in(0,T),\\
\theta_t+\kappa A\theta
=
\mathbf 1_\omega h_\theta
& t\in(0,T),\\
(u(0),\theta(0))=(u_0,\theta_0).
\end{cases}
\end{equation}
In dimension two, the vector control vanishes identically, while in dimension
three only its first component may be nonzero.

We can now state the main result of this section.

\begin{theo}
\label{thm:quantitative-linear-controllability}
There exist $T_0\in(0,1)$ and $C>0$ such that, for every
$T\in(0,T_0)$ and every initial datum
$(u_0,\theta_0)\in\mathbf H\times L^2(\Omega)$, there exist controls
\begin{equation*}
h\in L^2(0,T;L^2(\omega)^N),
\qquad
h_\theta\in L^2(0,T;L^2(\omega)),
\end{equation*}
with $h_{N-1}\equiv h_N\equiv0$, such that the corresponding solution of
\eqref{eq:controlled-linear-system} satisfies
\begin{equation*}
u(T)=0,
\qquad
\theta(T)=0.
\end{equation*}
Moreover,
\begin{equation}
\label{eq:quantitative-control-cost}
\begin{aligned}
\|h\|_{L^2(0,T;L^2(\omega)^N)}
+
\|h_\theta\|_{L^2(0,T;L^2(\omega))} \leq
C\exp\left(\frac{C}{T}\right)
\left(
\|u_0\|_{\mathbf H}
+
\|\theta_0\|_{L^2}
\right).
\end{aligned}
\end{equation}
\end{theo}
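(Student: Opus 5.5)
We will prove \Cref{thm:quantitative-linear-controllability} by a Lebeau--Robbiano iteration built from \Cref{thm:mixed-observability} and the free Stokes dissipation. The first step is a \emph{partial control} result derived by duality. Fix $M\geq M_\ast$ and $\tau=1/\sqrt M$. By a standard HUM or penalization argument, estimate \eqref{eq:mixed-observability} produces controls $(h,h_\theta)$ on $(0,\tau)$, with $h_{N-1}\equiv h_N\equiv0$, such that the solution of \eqref{eq:controlled-linear-system} starting from $(u_0,\theta_0)\in\mathbb X^0$ satisfies
\begin{equation*}
\mathbf P_M u(\tau)=0,\qquad \theta(\tau)=0.
\end{equation*}
The controls obey
\begin{equation*}
\|h\|^2_{L^2(0,\tau;L^2(\omega)^N)}+\|h_\theta\|^2_{L^2(0,\tau;L^2(\omega))}\leq Ce^{C\sqrt M}\|(u_0,\theta_0)\|^2_{\mathbb X^0}.
\end{equation*}
The duality pairing is between $(u(\tau),\theta(\tau))$ and the adjoint terminal data $(\varphi_\tau,\psi_\tau)\in\mathbf E_M\times L^2(\Omega)$. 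It is exactly this class of terminal data that forces the low Stokes modes and the whole temperature to vanish. The coupling term is consistent with this pairing: the source $\Pi(\theta e_N)$ tested against $\varphi$ gives $\int\theta\varphi_N$, which matches the adjoint system \eqref{eq:adjoint-system}. Combining with \Cref{thm:linear-wellposedness}(i), the state at time $\tau$ satisfies $\|u(\tau)\|_{\mathbf H}\leq Ce^{C\sqrt M}\|(u_0,\theta_0)\|_{\mathbb X^0}$.

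The second step is a free-evolution phase on an interval of length $\tau$, with zero controls. Since $\theta(\tau)=0$ and $h_\theta=0$, the temperature stays identically zero on this interval, so the buoyancy term disappears and $u$ evolves by the Stokes semigroup. Because $u(\tau)\in\mathbf E_M^\perp$, we get $\|u(2\tau)\|_{\mathbf H}\leq e^{-\nu M\tau}\|u(\tau)\|_{\mathbf H}$. The point is that at every subsequent step the temperature entering the step is zero. Only the first step carries a nonzero $\theta_0$, and \Cref{thm:mixed-observability} handles arbitrary $\psi_\tau$, so this causes no trouble.

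The third step is the iteration. Take $M_k=\beta 4^{k}$ (equivalently $\sqrt{M_k}=\sqrt\beta\,2^k$). The natural choice $\tau_k=1/\sqrt{M_k}$ gives $\sum 2\tau_k=4/\sqrt\beta$, so the horizon is fixed by $\beta\sim(4/T)^2$. The problem is the free decay: $e^{-\nu M_k\tau_k}=e^{-\nu\sqrt{M_k}}$ has the same order as the cost $e^{C\sqrt{M_k}}$ and may not beat it.

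This is the main obstacle, because \Cref{thm:mixed-observability} ties the control time to $1/\sqrt M$. I would resolve it by decoupling the two phases. Control on $(0,1/\sqrt{M_k})$, then let the system evolve freely for a longer time $\sigma_k=K/\sqrt{M_k}$ with a large fixed $K$. Zero control on a longer interval is harmless: $\theta$ remains zero and the high modes decay like $e^{-\nu K\sqrt{M_k}}$. Choosing $\nu K>2C$ (with $C$ including the constant from \Cref{thm:linear-wellposedness}) gives the recursion
\begin{equation*}
\|u(t_{k+1})\|_{\mathbf H}\leq Ce^{(C-\nu K)\sqrt{M_k}}\|u(t_k)\|_{\mathbf H}.
\end{equation*}
Since $\sqrt{M_k}$ grows geometrically, the product over $j<k$ of these factors behaves like $\exp(-c\sqrt{M_k})$. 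Hence the $k$-th control has norm at most $Ce^{C\sqrt{M_k}}e^{-c'\sqrt{M_{k}}}$ times the initial size once $K$ is large enough; the geometric structure means the product of the previous gains dominates the current cost with margin. The only step that is not summable is the first one.

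Summing, the total control norm is bounded by $Ce^{C\sqrt{M_0}}\|(u_0,\theta_0)\|_{\mathbb X^0}$. The horizon is $\sum_k(1+K)/\sqrt{M_k}=2(1+K)/\sqrt\beta=T$, so $\sqrt{M_0}\sim C/T$, and the cost is $C\exp(C/T)$. The restriction $T<T_0$ ensures $M_0\geq M_\ast$.

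To conclude, note that $u(t_k)\to0$ in $\mathbf H$ while $\theta(t_k)=0$ for all $k\geq1$. Continuity of the solution in $\mathbb X^0$, from \Cref{thm:linear-wellposedness} applied with the concatenated $L^2$ controls, then gives $u(T)=0$ and $\theta(T)=0$.
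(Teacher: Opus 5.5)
Your proposal is correct and follows the paper's proof essentially step by step. You use duality from the mixed observability estimate to cancel $\mathbf P_M u$ and all of $\theta$ on a control interval of length $1/\sqrt M$. You then add a longer free-evolution interval of length $L_s/\sqrt M$ (your $K/\sqrt{M_k}$) so that Stokes decay beats the $e^{C\sqrt M}$ cost, and iterate geometrically with horizon of order $1/\sqrt{M_0}$.

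The only difference is cosmetic. You fix the ratio $q=4$ and take $K$ large, which requires $\nu K>2C_1$ and produces a harmless factor $e^{(\nu K-C_1)\sqrt{M_0}}$ absorbed into $Ce^{C\sqrt{M_0}}$. The paper instead fixes any $L_s>C_1/\nu$ and then takes $q$ close to $1$.
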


The proof of \Cref{thm:quantitative-linear-controllability} is developed in
the following subsections. The construction alternates control intervals
with intervals of free evolution. On each control interval, the controls are
chosen so that
\begin{equation*}
\mathbf P_Mu=0,
\qquad
\theta=0
\end{equation*}
at the end of the interval. During the subsequent free evolution, the
temperature remains zero and the high Stokes frequencies decay.

\subsection{The two basic steps}

For $M>0$, we set
\begin{equation}
\label{eq:tau-sigma-LR}
\tau=\frac1{\sqrt M},
\qquad
\sigma=\frac{L_s}{\sqrt M},
\end{equation}
where $L_s>0$ will be chosen below. Increasing $M_\ast$ if necessary, we
assume that $\tau\in(0,1)$ for every $M\geq M_\ast$.

We first construct the controls on the interval $(0,\tau)$.

\begin{lem}
\label{lem:thermal-loop-step}
There exist constants $C_1>0$ and $M_\ast\geq1$ such that, for every
$M\geq M_\ast$ and every
$(u_0,\theta_0)\in\mathbf H\times L^2(\Omega)$, there exist controls
\begin{equation*}
h\in L^2(0,\tau;L^2(\omega)^N),
\qquad
h_\theta\in L^2(0,\tau;L^2(\omega)),
\end{equation*}
with $h_{N-1}\equiv h_N\equiv0$, such that the solution of
\eqref{eq:controlled-linear-system} satisfies
\begin{equation}
\label{eq:thermal-loop-final-constraints}
\mathbf P_Mu(\tau)=0,
\qquad
\theta(\tau)=0.
\end{equation}
Moreover,
\begin{equation}
\label{eq:thermal-loop-cost}
\begin{aligned}
&\|h\|_{L^2(0,\tau;L^2(\omega)^N)}
+
\|h_\theta\|_{L^2(0,\tau;L^2(\omega))}
\\
&\qquad
\leq
C_1e^{C_1\sqrt M}
\left(
\|u_0\|_{\mathbf H}
+
\|\theta_0\|_{L^2}
\right),
\end{aligned}
\end{equation}
and
\begin{equation}
\label{eq:thermal-loop-velocity-rough}
\|u(\tau)\|_{\mathbf H}
\leq
C_1e^{C_1\sqrt M}
\left(
\|u_0\|_{\mathbf H}
+
\|\theta_0\|_{L^2}
\right).
\end{equation}
\end{lem}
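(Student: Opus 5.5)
The plan is a standard duality (HUM) argument based on \Cref{thm:mixed-observability}, carried out on $(0,\tau)$ with $\tau=1/\sqrt M$. The partial null controllability target $\mathbf P_Mu(\tau)=0$, $\theta(\tau)=0$ is equivalent to an observability inequality for the adjoint system \eqref{eq:adjoint-system} with terminal data in $\mathbf E_M\times L^2(\Omega)$. That is exactly the class of data covered by \eqref{eq:mixed-observability}.

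First, the duality identity. For a solution $(u,\theta)$ of \eqref{eq:controlled-linear-system} on $(0,\tau)$ and $(\varphi,\psi)$ solving \eqref{eq:adjoint-system}, pair the equations. The adjoint of $\theta\mapsto\Pi(\theta e_N)$ from $L^2$ to $\mathbf H$ is $\varphi\mapsto\varphi_N$ on $\mathbf H$, which explains the source in the $\psi$ equation. Since $\Pi$ is self-adjoint and $\varphi\in\mathbf H$, we have $(\Pi(\mathbf 1_\omega h),\varphi)=\int_\omega h\cdot\varphi$. This gives
\begin{equation*}
(u(\tau),\varphi_\tau)_{\mathbf H}+(\theta(\tau),\psi_\tau)_{L^2}
=(u_0,\varphi(0))_{\mathbf H}+(\theta_0,\psi(0))_{L^2}
+\int_0^\tau\!\!\int_\omega\Big(\sum_{i=1}^{N-2}h_i\varphi_i+h_\theta\psi\Big)\dx\dt .
\end{equation*}
Here we use $h_{N-1}=h_N=0$.

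Second, the minimization. Define on $\mathbf E_M\times L^2(\Omega)$ the functional
\begin{equation*}
J(\varphi_\tau,\psi_\tau)=\tfrac12\mathcal O_\tau+(u_0,\varphi(0))_{\mathbf H}+(\theta_0,\psi(0))_{L^2},
\end{equation*}
with $\mathcal O_\tau$ as in the proof of \Cref{thm:mixed-observability}. Since the observability inequality is not coercive in the $L^2$ norm of $\psi_\tau$, I would proceed as follows. Either minimize over the completion of $\mathbf E_M\times L^2$ with respect to the seminorm $\mathcal O_\tau^{1/2}$, which is a norm by \eqref{eq:mixed-observability} and backward uniqueness, or use the penalized version $J_\varepsilon=J+\varepsilon\|\psi_\tau\|^2$ and pass to the limit $\varepsilon\to0$. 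The penalized route is the standard, safer one. By \eqref{eq:mixed-observability}, the linear part is bounded by $Ce^{C\sqrt M}(\|u_0\|+\|\theta_0\|)\mathcal O_\tau^{1/2}$. Hence the minimizer satisfies $\mathcal O_\tau\le Ce^{C\sqrt M}(\|u_0\|+\|\theta_0\|)^2$.

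Set $h_i=\mathbf 1_\omega\hat\varphi_i$ for $i\le N-2$, $h_{N-1}=h_N=0$, and $h_\theta=\mathbf 1_\omega\hat\psi$. The Euler--Lagrange equation combined with the duality identity gives $(u(\tau),\varphi_\tau)+(\theta(\tau),\psi_\tau)=0$ for all $\varphi_\tau\in\mathbf E_M$ and $\psi_\tau\in L^2$. This is exactly \eqref{eq:thermal-loop-final-constraints}, and the cost bound \eqref{eq:thermal-loop-cost} follows after taking square roots and enlarging $C_1$. In the $\varepsilon$-route, we get $\|\theta_\varepsilon(\tau)\|\lesssim\sqrt\varepsilon$ together with uniform control bounds. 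A weak limit, using the continuity in \Cref{thm:linear-wellposedness}(i), then yields exact constraints with the same bound.

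Finally, \eqref{eq:thermal-loop-velocity-rough} follows from the energy estimate of \Cref{thm:linear-wellposedness}(i) on $(0,\tau)$ with $\tau<1$. We take $F=\Pi(\mathbf 1_\omega h)$, use $\|\Pi(\mathbf 1_\omega h)\|\le\|h\|$, and apply \eqref{eq:thermal-loop-cost}. The main obstacle is the non-coercivity in the unrestricted $\psi_\tau$ direction, i.e.\ justifying existence of the minimizer and the exact condition $\theta(\tau)=0$. I expect the penalization/weak-limit argument to handle it cleanly, because all bounds are uniform in $\varepsilon$.
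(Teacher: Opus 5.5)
Your proof is correct, but it produces the control by a different duality mechanism than the paper. The paper never minimizes over terminal data. It defines $\mathcal L(\varphi_1,\ldots,\varphi_{N-2},\psi):=-\langle u_0,\varphi(0)\rangle_{\mathbf H}-\langle\theta_0,\psi(0)\rangle_{L^2}$ directly on the subspace $\Lambda\subset L^2(0,\tau;L^2(\omega)^{N-2}\times L^2(\omega))$ of observations of adjoint solutions with data in $\mathbf E_M\times L^2(\Omega)$. There, \eqref{eq:mixed-observability} makes $\mathcal L$ well defined and bounded by $Ce^{C\sqrt M}(\|u_0\|_{\mathbf H}+\|\theta_0\|_{L^2})$. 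Hahn--Banach plus Riesz then give $(h_1,\ldots,h_{N-2},h_\theta)$ satisfying the exact identity, and both constraints follow from the same duality identity you wrote. Since the functional lives on the observation space, the non-coercivity in $\psi_\tau$ that you flag as the main obstacle never arises, and no approximation or limit is needed.

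Your penalized route also closes. Coercivity of $J_\varepsilon$ in $\varphi_\tau$ comes from \eqref{eq:mixed-observability} together with $\|\varphi_\tau\|_{\mathbf H}\le e^{\nu M\tau}\|\varphi(0)\|_{\mathbf H}$ on $\mathbf E_M$, which is worth stating explicitly. The Euler--Lagrange equation gives $\mathbf P_Mu_\varepsilon(\tau)=0$ and $\theta_\varepsilon(\tau)=-2\varepsilon\hat\psi_\tau^\varepsilon$, with $\varepsilon\|\hat\psi_\tau^\varepsilon\|_{L^2}^2$ bounded uniformly. Weak continuity of the affine control-to-state map then passes the constraints and the cost bound to the limit. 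What your version buys is a control of HUM type: a weak limit of observations of adjoint solutions, i.e.\ the minimal-norm admissible control, which lies in the closure of $\Lambda$. The price is an extra approximation step, and the paper's version is shorter. Your first option, completing $\mathbf E_M\times L^2(\Omega)$ under $\mathcal O_\tau^{1/2}$, is essentially the paper's argument in disguise, since that completion is isometric to the closure of $\Lambda$ in $L^2$.
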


\begin{proof}
Let $\Lambda$ be the linear subspace of
$L^2\bigl(0,\tau;L^2(\omega)^{N-2}\times L^2(\omega)\bigr)$ given by
\begin{equation*}
\Lambda
:=
\left\{
\left.
\left(
\varphi_1,\ldots,\varphi_{N-2},\psi
\right)\right|_{(0,\tau)\times\omega}
:
\begin{array}{l}
(\varphi,\psi)\text{ solves }\eqref{eq:adjoint-system},\\
(\varphi_\tau,\psi_\tau)
\in\mathbf E_M\times L^2(\Omega)
\end{array}
\right\}.
\end{equation*}
When $N=2$, only the scalar component $\psi$ appears in the definition of
$\Lambda$.

On $\Lambda$, we define
\begin{equation}
\label{eq:def-functional-thermal-loop}
\mathcal L
\left(
\varphi_1,\ldots,\varphi_{N-2},\psi
\right)
:=
-\left\langle u_0,\varphi(0)\right\rangle_{\mathbf H}
-\left\langle\theta_0,\psi(0)\right\rangle_{L^2}.
\end{equation}
By \Cref{thm:mixed-observability}, this functional is well defined and
satisfies
\begin{equation}
\label{eq:functional-bound-thermal-loop}
\begin{aligned}
\left|
\mathcal L
\left(
\varphi_1,\ldots,\varphi_{N-2},\psi
\right)
\right|
&\leq
Ce^{C\sqrt M}
\left(
\|u_0\|_{\mathbf H}
+
\|\theta_0\|_{L^2}
\right)
\\
&\quad\times
\left(
\int_0^\tau\int_\omega
\left(
\sum_{i=1}^{N-2}|\varphi_i|^2+|\psi|^2
\right)\dx\dt
\right)^{1/2}.
\end{aligned}
\end{equation}
The Hahn--Banach theorem allows us to extend $\mathcal L$ to
$L^2\bigl(0,\tau;L^2(\omega)^{N-2}\times L^2(\omega)\bigr)$ while preserving
this bound. By the Riesz representation theorem, there exist
$h_1,\ldots,h_{N-2}$ and $h_\theta$ such that
\begin{equation}
\label{eq:thermal-loop-duality-choice}
\int_0^\tau\int_\omega
\left(
\sum_{i=1}^{N-2}h_i\varphi_i+h_\theta\psi
\right)\dx\dt
=
-\left\langle u_0,\varphi(0)\right\rangle_{\mathbf H}
-\left\langle\theta_0,\psi(0)\right\rangle_{L^2}
\end{equation}
for every adjoint solution with terminal datum in
$\mathbf E_M\times L^2(\Omega)$. Setting $h_{N-1}=h_N=0$, we obtain a vector
control $h=(h_1,\ldots,h_N)$ satisfying \eqref{eq:thermal-loop-cost}. In
dimension two, this simply gives $h=(0,0)$.

Let $(u,\theta)$ be the corresponding solution of
\eqref{eq:controlled-linear-system}. By duality between
\eqref{eq:controlled-linear-system} and \eqref{eq:adjoint-system}, we have
\begin{equation*}
\begin{aligned}
&\left\langle u(\tau),\varphi_\tau\right\rangle_{\mathbf H}
+
\left\langle\theta(\tau),\psi_\tau\right\rangle_{L^2}
\\
&\quad
=
\left\langle u_0,\varphi(0)\right\rangle_{\mathbf H}
+
\left\langle\theta_0,\psi(0)\right\rangle_{L^2}
+
\int_0^\tau\int_\omega
\left(
\sum_{i=1}^{N-2}h_i\varphi_i+h_\theta\psi
\right)\dx\dt.
\end{aligned}
\end{equation*}
It follows from \eqref{eq:thermal-loop-duality-choice} that
\begin{equation*}
\left\langle u(\tau),\varphi_\tau\right\rangle_{\mathbf H}
+
\left\langle\theta(\tau),\psi_\tau\right\rangle_{L^2}
=0
\end{equation*}
for every
$(\varphi_\tau,\psi_\tau)\in\mathbf E_M\times L^2(\Omega)$. Taking first
$\psi_\tau=0$ and then $\varphi_\tau=0$, we obtain
\eqref{eq:thermal-loop-final-constraints}.

Finally, item \textup{(i)} of \Cref{thm:linear-wellposedness} and
\eqref{eq:thermal-loop-cost} give
\begin{equation*}
\begin{aligned}
\|u(\tau)\|_{\mathbf H}
&\leq
C\left(
\|u_0\|_{\mathbf H}
+
\|\theta_0\|_{L^2}
+
\|h\|_{L^2(0,\tau;L^2(\omega)^N)}
+
\|h_\theta\|_{L^2(0,\tau;L^2(\omega))}
\right)
\\
&\leq
C_1e^{C_1\sqrt M}
\left(
\|u_0\|_{\mathbf H}
+
\|\theta_0\|_{L^2}
\right),
\end{aligned}
\end{equation*}
which proves \eqref{eq:thermal-loop-velocity-rough}.
\end{proof}

We now set the controls equal to zero on $(\tau,\tau+\sigma)$. By
\eqref{eq:thermal-loop-final-constraints}, the temperature remains zero on
this interval and $u(\tau)$ contains only Stokes frequencies larger than
$M$. We use these two properties in the following lemma.

\begin{lem}
\label{lem:free-stokes-dissipation-step}
Let $L_s>C_1/\nu$, where $C_1$ is the constant in
\Cref{lem:thermal-loop-step}. Then, for every $M\geq M_\ast$, the solution
constructed in \Cref{lem:thermal-loop-step} satisfies
\begin{equation}
\label{eq:one-step-reduction}
\|u(\tau+\sigma)\|_{\mathbf H}
+
\|\theta(\tau+\sigma)\|_{L^2}
\leq
C_1e^{-\gamma\sqrt M}
\left(
\|u_0\|_{\mathbf H}
+
\|\theta_0\|_{L^2}
\right),
\end{equation}
where $\gamma:=\nu L_s-C_1>0$.
\end{lem}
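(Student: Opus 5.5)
On the interval $(\tau,\tau+\sigma)$ the controls vanish, so the plan is to follow the two components of \eqref{eq:controlled-linear-system} separately, starting from the data at time $\tau$ produced by \Cref{lem:thermal-loop-step}.

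\emph{Temperature.} On $(\tau,\tau+\sigma)$ the scalar equation is $\theta_t+\kappa A\theta=0$ with $\theta(\tau)=0$ by \eqref{eq:thermal-loop-final-constraints}. By uniqueness, item (i) of \Cref{thm:linear-wellposedness} applied with zero data and zero source gives $\theta\equiv0$ on $[\tau,\tau+\sigma]$. In particular $\theta(\tau+\sigma)=0$.

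\emph{Velocity.} Since $\theta\equiv0$, the buoyancy term $\Pi(\theta e_N)$ disappears, and $u$ solves the free Stokes equation $u_t+\nu\mathbf A u=0$ on this interval. Its solution is $u(\tau+s)=e^{-\nu s\mathbf A}u(\tau)$. Expand $u(\tau)=\sum_j b_j e_j$. The condition $\mathbf P_M u(\tau)=0$ means $b_j=0$ whenever $\mu_j\le M$. Orthogonality then gives
\begin{equation*}
\|u(\tau+\sigma)\|_{\mathbf H}^2=\sum_{\mu_j>M}|b_j|^2e^{-2\nu\mu_j\sigma}\le e^{-2\nu M\sigma}\|u(\tau)\|_{\mathbf H}^2 .
\end{equation*}
With $\sigma=L_s/\sqrt M$ from \eqref{eq:tau-sigma-LR} we have $M\sigma=L_s\sqrt M$, so $\|u(\tau+\sigma)\|_{\mathbf H}\le e^{-\nu L_s\sqrt M}\|u(\tau)\|_{\mathbf H}$.

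\emph{Conclusion.} Insert the rough bound \eqref{eq:thermal-loop-velocity-rough}:
\begin{equation*}
\|u(\tau+\sigma)\|_{\mathbf H}+\|\theta(\tau+\sigma)\|_{L^2}\le C_1e^{C_1\sqrt M-\nu L_s\sqrt M}\left(\|u_0\|_{\mathbf H}+\|\theta_0\|_{L^2}\right).
\end{equation*}
This is exactly \eqref{eq:one-step-reduction} with $\gamma=\nu L_s-C_1$, which is positive because $L_s>C_1/\nu$.

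There is no real obstacle in this argument. The one point to watch is that the dissipation must act only on frequencies $\mu_j>M$, and this holds because $\theta$ stays zero: the coupling then cannot reintroduce low Stokes modes during the free evolution.
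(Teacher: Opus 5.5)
Your proposal is correct and follows essentially the same route as the paper: uniqueness for the homogeneous heat equation keeps $\theta\equiv0$ on $[\tau,\tau+\sigma]$, so $u(\tau+\sigma)=e^{-\nu\sigma\mathbf A}u(\tau)$ decays by at least $e^{-\nu M\sigma}=e^{-\nu L_s\sqrt M}$ because $\mathbf P_Mu(\tau)=0$. Combining this with the rough bound \eqref{eq:thermal-loop-velocity-rough} gives \eqref{eq:one-step-reduction} with $\gamma=\nu L_s-C_1>0$.
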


\begin{proof}
Since the controls vanish on $(\tau,\tau+\sigma)$ and $\theta(\tau)=0$, the
uniqueness of the solution to the homogeneous heat equation gives
\begin{equation}
\label{eq:theta-zero-after-loop}
\theta(\tau+r)=0
\qquad
\text{for every }r\in[0,\sigma].
\end{equation}
Consequently,
\begin{equation*}
u(\tau+\sigma)
=
e^{-\nu\sigma\mathbf A}u(\tau).
\end{equation*}
Since $\mathbf P_Mu(\tau)=0$, the spectral decomposition of the Stokes
semigroup gives
\begin{equation*}
\|u(\tau+\sigma)\|_{\mathbf H}
\leq
e^{-\nu M\sigma}\|u(\tau)\|_{\mathbf H}.
\end{equation*}
Using \eqref{eq:thermal-loop-velocity-rough} and
$\sigma=L_s/\sqrt M$, we obtain
\begin{equation*}
\begin{aligned}
\|u(\tau+\sigma)\|_{\mathbf H}
&\leq
C_1e^{-\nu M\sigma+C_1\sqrt M}
\left(
\|u_0\|_{\mathbf H}
+
\|\theta_0\|_{L^2}
\right)
\\
&=
C_1e^{-(\nu L_s-C_1)\sqrt M}
\left(
\|u_0\|_{\mathbf H}
+
\|\theta_0\|_{L^2}
\right).
\end{aligned}
\end{equation*}
Together with \eqref{eq:theta-zero-after-loop}, this proves
\eqref{eq:one-step-reduction}.
\end{proof}

From now on, $L_s>C_1/\nu$ is fixed independently of $M$.

\subsection{The iterative construction}

We now iterate the two steps obtained in the previous subsection. Each
interval is divided into a control interval, where
\Cref{lem:thermal-loop-step} is applied, followed by an interval of free
evolution, where \Cref{lem:free-stokes-dissipation-step} is used.

Let $q>1$ and $M_0\geq M_\ast$ be parameters to be chosen below. For every
$j\in\mathbb N$, we set
\begin{equation*}
M_j=M_0q^j,
\qquad
\tau_j=\frac1{\sqrt{M_j}},
\qquad
\sigma_j=\frac{L_s}{\sqrt{M_j}},
\qquad
\ell_j=\tau_j+\sigma_j,
\end{equation*}
where $L_s$ is the constant fixed at the end of the previous subsection. We
also define
\begin{equation*}
t_0=0,
\qquad
t_{j+1}=t_j+\ell_j,
\qquad
j\in\mathbb N.
\end{equation*}
Thus, $t_j=\sum_{m=0}^{j-1}\ell_m$ for every $j\geq1$. Since
$M_j=M_0q^j$, the limiting time of the construction is
\begin{equation}
\label{eq:Tstar-definition-LR}
\begin{aligned}
T_\ast
:=
\lim_{n\to\infty}t_n
&=
\sum_{j=0}^\infty(\tau_j+\sigma_j)
\\
&=
\frac{1+L_s}{\sqrt{M_0}}
\sum_{j=0}^\infty q^{-j/2}
=
\frac{1+L_s}{\sqrt{M_0}(1-q^{-1/2})}
<+\infty.
\end{aligned}
\end{equation}

We fix $q>1$ sufficiently close to $1$ so that
\begin{equation}
\label{eq:q-choice-LR}
\frac{\gamma}{q^{1/2}-1}>2C_1,
\end{equation}
where $C_1$ is the constant in \eqref{eq:thermal-loop-cost} and $\gamma$ is
the constant in \eqref{eq:one-step-reduction}. Once $q$ is fixed, we increase
$M_\ast$ if necessary so that
\begin{equation*}
\frac{1+L_s}{\sqrt{M_\ast}(1-q^{-1/2})}<1,
\end{equation*}
and then take $M_0\geq M_\ast$. It follows from
\eqref{eq:Tstar-definition-LR} that $T_\ast<1$. Moreover,
$M_j\geq M_\ast$ and $\tau_j\in(0,1)$ for every $j\in\mathbb N$, so the two
steps of the previous subsection can be applied at every level.

We construct the controls on $(0,T_\ast)$. For every $j\in\mathbb N$, the
interval $(t_j,t_{j+1})$ is divided into
\begin{equation*}
(t_j,t_j+\tau_j)
\qquad\text{and}\qquad
(t_j+\tau_j,t_{j+1}).
\end{equation*}
On the first subinterval, we apply \Cref{lem:thermal-loop-step}, translated
to the initial time $t_j$, with $M=M_j$ and initial datum
$(u(t_j),\theta(t_j))$. This gives controls
\begin{equation*}
h^j\in L^2(t_j,t_j+\tau_j;L^2(\omega)^N),
\qquad
h_\theta^j\in L^2(t_j,t_j+\tau_j;L^2(\omega)),
\end{equation*}
with $h_{N-1}^j\equiv h_N^j\equiv0$, such that
\begin{equation*}
\mathbf P_{M_j}u(t_j+\tau_j)=0,
\qquad
\theta(t_j+\tau_j)=0.
\end{equation*}
On the second subinterval, $(t_j+\tau_j,t_{j+1})$, both controls are set
equal to zero. Applying \Cref{lem:free-stokes-dissipation-step}, translated
to the initial time $t_j+\tau_j$, we obtain
\begin{equation*}
\begin{aligned}
&\|u(t_{j+1})\|_{\mathbf H}
+
\|\theta(t_{j+1})\|_{L^2}
\\
&\qquad
\leq
C e^{-\gamma\sqrt{M_j}}
\left(
\|u(t_j)\|_{\mathbf H}
+
\|\theta(t_j)\|_{L^2}
\right).
\end{aligned}
\end{equation*}

Set
\begin{equation*}
A_j
:=
\|u(t_j)\|_{\mathbf H}
+
\|\theta(t_j)\|_{L^2},
\qquad
j\in\mathbb N.
\end{equation*}
The previous estimate becomes
\begin{equation*}
A_{j+1}
\leq
C e^{-\gamma\sqrt{M_j}}A_j,
\qquad
j\in\mathbb N.
\end{equation*}
Iterating it, we obtain, for every $n\geq1$,
\begin{equation}
\label{eq:Aj-iterated-reduction-LR}
A_n
\leq
C^n
\exp\left(
-\gamma\sum_{j=0}^{n-1}\sqrt{M_j}
\right)
A_0.
\end{equation}
Since $M_j=M_0q^j$, we have
\begin{equation*}
\sum_{j=0}^{n-1}\sqrt{M_j}
=
\sqrt{M_0}
\sum_{j=0}^{n-1}q^{j/2}
=
\sqrt{M_0}
\frac{q^{n/2}-1}{q^{1/2}-1}.
\end{equation*}
In particular,
\begin{equation*}
n\log C
-
\gamma\sum_{j=0}^{n-1}\sqrt{M_j}
\longrightarrow
-\infty
\qquad
\text{as }n\to\infty.
\end{equation*}
It follows from \eqref{eq:Aj-iterated-reduction-LR} that
\begin{equation}
\label{eq:Aj-goes-to-zero-LR}
A_n\longrightarrow0
\qquad
\text{as }n\to\infty.
\end{equation}

It remains to check that the controls obtained by concatenation belong to the
required spaces. We extend $(h^j,h_\theta^j)$ by zero to the whole interval
$(t_j,t_{j+1})$ and define
\begin{equation*}
h(t)=h^j(t),
\qquad
h_\theta(t)=h_\theta^j(t),
\qquad
t\in(t_j,t_{j+1}).
\end{equation*}
Since the intervals $(t_j,t_{j+1})$ are pairwise disjoint, we have
\begin{equation*}
\begin{aligned}
&\|h\|_{L^2(0,T_\ast;L^2(\omega)^N)}^2
+
\|h_\theta\|_{L^2(0,T_\ast;L^2(\omega))}^2
\\
&\qquad
=
\sum_{j=0}^\infty
\left(
\|h^j\|_{L^2(t_j,t_{j+1};L^2(\omega)^N)}^2
+
\|h_\theta^j\|_{L^2(t_j,t_{j+1};L^2(\omega))}^2
\right).
\end{aligned}
\end{equation*}
Thus, it is enough to prove that the series on the right-hand side is finite.
In fact, we prove the stronger estimate
\begin{equation}
\label{eq:global-control-cost-M0}
\begin{aligned}
\sum_{j=0}^\infty
\bigl(
&\|h^j\|_{L^2(t_j,t_{j+1};L^2(\omega)^N)}
\\
&+
\|h_\theta^j\|_{L^2(t_j,t_{j+1};L^2(\omega))}
\bigr)
\leq
Ce^{C\sqrt{M_0}}A_0.
\end{aligned}
\end{equation}

From the control estimate in \Cref{lem:thermal-loop-step}, applied at the
$j$-th level, we have
\begin{equation*}
\begin{aligned}
&\|h^j\|_{L^2(t_j,t_{j+1};L^2(\omega)^N)}
+
\|h_\theta^j\|_{L^2(t_j,t_{j+1};L^2(\omega))}
\\
&\qquad
\leq
C_1e^{C_1\sqrt{M_j}}A_j.
\end{aligned}
\end{equation*}
In particular,
\begin{equation}
\label{eq:first-block-control-cost-LR}
\begin{aligned}
&\|h^0\|_{L^2(t_0,t_1;L^2(\omega)^N)}
+
\|h_\theta^0\|_{L^2(t_0,t_1;L^2(\omega))}
\\
&\qquad
\leq
C_1e^{C_1\sqrt{M_0}}A_0.
\end{aligned}
\end{equation}
Using \eqref{eq:Aj-iterated-reduction-LR}, for every $j\geq1$ we obtain
\begin{equation}
\label{eq:block-control-cost-iterated-LR}
\begin{aligned}
&\|h^j\|_{L^2(t_j,t_{j+1};L^2(\omega)^N)}
+
\|h_\theta^j\|_{L^2(t_j,t_{j+1};L^2(\omega))}
\\
&\qquad
\leq
C_1C^j
\exp\left(
C_1\sqrt{M_j}
-
\gamma\sum_{m=0}^{j-1}\sqrt{M_m}
\right)
A_0.
\end{aligned}
\end{equation}

We now estimate the exponential factor in
\eqref{eq:block-control-cost-iterated-LR}. Since
$\sqrt{M_j}=\sqrt{M_0}q^{j/2}$ and
\begin{equation*}
\sum_{m=0}^{j-1}\sqrt{M_m}
=
\sqrt{M_0}
\frac{q^{j/2}-1}{q^{1/2}-1},
\end{equation*}
we have
\begin{equation*}
\begin{aligned}
C_1\sqrt{M_j}
-
\gamma\sum_{m=0}^{j-1}\sqrt{M_m}
&=
C_1\sqrt{M_0}q^{j/2}
-
\gamma\sqrt{M_0}
\frac{q^{j/2}-1}{q^{1/2}-1}
\\
&=
\sqrt{M_0}
\left[
\left(
C_1-\frac{\gamma}{q^{1/2}-1}
\right)
q^{j/2}
+
\frac{\gamma}{q^{1/2}-1}
\right].
\end{aligned}
\end{equation*}
By \eqref{eq:q-choice-LR},
\begin{equation*}
C_1-\frac{\gamma}{q^{1/2}-1}
<
-C_1,
\end{equation*}
and consequently
\begin{equation}
\label{eq:exponent-control-cost-M0}
C_1\sqrt{M_j}
-
\gamma\sum_{m=0}^{j-1}\sqrt{M_m}
\leq
-C_1\sqrt{M_0}q^{j/2}
+
C\sqrt{M_0},
\end{equation}
where $C>0$ is independent of $j$ and $M_0$. Inserting
\eqref{eq:exponent-control-cost-M0} into
\eqref{eq:block-control-cost-iterated-LR}, we obtain, for every $j\geq1$,
\begin{equation}
\label{eq:block-control-cost-summable-M0}
\begin{aligned}
&\|h^j\|_{L^2(t_j,t_{j+1};L^2(\omega)^N)}
+
\|h_\theta^j\|_{L^2(t_j,t_{j+1};L^2(\omega))}
\\
&\qquad
\leq
Ce^{C\sqrt{M_0}}
C^j e^{-C_1\sqrt{M_0}q^{j/2}}A_0.
\end{aligned}
\end{equation}
Since $M_0\geq1$ and $q>1$ is fixed, the series
\begin{equation*}
\sum_{j=1}^\infty
C^j e^{-C_1\sqrt{M_0}q^{j/2}}
\end{equation*}
is bounded uniformly with respect to $M_0$. Combining this fact with
\eqref{eq:first-block-control-cost-LR} and
\eqref{eq:block-control-cost-summable-M0}, we obtain
\eqref{eq:global-control-cost-M0}. In particular,
\begin{equation*}
h\in L^2(0,T_\ast;L^2(\omega)^N),
\qquad
h_\theta\in L^2(0,T_\ast;L^2(\omega)),
\end{equation*}
with $h_{N-1}\equiv h_N\equiv0$, and
\begin{equation}
\label{eq:global-control-L2-cost-LR}
\begin{aligned}
&\|h\|_{L^2(0,T_\ast;L^2(\omega)^N)}
+
\|h_\theta\|_{L^2(0,T_\ast;L^2(\omega))}
\\
&\qquad
\leq
Ce^{C\sqrt{M_0}}A_0.
\end{aligned}
\end{equation}

We now pass to the limit in the state. By
\Cref{thm:linear-wellposedness}, the control obtained by concatenation gives
a solution satisfying
\begin{equation*}
(u,\theta)
\in
C([0,T_\ast];\mathbf H\times L^2(\Omega)).
\end{equation*}
By uniqueness, this solution agrees on each interval $(0,t_n)$ with the
solution obtained by concatenating the first $n$ steps. Hence, by
\eqref{eq:Aj-goes-to-zero-LR},
\begin{equation*}
\|u(t_j)\|_{\mathbf H}
+
\|\theta(t_j)\|_{L^2}
\longrightarrow0
\qquad
\text{as }j\to\infty.
\end{equation*}
Since $t_j\uparrow T_\ast$, the continuity of the solution gives
\begin{equation*}
(u(t_j),\theta(t_j))
\longrightarrow
(u(T_\ast),\theta(T_\ast))
\qquad
\text{in }\mathbf H\times L^2(\Omega).
\end{equation*}
Therefore,
\begin{equation}
\label{eq:null-at-Tstar-LR}
u(T_\ast)=0,
\qquad
\theta(T_\ast)=0.
\end{equation}

\begin{rmk}
The temperature is zero at the end of every control interval:
\begin{equation*}
\theta(t_j+\tau_j)=0,
\qquad
j\in\mathbb N.
\end{equation*}
Since the scalar equation is homogeneous during the following interval of
free evolution, we also have
\begin{equation*}
\theta(t)=0
\qquad
\text{for every }t\in[t_j+\tau_j,t_{j+1}].
\end{equation*}
In particular, $\theta(t_j)=0$ and
$A_j=\|u(t_j)\|_{\mathbf H}$ for every $j\geq1$. We have kept the definition
of $A_j$ involving both components since the initial datum
$(u_0,\theta_0)$ is arbitrary.
\end{rmk}

\subsection{The control cost in small time}

We now conclude the proof by expressing the limiting time and the control
estimate obtained in the iterative construction in terms of the prescribed
time $T$.

\begin{proof}[Proof of \Cref{thm:quantitative-linear-controllability}]
Set $A_0:=\|u_0\|_{\mathbf H}+\|\theta_0\|_{L^2}$. From
\eqref{eq:global-control-L2-cost-LR}, the controls constructed in the previous section satisfy
\begin{equation}
\label{eq:global-cost-M0-theorem}
\|h\|_{L^2(0,T_\ast;L^2(\omega)^N)}
+
\|h_\theta\|_{L^2(0,T_\ast;L^2(\omega))}
\leq
Ce^{C\sqrt{M_0}}A_0.
\end{equation}

On the other hand, once $q>1$ has been fixed,
\eqref{eq:Tstar-definition-LR} gives
\begin{equation*}
T_\ast(M_0)=K M_0^{-1/2},
\qquad
K
:=
(1+L_s)\sum_{j=0}^\infty q^{-j/2}
=
\frac{1+L_s}{1-q^{-1/2}}
>0.
\end{equation*}
In particular, $K$ is independent of $M_0$, and the function
$M_0\mapsto T_\ast(M_0)$ is continuous, strictly decreasing, and converges
to zero as $M_0\to+\infty$.

Let $M_\ast$ be the threshold fixed in the iterative construction and set
\begin{equation*}
T_0
:=
\min\left\{
\frac12,T_\ast(M_\ast)
\right\}.
\end{equation*}
For every $T\in(0,T_0)$, choose $M_0=K^2/T^2$. Since
$T<T_\ast(M_\ast)=K/\sqrt{M_\ast}$, we have $M_0>M_\ast$. Moreover,
$T_\ast(M_0)=T$ and $\sqrt{M_0}=K/T$. Therefore,
\eqref{eq:null-at-Tstar-LR} gives $u(T)=0$ and $\theta(T)=0$.

Finally, since $K$ is fixed, \eqref{eq:global-cost-M0-theorem} yields
\begin{equation*}
\begin{aligned}
\|h\|_{L^2(0,T;L^2(\omega)^N)}
+
\|h_\theta\|_{L^2(0,T;L^2(\omega))}
\leq
C\exp\left(\frac{C}{T}\right)
\left(
\|u_0\|_{\mathbf H}
+
\|\theta_0\|_{L^2}
\right).
\end{aligned}
\end{equation*}
This proves \eqref{eq:quantitative-control-cost}.
\end{proof}

\begin{rmk}
\label{rem:any-final-time}
The restriction $T\in(0,T_0)$ in
\Cref{thm:quantitative-linear-controllability} is used to describe the
small-time control cost. The null controllability result itself holds for
every $T>0$. Indeed, let
$
\widehat T
:=
\min\left\{
\frac{T}{2},\frac{T_0}{2}
\right\}.
$
Applying \Cref{thm:quantitative-linear-controllability} on
$(0,\widehat T)$ and extending the controls by zero on
$(\widehat T,T)$, we obtain a solution that vanishes at time $\widehat T$.
By linearity and uniqueness for the homogeneous system, it remains identically zero on $[\widehat T,T]$ and, in particular,
$u(T)=\theta(T)=0$.
\end{rmk}

\section{Proof of the nonlinear controllability result}
\label{sec:nonlinear}

In this section, we prove
\Cref{thm:main-nonlinear-controllability}. The two dimensions are treated
separately. We first consider the three-dimensional case, which requires more
attention because the nonlinear system is only locally well posed in the
space used in the iteration. We then consider the two-dimensional case. There,
the global existence and uniqueness of weak solutions allow us to apply the
same argument in the energy space $\mathbb X^0$, leading to a slightly
stronger result.

We use the time-iteration argument introduced in \cite{Mar26}. This method
transfers both null controllability and the corresponding control cost from
the linearized system to the nonlinear one by means of standard energy and
interpolation estimates. In particular, it avoids the construction of the
more elaborate weighted spaces used in the source term method of
\cite{LTT13}.

The argument in \cite{Mar26} is presented for globally well-posed equations.
It is also observed there that the method can be applied to locally
well-posed systems, provided that the existence of the nonlinear solution is
ensured at every step of the iteration, although the details of this extension
are not developed. We provide these details for the three-dimensional
Boussinesq system. We include the argument for completeness and the reader's
convenience, making no claim of novelty concerning the time-iteration
mechanism or its extension to locally well-posed equations. The point that
requires some care is to verify that the smallness conditions needed for the
existence of the nonlinear solution are preserved throughout the iteration.

We begin with the three-dimensional case.

\subsection{The nonlinear and linearized solution maps in dimension three}

Let $T\in(0,1)$, $y_0=(u_0,\theta_0)\in\mathbb X^1$, and
\begin{equation*}
h=(h_1,h_2)\in L^2(0,T;L^2(\omega)\times L^2(\omega)).
\end{equation*}
Here $h_1$ acts on the first component of the velocity equation, while $h_2$
acts on the temperature equation.

We denote by $S_t(y_0,h)$, $t\in[0,T]$, the solution map associated with the
nonlinear system
\begin{equation}
\label{eq:nonlinear-boussinesq-3d}
\begin{cases}
u_t+\nu\mathbf A u+\mathbf B(u,u)
=
\Pi(\theta e_3)+\Pi(\chi_\omega h_1e_1)
& t\in(0,T),\\
\theta_t+\kappa A\theta+u\cdot\nabla\theta
=
\chi_\omega h_2
& t\in(0,T),\\
(u(0),\theta(0))=(u_0,\theta_0).
\end{cases}
\end{equation}
More precisely, whenever the solution to
\eqref{eq:nonlinear-boussinesq-3d} is well defined on $[0,T]$, we set
\begin{equation*}
S_t(y_0,h):=(u(t),\theta(t)),
\qquad
t\in[0,T].
\end{equation*}

Similarly, we denote by $\Sigma_t(y_0,h)$, $t\in[0,T]$, the solution map
associated with the linearized system
\begin{equation}
\label{eq:linear-boussinesq-3d}
\begin{cases}
z_t+\nu\mathbf A z
=
\Pi(\xi e_3)+\Pi(\chi_\omega h_1e_1)
& t\in(0,T),\\
\xi_t+\kappa A\xi
=
\chi_\omega h_2
& t\in(0,T),\\
(z(0),\xi(0))=(u_0,\theta_0).
\end{cases}
\end{equation}
That is,
\begin{equation*}
\Sigma_t(y_0,h):=(z(t),\xi(t)),
\qquad
t\in[0,T].
\end{equation*}

We first give a consequence of
\Cref{thm:quantitative-linear-controllability} for initial data in
$\mathbb X^1$.

\begin{cor}
\label{cor:quantitative-control-cost-strong}
Assume that $N=3$. There exist $T_0\in(0,1)$ and $C>0$ such that, for every
$T\in(0,T_0)$ and every $y_0\in\mathbb X^1$, there exists a control
\begin{equation*}
h=(h_1,h_2)\in L^2(0,T;L^2(\omega)\times L^2(\omega))
\end{equation*}
such that
\begin{equation*}
\Sigma_T(y_0,h)=0
\end{equation*}
and
\begin{equation}
\label{eq:quantitative-control-cost-strong}
\|h\|_{L^2(0,T;L^2(\omega)\times L^2(\omega))}
\leq
C\exp\left(\frac{C}{T}\right)\|y_0\|_{\mathbb X^1}.
\end{equation}
Moreover, the corresponding solution $(z,\xi)$ satisfies
\begin{equation*}
(z,\xi)\in
C([0,T];\mathbb X^1)\cap L^2(0,T;\mathbb X^2).
\end{equation*}
\end{cor}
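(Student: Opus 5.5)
The plan is to deduce the corollary directly from \Cref{thm:quantitative-linear-controllability} together with part (ii) of \Cref{thm:linear-wellposedness}. We take $T_0$ and $C$ as in \Cref{thm:quantitative-linear-controllability}, with $T_0<1$ so that \Cref{thm:linear-wellposedness} applies on $(0,T)$.

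\emph{Step 1: the control.} Let $T\in(0,T_0)$ and $y_0=(u_0,\theta_0)\in\mathbb X^1$. Since $\mathbb X^1\subset\mathbb X^0$ continuously, \Cref{thm:quantitative-linear-controllability} gives controls $(h,h_\theta)$ with $h_2\equiv h_3\equiv0$ that drive the solution of \eqref{eq:controlled-linear-system} to zero at time $T$. We set $h_1$ equal to the first component of $h$ and $h_2:=h_\theta$. Then $\Pi(\mathbf 1_\omega h)=\Pi(\chi_\omega h_1e_1)$, so \eqref{eq:controlled-linear-system} coincides with \eqref{eq:linear-boussinesq-3d}, and therefore $\Sigma_T(y_0,h)=0$.

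\emph{Step 2: the cost.} By Poincaré's inequality, $\|u_0\|_{\mathbf H}+\|\theta_0\|_{L^2}\le C\|y_0\|_{\mathbb X^1}$. Inserting this into \eqref{eq:quantitative-control-cost} gives \eqref{eq:quantitative-control-cost-strong}, after enlarging $C$ (the $\sqrt2$ from passing between the sum and the product norm is absorbed as well).

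\emph{Step 3: the regularity.} Take $F:=\Pi(\chi_\omega h_1e_1)\in L^2(0,T;\mathbf H)$ and $f:=\chi_\omega h_2\in L^2(0,T;L^2(\Omega))$, using $\|\Pi\|\le1$. With initial datum $(u_0,\theta_0)\in\mathbb X^1$, part (ii) of \Cref{thm:linear-wellposedness} yields $(z,\xi)\in C([0,T];\mathbb X^1)\cap L^2(0,T;\mathbb X^2)$. This solution coincides with the $\mathbb X^0$-solution from \Cref{thm:quantitative-linear-controllability} by the uniqueness statement in part (i), so the null condition at time $T$ transfers to it.

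None of these steps is a real obstacle. The only point needing care is that the control coming from the $\mathbb X^0$ theory is merely $L^2$ in time. That is exactly the hypothesis part (ii) of \Cref{thm:linear-wellposedness} requires, so the upgrade to strong regularity follows immediately.
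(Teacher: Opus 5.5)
Your proposal is correct and follows essentially the same route as the paper: you apply the $\mathbb X^0$ quantitative linear result via the embedding $\mathbb X^1\hookrightarrow\mathbb X^0$, then use part (ii) of the linear well-posedness theorem together with uniqueness to upgrade the regularity and transfer $\Sigma_T(y_0,h)=0$. The identification of $(h_1,0,0)$ and $h_\theta$ with the pair $(h_1,h_2)$, and the Poincaré step for the cost, are details the paper leaves implicit; just note that your $h_2$ denotes the vanishing second velocity component in Step 1 and then the renamed temperature control.
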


\begin{proof}
Since $\mathbb X^1$ is continuously embedded in
$\mathbf H\times L^2(\Omega)$,
\Cref{thm:quantitative-linear-controllability} gives a control satisfying
\eqref{eq:quantitative-control-cost-strong} and driving the corresponding
solution to zero at time $T$. For the same initial datum and control, item
\textup{(ii)} of \Cref{thm:linear-wellposedness} gives a solution
\begin{equation*}
(z,\xi)\in
C([0,T];\mathbb X^1)\cap L^2(0,T;\mathbb X^2).
\end{equation*}
By uniqueness, both solutions coincide. In particular,
$\Sigma_T(y_0,h)=0$.
\end{proof}

We now compare the nonlinear and linearized solution maps. For small initial
data and controls, their difference at time $T$ is quadratic.

\begin{lem}
\label{lem:quadratic-perturbation-3d}
There exist constants $r_0>0$ and $C>0$, depending only on $\Omega$, $\nu$ and
$\kappa$, such that, for every $T\in(0,1)$, every
$y_0\in\mathbb X^1$, and every
$h\in L^2(0,T;L^2(\omega)\times L^2(\omega))$ satisfying
\begin{equation}
\label{eq:smallness-quadratic-perturbation-3d}
\|y_0\|_{\mathbb X^1}
+
\|h\|_{L^2(0,T;L^2(\omega)\times L^2(\omega))}
\leq r_0,
\end{equation}
the nonlinear solution $S_t(y_0,h)$ is well defined on $[0,T]$ and
\begin{equation}
\label{eq:quadratic-perturbation-3d}
\|S_T(y_0,h)-\Sigma_T(y_0,h)\|_{\mathbb X^1}
\leq
C
\left(
\|y_0\|_{\mathbb X^1}
+
\|h\|_{L^2(0,T;L^2(\omega)\times L^2(\omega))}
\right)^2.
\end{equation}
\end{lem}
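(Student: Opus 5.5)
We reduce the statement to \Cref{thm:small-data-strong-wellposedness-3d} and \Cref{thm:linear-wellposedness}, followed by an energy estimate for the difference of the two flows. Set
\begin{equation*}
\varepsilon:=\|y_0\|_{\mathbb X^1}+\|h\|_{L^2(0,T;L^2(\omega)\times L^2(\omega))}.
\end{equation*}
The forcing terms $F=\Pi(\chi_\omega h_1e_1)$ and $f=\chi_\omega h_2$ satisfy $\|F\|_{L^2(0,T;\mathbf H)}+\|f\|_{L^2(0,T;L^2)}\leq\|h\|$, because $\Pi$ is an orthogonal projection. Hence, if we choose $r_0\leq r_\ast$, \Cref{thm:small-data-strong-wellposedness-3d} shows that $(u,\theta)=S_t(y_0,h)$ exists on $[0,T]$ and satisfies
\begin{equation*}
\|(u,\theta)\|_{C([0,T];\mathbb X^1)}^2+\|(u,\theta)\|_{L^2(0,T;\mathbb X^2)}^2\leq C\varepsilon^2.
\end{equation*}
Both constants $C$ and $r_\ast$ are uniform in $T\in(0,1)$. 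Item (ii) of \Cref{thm:linear-wellposedness} gives the same bound for $(z,\xi)=\Sigma_t(y_0,h)$.

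Next we set $w:=u-z$ and $\eta:=\theta-\xi$. The pair $(w,\eta)$ solves the linearized system \eqref{eq:linear-abstract} with zero initial datum and source terms
\begin{equation*}
F=-\mathbf B(u,u),\qquad f=-u\cdot\nabla\theta .
\end{equation*}
The controls cancel in the difference. Item (ii) of \Cref{thm:linear-wellposedness} then yields
\begin{equation*}
\|(w,\eta)(T)\|_{\mathbb X^1}^2\leq C\left(\|\mathbf B(u,u)\|_{L^2(0,T;\mathbf H)}^2+\|u\cdot\nabla\theta\|_{L^2(0,T;L^2)}^2\right),
\end{equation*}
again with a constant uniform in $T\in(0,1)$. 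To verify that these sources lie in $L^2(0,T;\mathbf H)\times L^2(0,T;L^2)$, and to bound them, we use \Cref{lem:basic-estimates-3d}. For the velocity term,
\begin{equation*}
\|\mathbf B(u,u)\|_{\mathbf H}\leq\|u\|_{L^6}\|\nabla u\|_{L^3}\leq C\|u\|_{\mathbf V}^{3/2}\|\mathbf Au\|_{\mathbf H}^{1/2}.
\end{equation*}
Squaring and integrating in time gives
\begin{equation*}
\int_0^T\|\mathbf B(u,u)\|_{\mathbf H}^2\dt\leq C\sup_t\|u\|_{\mathbf V}^3\int_0^T\|\mathbf Au\|_{\mathbf H}\dt\leq C\sup_t\|u\|_{\mathbf V}^3\,T^{1/2}\|\mathbf Au\|_{L^2(0,T;\mathbf H)}.
\end{equation*}
By the bounds of the first step, this is at most $C\varepsilon^4$, using $T<1$. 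The transport term is handled in the same way:
\begin{equation*}
\|u\cdot\nabla\theta\|_{L^2}\leq C\|u\|_{\mathbf V}\|\theta\|_{H^1_0}^{1/2}\|A\theta\|_{L^2}^{1/2},
\end{equation*}
so its $L^2(0,T;L^2)$ norm squared is also at most $C\varepsilon^4$. Taking square roots, $\|S_T-\Sigma_T\|_{\mathbb X^1}\leq C\varepsilon^2$, which is \eqref{eq:quadratic-perturbation-3d}.

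The main point requiring care is uniformity. All constants must be independent of $T$, which holds here because the constants in both well-posedness results are uniform for $T\in(0,1)$. The time integrals of $\|\mathbf Au\|_{\mathbf H}$ and $\|A\theta\|_{L^2}$ only produce harmless factors $T^{1/2}\leq1$, through Cauchy--Schwarz, rather than any blow-up.
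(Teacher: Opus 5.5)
Your proposal is correct and follows the paper's proof essentially step by step. Like the paper, you first use small-data strong well-posedness to obtain the nonlinear flow with a $T$-uniform bound. You then apply item (ii) of the linear well-posedness theorem to the difference $(w,\eta)$, which has zero initial datum and sources $-\mathbf B(u,u)$, $-u\cdot\nabla\theta$, and bound these sources quadratically via Lemma~\ref{lem:basic-estimates-3d} and Hölder in time with $T<1$.

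There is one harmless slip. With the product norm you only have $\|F\|_{L^2(0,T;\mathbf H)}+\|f\|_{L^2(0,T;L^2)}\leq\|h_1\|+\|h_2\|\leq\sqrt2\,\|h\|$, not $\leq\|h\|$. So $r_0\leq r_\ast$ must be replaced by a slightly smaller radius; the paper takes $(1+\sqrt2)r_0\leq r_\ast$.
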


\begin{proof}
Let $r_\ast>0$ be given by
\Cref{thm:small-data-strong-wellposedness-3d}. Since
\begin{equation*}
\|\Pi(\chi_\omega h_1e_1)\|_{L^2(0,T;\mathbf H)}
+
\|\chi_\omega h_2\|_{L^2(0,T;L^2)}
\leq
\sqrt2\,
\|h\|_{L^2(0,T;L^2(\omega)\times L^2(\omega))},
\end{equation*}
we choose $r_0>0$, independently of $T$, such that
$(1+\sqrt2)r_0\leq r_\ast$. Condition
\eqref{eq:smallness-quadratic-perturbation-3d} then implies the smallness
condition in \Cref{thm:small-data-strong-wellposedness-3d}. Therefore, the
solution to \eqref{eq:nonlinear-boussinesq-3d} is well defined on $[0,T]$ and
satisfies
\begin{equation}
\label{eq:nonlinear-ZT-bound-3d}
\|(u,\theta)\|_{C([0,T];\mathbb X^1)}
+
\|(u,\theta)\|_{L^2(0,T;\mathbb X^2)}
\leq
C
\left(
\|y_0\|_{\mathbb X^1}
+
\|h\|_{L^2(0,T;L^2(\omega)\times L^2(\omega))}
\right).
\end{equation}

Set $w:=u-z$ and $\eta:=\theta-\xi$. Then $(w,\eta)$ satisfies
\begin{equation}
\label{eq:difference-system-nonlinear-linear-3d}
\begin{cases}
w_t+\nu\mathbf A w
=
\Pi(\eta e_3)-\mathbf B(u,u)
& t\in(0,T),\\
\eta_t+\kappa A\eta
=
-u\cdot\nabla\theta
& t\in(0,T),\\
(w,\eta)(0)=(0,0).
\end{cases}
\end{equation}
Applying item \textup{(ii)} of \Cref{thm:linear-wellposedness} to
\eqref{eq:difference-system-nonlinear-linear-3d}, we obtain
\begin{equation}
\label{eq:difference-linear-strong-bound}
\|(w,\eta)\|_{C([0,T];\mathbb X^1)}
+
\|(w,\eta)\|_{L^2(0,T;\mathbb X^2)}
\leq
C
\left(
\|\mathbf B(u,u)\|_{L^2(0,T;\mathbf H)}
+
\|u\cdot\nabla\theta\|_{L^2(0,T;L^2)}
\right).
\end{equation}

Using items \textup{(i)} and \textup{(ii)} of
\Cref{lem:basic-estimates-3d}, we have
\begin{equation*}
\|\mathbf B(u,u)\|_{\mathbf H}
\leq
C\|u\|_{L^6}\|\nabla u\|_{L^3}
\leq
C\|u\|_{\mathbf V}^{3/2}
\|\mathbf A u\|_{\mathbf H}^{1/2}.
\end{equation*}
Hence, by Hölder's inequality and since $T\in(0,1)$, we have
\begin{equation}
\label{eq:Buu-L2H-bound-3d}
\|\mathbf B(u,u)\|_{L^2(0,T;\mathbf H)}
\leq
C
\|u\|_{C([0,T];\mathbf V)}^{3/2}
\|\mathbf A u\|_{L^2(0,T;\mathbf H)}^{1/2}.
\end{equation}
Similarly, using items \textup{(i)} and \textup{(iii)} of
\Cref{lem:basic-estimates-3d}, we obtain
\begin{equation}
\label{eq:utheta-L2-bound-3d}
\|u\cdot\nabla\theta\|_{L^2(0,T;L^2)}
\leq
C
\|u\|_{C([0,T];\mathbf V)}
\|\theta\|_{C([0,T];H^1_0)}^{1/2}
\|A\theta\|_{L^2(0,T;L^2)}^{1/2}.
\end{equation}

Combining \eqref{eq:Buu-L2H-bound-3d}--\eqref{eq:utheta-L2-bound-3d} with
\eqref{eq:nonlinear-ZT-bound-3d}, we get
\begin{equation}
\label{eq:nonlinear-forcing-quadratic-bound-3d}
\|\mathbf B(u,u)\|_{L^2(0,T;\mathbf H)}
+
\|u\cdot\nabla\theta\|_{L^2(0,T;L^2)}
\leq
C
\left(
\|y_0\|_{\mathbb X^1}
+
\|h\|_{L^2(0,T;L^2(\omega)\times L^2(\omega))}
\right)^2.
\end{equation}
Thus, from \eqref{eq:difference-linear-strong-bound},
\begin{equation*}
\|(w,\eta)\|_{C([0,T];\mathbb X^1)}
\leq
C
\left(
\|y_0\|_{\mathbb X^1}
+
\|h\|_{L^2(0,T;L^2(\omega)\times L^2(\omega))}
\right)^2.
\end{equation*}
Since
$S_T(y_0,h)-\Sigma_T(y_0,h)=(w(T),\eta(T))$, this proves
\eqref{eq:quadratic-perturbation-3d}.
\end{proof}

\subsection{The time-iteration argument in dimension three}

We now combine \Cref{cor:quantitative-control-cost-strong} with
\Cref{lem:quadratic-perturbation-3d} to prove the nonlinear controllability
result in dimension three.

\begin{prop}
\label{prop:time-iteration-3d}
Let $T_0\in(0,1)$ be as in
\Cref{cor:quantitative-control-cost-strong}. For every $T\in(0,T_0)$, there
exists $\delta_T>0$ such that, for every
$y_0=(u_0,\theta_0)\in\mathbb X^1$ satisfying
\begin{equation*}
\|y_0\|_{\mathbb X^1}\leq\delta_T,
\end{equation*}
there exists a control
\begin{equation*}
h=(h_1,h_2)\in L^2(0,T;L^2(\omega)\times L^2(\omega))
\end{equation*}
such that the corresponding controlled solution $(u,\theta)$ of
\eqref{eq:nonlinear-boussinesq-3d} is well defined on $[0,T]$ and satisfies
\begin{equation*}
u(T)=0,
\qquad
\theta(T)=0.
\end{equation*}
Moreover, the control can be chosen so that
\begin{equation}
\label{eq:nonlinear-control-cost-3d}
\|h\|_{L^2(0,T;L^2(\omega)\times L^2(\omega))}
\leq
C\exp\left(\frac{C}{T}\right)
\|y_0\|_{\mathbb X^1},
\end{equation}
where $C>0$ is independent of $T$ and $y_0$.
\end{prop}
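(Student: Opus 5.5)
We follow the time-iteration scheme of \cite{Mar26}. Fix $T\in(0,T_0)$ and split $(0,T)$ into consecutive intervals of lengths $T_k:=T(1-a)a^k$, $k\in\mathbb N$, for a fixed $a\in(0,1)$ (say $a=1/2$), with $s_0=0$ and $s_{k+1}=s_k+T_k$, so that $s_k\uparrow T$. Starting from $y^0:=y_0$, we apply \Cref{cor:quantitative-control-cost-strong} on $(s_k,s_{k+1})$ (translated in time, using autonomy) to the datum $y^k$. This gives a control $h^k$ with $\Sigma_{T_k}(y^k,h^k)=0$ and
\begin{equation*}
\|h^k\|_{L^2(s_k,s_{k+1};L^2(\omega)^2)}\leq Ce^{C/T_k}\|y^k\|_{\mathbb X^1}.
\end{equation*}
We then apply the same control to the nonlinear system and set $y^{k+1}:=S_{T_k}(y^k,h^k)$. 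Since the linear solution vanishes at the endpoint, \Cref{lem:quadratic-perturbation-3d} yields
\begin{equation*}
\|y^{k+1}\|_{\mathbb X^1}=\|S_{T_k}(y^k,h^k)-\Sigma_{T_k}(y^k,h^k)\|_{\mathbb X^1}\leq C\bigl(1+Ce^{C/T_k}\bigr)^2\|y^k\|_{\mathbb X^1}^2\leq C_2e^{2C/T_k}\|y^k\|_{\mathbb X^1}^2,
\end{equation*}
provided $(1+Ce^{C/T_k})\|y^k\|_{\mathbb X^1}\leq r_0$. This proviso is the condition that keeps the nonlinear solution defined on each step.

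The main point is to show by induction that the smallness hypothesis is preserved and that $\|y^k\|_{\mathbb X^1}$ decays fast enough to beat the growing factor $e^{C/T_k}=e^{C a^{-k}/(T(1-a))}$. We look for a bound of the form
\begin{equation*}
\|y^k\|_{\mathbb X^1}\leq e^{-\beta a^{-k}/T}\,\epsilon_k,
\end{equation*}
or, more simply, set $b_k:=\log\|y^k\|_{\mathbb X^1}$, which satisfies $b_{k+1}\leq 2b_k+\log C_2+2Ca^{-k}/(T(1-a))$. With $a=1/2$ the inhomogeneity grows like $2^k$, the same rate as the homogeneous part. Hence $a$ should be chosen with $a^{-1}<2$, say $a=2^{-1/2}$. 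Then $2^{-k}b_k\leq b_0+\sum_j 2^{-j-1}\bigl(\log C_2+2C a^{-j}/(T(1-a))\bigr)\leq b_0+C'/T$, the sum converging since $a^{-1}/2<1$. So if $\|y_0\|_{\mathbb X^1}\leq\delta_T:=e^{-C''/T}$ with $C''>C'$ large, we get $b_k\leq -2^k(C''-C')/T$, i.e.\ doubly exponential decay $\|y^k\|\leq e^{-c2^k/T}$. In parallel we check, at each induction step, that $(1+Ce^{C/T_k})\|y^k\|\leq Ce^{Ca^{-k}/T}e^{-c2^k/T}\leq r_0$. This holds because $2^k$ dominates $a^{-k}$, after enlarging $C''$ to absorb $k=0$. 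This bookkeeping of constants is the step I expect to need the most care; the rest is routine.

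For the control cost, sum the pieces:
\begin{equation*}
\sum_k\|h^k\|\leq\sum_k Ce^{C/T_k}\|y^k\|_{\mathbb X^1}.
\end{equation*}
The $k=0$ term is $Ce^{C/((1-a)T)}\|y_0\|$. For $k\geq1$, use the recursion to write $\|y^k\|\leq \|y_0\|\cdot\prod$ of factors, bounding $\|y^k\|\leq \|y_0\|\,e^{-c2^k/T+C/T}$ (extracting one power of $\|y_0\|$ from the quadratic recursion). The series $\sum_k e^{Ca^{-k}/T-c2^k/T}$ is then bounded by $Ce^{C/T}$. This gives \eqref{eq:nonlinear-control-cost-3d}, and in particular $h\in L^2(0,T;L^2(\omega)^2)$.

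Finally, the concatenated control $h$ has small norm. By \Cref{thm:small-data-strong-wellposedness-3d} (applicable since $\delta_T$ is small) and uniqueness, the nonlinear solution on $[0,T]$ coincides on each $[s_k,s_{k+1}]$ with the step solutions, so $(u,\theta)(s_k)=y^k\to0$ in $\mathbb X^1$. By continuity in $C([0,T];\mathbb X^1)$, we conclude $u(T)=0$ and $\theta(T)=0$.
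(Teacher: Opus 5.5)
Your proposal is correct and follows essentially the same route as the paper: the partition $\tau_j=T(1-\rho)\rho^j$ with $\rho=2^{-1/2}$, the quadratic recursion from \Cref{lem:quadratic-perturbation-3d}, the $2^{-k}$-normalized logarithmic recursion giving doubly exponential decay once $\|y_0\|_{\mathbb X^1}\leq e^{-C/T}$, the inductive check of the per-step smallness, and extracting one factor $\|y_0\|_{\mathbb X^1}$ to obtain the $C\exp(C/T)$ cost. The only harmless variation is at the end: you get the solution on $[0,T]$ directly from \Cref{thm:small-data-strong-wellposedness-3d}, after possibly shrinking $\delta_T$ so that $\|y_0\|_{\mathbb X^1}+\sqrt2\,\|h\|\leq r_\ast$, and identify it with the step solutions by uniqueness, whereas the paper concatenates the step solutions and uses the local estimates to get continuity at $T$.
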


\begin{proof}
The proof is divided into four steps. Set $\rho:=2^{-1/2}$ and, for a fixed
$T\in(0,T_0)$, define
\begin{equation*}
\tau_j:=T(1-\rho)\rho^j,
\qquad
j\in\mathbb N,
\end{equation*}
and
\begin{equation}
\label{eq:def_sjs_Ijs}
s_0:=0,
\qquad
s_{j+1}:=s_j+\tau_j,
\qquad
I_j:=(s_j,s_{j+1}).
\end{equation}
Note that $\sum_{j=0}^{\infty}\tau_j=T$.

\smallskip
\textit{-- Step 1: A linear control on each time interval.}
We construct inductively a sequence of states
$(Y_j)_{j\in\mathbb N}\subset\mathbb X^1$ and controls
\begin{equation*}
h_j\in L^2(0,\tau_j;L^2(\omega)\times L^2(\omega)).
\end{equation*}
Set $Y_0:=y_0$ and assume that $Y_j\in\mathbb X^1$ has been constructed. If
$Y_j=0$, we take $h_j=0$ and set $Y_{j+1}=0$. Otherwise, since $\tau_j<T<T_0$,
\Cref{cor:quantitative-control-cost-strong} gives a control $h_j$ such that
\begin{equation}
\label{eq:linear-step-j-3d}
\Sigma_{\tau_j}(Y_j,h_j)=0
\qquad\text{and}\qquad
\|h_j\|_{L^2(0,\tau_j;L^2(\omega)\times L^2(\omega))}
\leq
C_L\exp\left(\frac{C_L}{\tau_j}\right)
\|Y_j\|_{\mathbb X^1},
\end{equation}
for some $C_L>0$ independent of $j$, $T$, and $Y_j$.

Let $r_0>0$ be the radius in
\Cref{lem:quadratic-perturbation-3d}. Whenever
\begin{equation}
\label{eq:local-smallness-slice-3d}
\|Y_j\|_{\mathbb X^1}
+
\|h_j\|_{L^2(0,\tau_j;L^2(\omega)\times L^2(\omega))}
\leq r_0,
\end{equation}
we define
\begin{equation}
\label{eq:Yj-plus-one-definition}
Y_{j+1}:=S_{\tau_j}(Y_j,h_j).
\end{equation}
The construction is continued as long as
\eqref{eq:local-smallness-slice-3d} holds. In Step 3, we choose the initial
datum sufficiently small and verify inductively that this condition is
satisfied for every $j\in\mathbb N$.

\smallskip
\textit{-- Step 2: Nonlinear contraction.}
Assume that the construction has been carried out up to the index $j$. Let
$C_N>0$ be the constant provided by
\Cref{lem:quadratic-perturbation-3d}. Since
$\Sigma_{\tau_j}(Y_j,h_j)=0$, estimate
\eqref{eq:quadratic-perturbation-3d} and
\eqref{eq:Yj-plus-one-definition} give
\begin{equation*}
\|Y_{j+1}\|_{\mathbb X^1}
\leq
C_N
\left(
\|Y_j\|_{\mathbb X^1}
+
\|h_j\|_{L^2(0,\tau_j;L^2(\omega)\times L^2(\omega))}
\right)^2.
\end{equation*}
We choose $K_0\geq\max\{1,C_L\}$, independently of $\tau$, sufficiently large
so that
\begin{equation}
\label{eq:K0-choice-3d}
C_N
\left(
1+C_L\exp\left(\frac{C_L}{\tau}\right)
\right)^2
\leq
K_0\exp\left(\frac{K_0}{\tau}\right),
\qquad
\tau>0.
\end{equation}
Combining \eqref{eq:linear-step-j-3d} and
\eqref{eq:K0-choice-3d}, we obtain
\begin{equation}
\label{eq:one-step-recursion-3d}
\|Y_{j+1}\|_{\mathbb X^1}
\leq
K_0\exp\left(\frac{K_0}{\tau_j}\right)
\|Y_j\|_{\mathbb X^1}^2.
\end{equation}

If some $Y_j$ vanishes, the construction is completed by taking all the
remaining controls equal to zero. Hence, for the following estimates, we may
assume that $Y_j\neq0$. Taking logarithms in
\eqref{eq:one-step-recursion-3d} and dividing by $2^{j+1}$, we get
\begin{equation}
\label{eq:log-recursion-3d}
\frac{\log\|Y_{j+1}\|_{\mathbb X^1}}{2^{j+1}}
\leq
\frac{\log\|Y_j\|_{\mathbb X^1}}{2^j}
+
\frac{\log K_0}{2^{j+1}}
+
\frac{K_0}{2^{j+1}\tau_j}.
\end{equation}
Set
\begin{equation*}
A_j:=
\frac{\log\|Y_j\|_{\mathbb X^1}}{2^j}.
\end{equation*}
Summing \eqref{eq:log-recursion-3d} from $j=0$ to $j=n-1$, we obtain
\begin{equation}
\label{eq:sum_An}
A_n
\leq
A_0
+
\sum_{j=0}^{n-1}\frac{\log K_0}{2^{j+1}}
+
\sum_{j=0}^{n-1}\frac{K_0}{2^{j+1}\tau_j}.
\end{equation}
Since $A_0=\log\|y_0\|_{\mathbb X^1}$,
$\sum_{j=0}^{\infty}2^{-(j+1)}=1$, and
\begin{equation*}
\begin{aligned}
\sum_{j=0}^{\infty}\frac{1}{2^{j+1}\tau_j}
&=
\frac{1}{2T(1-\rho)}
\sum_{j=0}^{\infty}
\left(\frac{\rho^{-1}}{2}\right)^j
\\
&=
\frac{1}{2T(1-\rho)}
\sum_{j=0}^{\infty}\rho^j
=
\frac{1}{2T(1-\rho)^2},
\end{aligned}
\end{equation*}
we deduce from \eqref{eq:sum_An} that
\begin{equation}
\label{eq:log-bound-3d}
\frac{\log\|Y_n\|_{\mathbb X^1}}{2^n}
\leq
\log\|y_0\|_{\mathbb X^1}+C_T,
\end{equation}
where
\begin{equation*}
C_T:=
\log K_0+\frac{K_0}{2T(1-\rho)^2}.
\end{equation*}

Let
\begin{equation}
\label{eq:M0-definition-3d}
M_0:=
\max\left\{0,\log\left(\frac{2}{r_0}\right)\right\},
\end{equation}
and define
\begin{equation}
\label{eq:MT-definition-3d}
M_T:=
\log2+\frac{K_0}{\tau_0}+M_0.
\end{equation}
Then $e^{-M_T}\leq r_0/2$. Moreover, for every $j\in\mathbb N$,
\begin{equation}
\label{eq:MT-choice-3d}
(2^j-1)M_T
\geq
j\log2
+
K_0\left(\frac1{\tau_j}-\frac1{\tau_0}\right).
\end{equation}
Indeed, $j\leq2^j-1$ and
\begin{equation*}
\frac1{\tau_j}-\frac1{\tau_0}
=
\frac{1}{\tau_0}(\rho^{-j}-1)
=
\frac{1}{\tau_0}(2^{j/2}-1)
\leq
\frac{1}{\tau_0}(2^j-1).
\end{equation*}

Assume now that
\begin{equation}
\label{eq:smallness-initial-3d}
\|y_0\|_{\mathbb X^1}
\leq
e^{-M_T-C_T}.
\end{equation}
It follows from \eqref{eq:log-bound-3d} that
\begin{equation}
\label{eq:double-exp-3d}
\|Y_j\|_{\mathbb X^1}
\leq
e^{-M_T2^j},
\qquad
j\in\mathbb N.
\end{equation}
In particular, $Y_j\to0$ in $\mathbb X^1$ as $j\to+\infty$.

\smallskip
\textit{-- Step 3: Control cost and validity of the construction.}
From \eqref{eq:linear-step-j-3d} and \eqref{eq:log-bound-3d}, we have
\begin{equation*}
\begin{aligned}
\log
\frac{
\|h_j\|_{L^2(0,\tau_j;L^2(\omega)\times L^2(\omega))}
}{
\|y_0\|_{\mathbb X^1}
}
&\leq
\log C_L
+
\frac{C_L}{\tau_j}
+
(2^j-1)\log\|y_0\|_{\mathbb X^1}
+
2^jC_T.
\end{aligned}
\end{equation*}
Using \eqref{eq:smallness-initial-3d}, we get
\begin{equation*}
\begin{aligned}
\log
\frac{
\|h_j\|_{L^2(0,\tau_j;L^2(\omega)\times L^2(\omega))}
}{
\|y_0\|_{\mathbb X^1}
}
&\leq
\log C_L
+
\frac{C_L}{\tau_j}
-
(2^j-1)M_T
+
C_T.
\end{aligned}
\end{equation*}
Then, from \eqref{eq:MT-choice-3d}, we deduce
\begin{equation*}
\begin{aligned}
\log
\frac{
\|h_j\|_{L^2(0,\tau_j;L^2(\omega)\times L^2(\omega))}
}{
\|y_0\|_{\mathbb X^1}
}
&\leq
\log C_L
+
\frac{C_L}{\tau_j}
+
C_T
-
j\log2
-
K_0\left(\frac1{\tau_j}-\frac1{\tau_0}\right)
\\
&\leq
\log C_L
+
C_T
-
j\log2
+
\frac{K_0}{\tau_0},
\end{aligned}
\end{equation*}
where we have used $K_0\geq C_L$. Therefore,
\begin{equation}
\label{eq:hj-geometric-bound-3d}
\|h_j\|_{L^2(0,\tau_j;L^2(\omega)\times L^2(\omega))}
\leq
2^{-j}C_L
\exp\left(
C_T+\frac{K_0}{\tau_0}
\right)
\|y_0\|_{\mathbb X^1}.
\end{equation}
Since $\tau_0=T(1-\rho)$ and $C_T\leq C/T$, we deduce
\begin{equation}
\label{eq:sum-controls-cost-3d}
\sum_{j=0}^{\infty}
\|h_j\|_{L^2(0,\tau_j;L^2(\omega)\times L^2(\omega))}
\leq
C\exp\left(\frac{C}{T}\right)
\|y_0\|_{\mathbb X^1},
\end{equation}
where $C>0$ is independent of $T$.

We choose $\delta_T>0$ sufficiently small so that
\begin{equation}
\label{eq:deltaT-choice-3d}
\delta_T\leq e^{-M_T-C_T}
\qquad\text{and}\qquad
C_L
\exp\left(
C_T+\frac{K_0}{\tau_0}
\right)
\delta_T
\leq
\frac{r_0}{2}.
\end{equation}
If $\|y_0\|_{\mathbb X^1}\leq\delta_T$, then
\eqref{eq:double-exp-3d}, \eqref{eq:hj-geometric-bound-3d}, and the definitions
of $M_0$ and $M_T$ give
\begin{equation}
\label{eq:small_construction}
\|Y_j\|_{\mathbb X^1}
\leq
\frac{r_0}{2},
\qquad
\|h_j\|_{L^2(0,\tau_j;L^2(\omega)\times L^2(\omega))}
\leq
2^{-j}\frac{r_0}{2}
\leq
\frac{r_0}{2}.
\end{equation}
These estimates are valid at every index for which the construction is
defined. At $j=0$, they imply
\eqref{eq:local-smallness-slice-3d}, and hence $Y_1$ is well defined. Assuming
that the construction has been carried out up to the index $j$, the same
estimates imply \eqref{eq:local-smallness-slice-3d} at that index and allow us
to define $Y_{j+1}$. Consequently, the construction can be continued
inductively for every $j\in\mathbb N$.

\smallskip
\textit{-- Step 4: Conclusion.}
For $t\in I_j$, we define
\begin{equation*}
h(t):=h_j(t-s_j).
\end{equation*}
Then
\begin{equation*}
\|h\|_{L^2(0,T;L^2(\omega)\times L^2(\omega))}^2
=
\sum_{j=0}^{\infty}
\|h_j\|_{L^2(0,\tau_j;L^2(\omega)\times L^2(\omega))}^2.
\end{equation*}
Hence, from \eqref{eq:sum-controls-cost-3d},
\begin{equation}
\label{eq:global-nonlinear-control-cost-3d}
\|h\|_{L^2(0,T;L^2(\omega)\times L^2(\omega))}
\leq
C\exp\left(\frac{C}{T}\right)
\|y_0\|_{\mathbb X^1}.
\end{equation}

We now concatenate the corresponding nonlinear solutions. On each interval
$I_j$, we solve the nonlinear system with initial datum $Y_j$ at time $s_j$
and control $h_j(\cdot-s_j)$. By
\eqref{eq:local-smallness-slice-3d}, these solutions are well defined and
unique. They can therefore be concatenated to obtain a solution $(u,\theta)$
on $[0,T)$ satisfying
\begin{equation*}
(u(s_j),\theta(s_j))=Y_j,
\qquad
j\in\mathbb N.
\end{equation*}

The local well-posedness estimate on each interval $I_j$ gives
\begin{equation*}
\|(u,\theta)\|_{L^2(I_j;\mathbb X^2)}
\leq
C
\left(
\|Y_j\|_{\mathbb X^1}
+
\|h_j\|_{L^2(0,\tau_j;L^2(\omega)\times L^2(\omega))}
\right).
\end{equation*}
Consequently, by \eqref{eq:double-exp-3d} and
\eqref{eq:hj-geometric-bound-3d},
$
\sum_{j=0}^{\infty}
\|(u,\theta)\|_{L^2(I_j;\mathbb X^2)}^2
<+\infty,
$
and hence $(u,\theta)\in L^2(0,T;\mathbb X^2)$.

Let $t\in I_j$. The same local well-posedness estimate gives
\begin{equation*}
\|(u(t),\theta(t))\|_{\mathbb X^1}
\leq
C
\left(
\|Y_j\|_{\mathbb X^1}
+
\|h_j\|_{L^2(0,\tau_j;L^2(\omega)\times L^2(\omega))}
\right).
\end{equation*}
The right-hand side tends to zero as $j\to\infty$ by
\eqref{eq:double-exp-3d} and \eqref{eq:hj-geometric-bound-3d}. Since
$s_j\uparrow T$, we conclude that
\begin{equation*}
(u(t),\theta(t))\to(0,0)
\qquad
\text{in }\mathbb X^1
\quad\text{as }t\uparrow T.
\end{equation*}
Defining $(u(T),\theta(T))=(0,0)$, we obtain
\begin{equation*}
(u,\theta)
\in
C([0,T];\mathbb X^1)
\cap
L^2(0,T;\mathbb X^2).
\end{equation*}
Together with \eqref{eq:global-nonlinear-control-cost-3d}, this proves the
result.
\end{proof}

\subsection{Remarks on the two-dimensional case}

We now consider the case $N=2$. In this dimension, no control acts directly
on the velocity equation. For $y_0=(u_0,\theta_0)\in\mathbb X^0$ and
$h_2\in L^2(0,T;L^2(\omega))$, we consider the nonlinear system
\begin{equation}
\label{eq:nonlinear-boussinesq-2d}
\begin{cases}
u_t+\nu\mathbf A u+\mathbf B(u,u)
=
\Pi(\theta e_2)
& t\in(0,T),\\
\theta_t+\kappa A\theta+u\cdot\nabla\theta
=
\chi_\omega h_2
& t\in(0,T),\\
(u(0),\theta(0))=(u_0,\theta_0),
\end{cases}
\end{equation}
and the corresponding linearized system
\begin{equation}
\label{eq:linear-boussinesq-2d}
\begin{cases}
z_t+\nu\mathbf A z
=
\Pi(\xi e_2)
& t\in(0,T),\\
\xi_t+\kappa A\xi
=
\chi_\omega h_2
& t\in(0,T),\\
(z(0),\xi(0))=(u_0,\theta_0).
\end{cases}
\end{equation}
We denote their solution maps by
$S_t^{(2)}(y_0,h_2):=(u(t),\theta(t))$ and
$\Sigma_t^{(2)}(y_0,h_2):=(z(t),\xi(t))$.

By \Cref{rem:nonlinear-wellposedness-2d}, system
\eqref{eq:nonlinear-boussinesq-2d} has a unique global weak solution in
$C([0,T];\mathbb X^0)\cap L^2(0,T;\mathbb X^1)$. Moreover, for
$T\in(0,1)$, the cancellations in \eqref{eq:cancelations_2d} and Gronwall's
inequality give
\begin{equation}
\label{eq:nonlinear-energy-bound-2d}
\|(u,\theta)\|_{C([0,T];\mathbb X^0)}
+
\|(u,\theta)\|_{L^2(0,T;\mathbb X^1)}
\leq
C
\left(
\|y_0\|_{\mathbb X^0}
+
\|h_2\|_{L^2(0,T;L^2(\omega))}
\right),
\end{equation}
where $C>0$ is independent of $T$.

We first estimate the difference between the two solution maps in the energy
space.

\begin{lem}
\label{lem:quadratic-perturbation-2d}
There exists $C>0$, depending only on $\Omega$, $\nu$, and $\kappa$, such
that, for every $T\in(0,1)$, $y_0\in\mathbb X^0$, and
$h_2\in L^2(0,T;L^2(\omega))$,
\begin{equation}
\label{eq:quadratic-perturbation-2d}
\left\|
S_T^{(2)}(y_0,h_2)-\Sigma_T^{(2)}(y_0,h_2)
\right\|_{\mathbb X^0}
\leq
C
\left(
\|y_0\|_{\mathbb X^0}
+
\|h_2\|_{L^2(0,T;L^2(\omega))}
\right)^2.
\end{equation}
\end{lem}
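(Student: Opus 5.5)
We follow the proof of \Cref{lem:quadratic-perturbation-3d}, now working in the energy space $\mathbb X^0$. Set $w:=u-z$ and $\eta:=\theta-\xi$, where $(u,\theta)$ solves \eqref{eq:nonlinear-boussinesq-2d} and $(z,\xi)$ solves \eqref{eq:linear-boussinesq-2d}. The difference solves
\begin{equation*}
w_t+\nu\mathbf A w=\Pi(\eta e_2)-\mathbf B(u,u),\qquad
\eta_t+\kappa A\eta=-u\cdot\nabla\theta,\qquad (w,\eta)(0)=(0,0).
\end{equation*}
Item (i) of \Cref{thm:linear-wellposedness} only covers sources in $L^2(0,T;\mathbf H)$. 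The nonlinear sources, however, are only in $L^2(0,T;\mathbf V')\times L^2(0,T;H^{-1})$. So we do not invoke that item. Instead, we perform the energy estimate directly: test the first equation by $w$ and the second by $\eta$.

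The key observation is that $\mathbf B(u,u)$ and $u\cdot\nabla\theta$ are divergence forms, since $\nabla\cdot u=0$. Namely $\mathbf B(u,u)=\Pi\,\nabla\cdot(u\otimes u)$ and $u\cdot\nabla\theta=\nabla\cdot(u\theta)$. Integrating by parts gives
\begin{equation*}
|\langle\mathbf B(u,u),w\rangle|\leq\|u\|_{L^4}^2\|w\|_{\mathbf V},\qquad
\Bigl|\int_\Omega(u\cdot\nabla\theta)\eta\dx\Bigr|\leq\|u\|_{L^4}\|\theta\|_{L^4}\|\eta\|_{H^1_0}.
\end{equation*}
The coupling term is bounded by $\|\eta\|_{L^2}\|w\|_{\mathbf H}$. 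Young's inequality then yields
\begin{equation*}
\frac{\d}{\dt}\bigl(\|w\|_{\mathbf H}^2+\|\eta\|_{L^2}^2\bigr)\leq C\bigl(\|w\|_{\mathbf H}^2+\|\eta\|_{L^2}^2\bigr)+C\bigl(\|u\|_{L^4}^4+\|u\|_{L^4}^2\|\theta\|_{L^4}^2\bigr).
\end{equation*}
Gronwall's inequality on $(0,T)$ with $T<1$ gives constants independent of $T$. We obtain
\begin{equation*}
\|(w,\eta)(T)\|_{\mathbb X^0}^2\leq C\int_0^T\bigl(\|u\|_{L^4}^4+\|\theta\|_{L^4}^4\bigr)\dt.
\end{equation*}

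We bound the right-hand side with Ladyzhenskaya's inequality in two dimensions, $\|f\|_{L^4}^2\leq C\|f\|_{L^2}\|\nabla f\|_{L^2}$, which is the interpolation already recalled in \Cref{rem:nonlinear-wellposedness-2d}. It gives
\begin{equation*}
\int_0^T\|u\|_{L^4}^4\dt\leq C\|u\|_{L^\infty(0,T;\mathbf H)}^2\|u\|_{L^2(0,T;\mathbf V)}^2,
\end{equation*}
and similarly for $\theta$. By \eqref{eq:nonlinear-energy-bound-2d}, this is at most $C(\|y_0\|_{\mathbb X^0}+\|h_2\|_{L^2(0,T;L^2(\omega))})^4$. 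Taking square roots yields \eqref{eq:quadratic-perturbation-2d}, and no smallness assumption is needed.

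The main obstacle is rigor at the weak-solution level. The solution $(w,\eta)$ only has $\partial_t w\in L^2(0,T;\mathbf V')$, so the energy identity must be justified via the Lions--Magenes lemma. This is standard, because $w\in L^2(0,T;\mathbf V)$ and the sources lie in $L^2(0,T;\mathbf V')$ by the bounds above. Alternatively, one can run the estimate on Galerkin approximations and pass to the limit.
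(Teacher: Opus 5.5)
Your proposal is correct and follows essentially the same route as the paper: you test the difference system by $(w,\eta)$, use the divergence structure to bound the convective and transport terms through $L^4$ norms, apply Gronwall on $(0,T)$ with $T<1$, and close with Ladyzhenskaya's inequality and the energy bound \eqref{eq:nonlinear-energy-bound-2d}. The only differences are cosmetic. You split $\|u\|_{L^4}^2\|\theta\|_{L^4}^2$ into fourth powers via Young, whereas the paper bounds the product directly. You also explicitly flag the Lions--Magenes justification of the energy identity, which the paper leaves implicit.
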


\begin{proof}
Let $(u,\theta)$ and $(z,\xi)$ be the solutions of
\eqref{eq:nonlinear-boussinesq-2d} and
\eqref{eq:linear-boussinesq-2d}, respectively. Setting $w:=u-z$ and
$\eta:=\theta-\xi$, we have
\begin{equation}
\label{eq:difference-system-2d}
\begin{cases}
w_t+\nu\mathbf A w
=
\Pi(\eta e_2)-\mathbf B(u,u)
& t\in(0,T),\\
\eta_t+\kappa A\eta
=
-u\cdot\nabla\theta
& t\in(0,T),\\
(w,\eta)(0)=(0,0).
\end{cases}
\end{equation}
Taking the scalar product of the first equation with $w$ in $\mathbf H$ and
of the second equation with $\eta$ in $L^2(\Omega)$, we obtain
\begin{equation}
\label{eq:difference-energy-identity-2d}
\begin{aligned}
\frac12\frac{\d}{\dt}
\left(
\|w\|_{\mathbf H}^2+\|\eta\|_{L^2}^2
\right)
&+
\nu\|w\|_{\mathbf V}^2
+
\kappa\|\eta\|_{H^1_0}^2
\\
&=
\bigl(\Pi(\eta e_2),w\bigr)_{\mathbf H}
-
\langle\mathbf B(u,u),w\rangle_{\mathbf V',\mathbf V}
-
\int_\Omega(u\cdot\nabla\theta)\eta\dx.
\end{aligned}
\end{equation}
The coupling term is bounded by
$C(\|w\|_{\mathbf H}^2+\|\eta\|_{L^2}^2)$. On the other hand, the
two-dimensional interpolation inequality
$\|u\|_{L^4}^2\leq C\|u\|_{\mathbf H}\|u\|_{\mathbf V}$ gives
\begin{equation*}
\begin{aligned}
\left|
\langle\mathbf B(u,u),w\rangle_{\mathbf V',\mathbf V}
\right|
&\leq
C\|u\|_{L^4}^2\|w\|_{\mathbf V}
\leq
\frac{\nu}{4}\|w\|_{\mathbf V}^2
+
C\|u\|_{\mathbf H}^2\|u\|_{\mathbf V}^2.
\end{aligned}
\end{equation*}
Since $\nabla\cdot u=0$ and $u=0$ on $\partial\Omega$, integration by parts
also gives
\begin{equation*}
\begin{aligned}
\left|
\int_\Omega(u\cdot\nabla\theta)\eta\dx
\right|
&=
\left|
\int_\Omega(u\cdot\nabla\eta)\theta\dx
\right| \leq
\|u\|_{L^4}\|\theta\|_{L^4}\|\nabla\eta\|_{L^2}
\\
&\leq
\frac{\kappa}{4}\|\eta\|_{H^1_0}^2
+
C\|u\|_{L^4}^2\|\theta\|_{L^4}^2.
\end{aligned}
\end{equation*}
It follows from \eqref{eq:difference-energy-identity-2d} that, for some
$c_0>0$,
\begin{equation}
\label{eq:difference-energy-inequality-2d}
\begin{aligned}
\frac{\d}{\dt}
\left(
\|w\|_{\mathbf H}^2+\|\eta\|_{L^2}^2
\right)
+
c_0
\left(
\|w\|_{\mathbf V}^2+\|\eta\|_{H^1_0}^2
\right) 
&\leq
C
\left(
\|w\|_{\mathbf H}^2+\|\eta\|_{L^2}^2
\right)
+
C\|u\|_{\mathbf H}^2\|u\|_{\mathbf V}^2
\\
&\quad
+
C\|u\|_{L^4}^2\|\theta\|_{L^4}^2.
\end{aligned}
\end{equation}

Set $R:=
\|y_0\|_{\mathbb X^0}
+
\|h_2\|_{L^2(0,T;L^2(\omega))}.
$
Estimate \eqref{eq:nonlinear-energy-bound-2d} yields
\begin{equation*}
\int_0^T
\|u\|_{\mathbf H}^2\|u\|_{\mathbf V}^2\dt
\leq
\|u\|_{L^\infty(0,T;\mathbf H)}^2
\|u\|_{L^2(0,T;\mathbf V)}^2
\leq
CR^4.
\end{equation*}
Similarly, using
$\|\theta\|_{L^4}^2
\leq C\|\theta\|_{L^2}\|\theta\|_{H^1_0}$, we obtain
\begin{equation*}
\begin{aligned}
\int_0^T
\|u\|_{L^4}^2\|\theta\|_{L^4}^2\dt
&\leq
C
\|u\|_{L^\infty(0,T;\mathbf H)}
\|\theta\|_{L^\infty(0,T;L^2)}
\\
&\quad\times
\|u\|_{L^2(0,T;\mathbf V)}
\|\theta\|_{L^2(0,T;H^1_0)}
\leq
CR^4.
\end{aligned}
\end{equation*}
Since $(w,\eta)(0)=(0,0)$ and $T\in(0,1)$, Gronwall's inequality applied to
\eqref{eq:difference-energy-inequality-2d} gives
\begin{equation*}
\|(w,\eta)\|_{C([0,T];\mathbb X^0)}^2
\leq
CR^4.
\end{equation*}
In particular, $\|(w(T),\eta(T))\|_{\mathbb X^0}\leq CR^2$, which proves
\eqref{eq:quadratic-perturbation-2d}.
\end{proof}

We can now apply the time-iteration argument in $\mathbb X^0$.

\begin{prop}
\label{prop:time-iteration-2d}
Assume that $N=2$. There exist $T_0\in(0,1)$ and $C>0$ such that, for every
$T\in(0,T_0)$, there exists $\delta_T>0$ with the following property: for
every $y_0\in\mathbb X^0$ with $\|y_0\|_{\mathbb X^0}\leq\delta_T$, there
exists a control $h_2\in L^2(0,T;L^2(\omega))$ such that
\begin{equation*}
S_T^{(2)}(y_0,h_2)=0.
\end{equation*}
Moreover,
\begin{equation}
\label{eq:nonlinear-control-cost-2d}
\|h_2\|_{L^2(0,T;L^2(\omega))}
\leq
C\exp\left(\frac{C}{T}\right)\|y_0\|_{\mathbb X^0},
\end{equation}
where $C>0$ is independent of $T$ and $y_0$.
\end{prop}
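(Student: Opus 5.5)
We follow the proof of \Cref{prop:time-iteration-3d} step by step, replacing $\mathbb X^1$ by $\mathbb X^0$, \Cref{cor:quantitative-control-cost-strong} by \Cref{thm:quantitative-linear-controllability}, and \Cref{lem:quadratic-perturbation-3d} by \Cref{lem:quadratic-perturbation-2d}. For $N=2$, \Cref{thm:quantitative-linear-controllability} gives, for every $\tau\in(0,T_0)$ and every $Y\in\mathbb X^0$, a control $h_2$ acting only on the temperature equation (the vector control vanishes identically, since $h_{N-1}=h_1$ and $h_N=h_2$ are both zero). This control satisfies $\Sigma^{(2)}_\tau(Y,h_2)=0$ and
\begin{equation*}
\|h_2\|_{L^2(0,\tau;L^2(\omega))}\leq C_L e^{C_L/\tau}\|Y\|_{\mathbb X^0}.
\end{equation*}
We take $\rho=2^{-1/2}$, $\tau_j=T(1-\rho)\rho^j$ and $s_j$, $I_j$ as in \eqref{eq:def_sjs_Ijs}. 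We set $Y_0=y_0$, choose $h_j$ as the linear control on $(0,\tau_j)$ for the datum $Y_j$, and define $Y_{j+1}:=S^{(2)}_{\tau_j}(Y_j,h_j)$.

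A key simplification relative to the three-dimensional case is that no local smallness constraint such as \eqref{eq:local-smallness-slice-3d} is needed. By \Cref{rem:nonlinear-wellposedness-2d}, the nonlinear weak solution exists globally for any data, and \Cref{lem:quadratic-perturbation-2d} holds without smallness assumptions. Since $\Sigma^{(2)}_{\tau_j}(Y_j,h_j)=0$, that lemma and the control bound give
\begin{equation*}
\|Y_{j+1}\|_{\mathbb X^0}\leq C_N\bigl(1+C_Le^{C_L/\tau_j}\bigr)^2\|Y_j\|_{\mathbb X^0}^2\leq K_0e^{K_0/\tau_j}\|Y_j\|_{\mathbb X^0}^2,
\end{equation*}
with $K_0\geq\max\{1,C_L\}$ chosen as in \eqref{eq:K0-choice-3d}. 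The logarithmic summation then proceeds verbatim. With $A_j=2^{-j}\log\|Y_j\|_{\mathbb X^0}$ and $\sum_j (2^{j+1}\tau_j)^{-1}=(2T(1-\rho)^2)^{-1}$, we get $2^{-n}\log\|Y_n\|_{\mathbb X^0}\leq\log\|y_0\|_{\mathbb X^0}+C_T$, where $C_T=\log K_0+K_0/(2T(1-\rho)^2)$. We define $M_T=\log 2+K_0/\tau_0$ (no $r_0$ term is needed) and require $\|y_0\|_{\mathbb X^0}\leq\delta_T:=e^{-M_T-C_T}$. This yields $\|Y_j\|_{\mathbb X^0}\leq e^{-M_T2^j}$, so $Y_j\to 0$ in $\mathbb X^0$. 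Repeating Step 3 of the three-dimensional proof with \eqref{eq:MT-choice-3d} then gives
\begin{equation*}
\|h_j\|_{L^2(0,\tau_j;L^2(\omega))}\leq 2^{-j}C_Le^{C_T+K_0/\tau_0}\|y_0\|_{\mathbb X^0}.
\end{equation*}
Since $C_T+K_0/\tau_0\leq C/T$, summing over $j$ yields \eqref{eq:nonlinear-control-cost-2d}. If some $Y_j$ vanishes, we set all subsequent controls to zero, and the solution remains zero by uniqueness.

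For the conclusion, we define $h_2(t)=h_j(t-s_j)$ on $I_j$. Its $L^2$ norm squared is the sum of the squares of the $\|h_j\|$, so it is bounded by the same cost. By uniqueness of global weak solutions, the solution of \eqref{eq:nonlinear-boussinesq-2d} on $[0,T]$ driven by $h_2$ coincides on each $I_j$ with the translated solution starting from $Y_j$, and hence $(u,\theta)(s_j)=Y_j$. The main point requiring care is the identification $S^{(2)}_T(y_0,h_2)=0$. For this we use the energy bound \eqref{eq:nonlinear-energy-bound-2d} on $I_j$. It gives
\begin{equation*}
\sup_{t\in I_j}\|(u,\theta)(t)\|_{\mathbb X^0}\leq C\bigl(\|Y_j\|_{\mathbb X^0}+\|h_j\|_{L^2(0,\tau_j;L^2(\omega))}\bigr)\to 0.
\end{equation*}
Since $(u,\theta)\in C([0,T];\mathbb X^0)$ and $s_j\uparrow T$, continuity yields $(u,\theta)(T)=0$. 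The constants $C$ and $T_0$ are those inherited from \Cref{thm:quantitative-linear-controllability} and are independent of $T$ and $y_0$.
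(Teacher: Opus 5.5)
Your proof is correct and follows the paper's argument essentially verbatim: the linear control from \Cref{thm:quantitative-linear-controllability} on the geometric partition $\tau_j=T(1-\rho)\rho^j$, the quadratic recurrence from \Cref{lem:quadratic-perturbation-2d}, the logarithmic summation of Steps 2--3 of the three-dimensional proof, and the energy bound \eqref{eq:nonlinear-energy-bound-2d} to pass to the limit as $t\uparrow T$. Dropping the $r_0$-term from $M_T$ and identifying $S^{(2)}_T(y_0,h_2)$ through the global weak solution on $[0,T]$ are legitimate simplifications that global well-posedness allows; your use of $h_2$ both for the second velocity-control component and for the temperature control is only a harmless notational clash.
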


\begin{proof}
For every $\tau\in(0,T_0)$ and $Y\in\mathbb X^0$,
\Cref{thm:quantitative-linear-controllability}, applied with $N=2$, gives a
control $h_2\in L^2(0,\tau;L^2(\omega))$ such that
\begin{equation}
\label{eq:linear-control-step-2d}
\Sigma_\tau^{(2)}(Y,h_2)=0,
\qquad
\|h_2\|_{L^2(0,\tau;L^2(\omega))}
\leq
C_L\exp\left(\frac{C_L}{\tau}\right)\|Y\|_{\mathbb X^0},
\end{equation}
for some $C_L>0$ independent of $\tau$ and $Y$.

We use the same partition of $(0,T)$ as in the proof of
\Cref{prop:time-iteration-3d}. Namely, let $\rho=2^{-1/2}$,
$\tau_j:=T(1-\rho)\rho^j$, $s_0:=0$, and
$s_{j+1}:=s_j+\tau_j$. Starting from $Y_0:=y_0$, we choose
$h_2^j\in L^2(0,\tau_j;L^2(\omega))$ satisfying
\eqref{eq:linear-control-step-2d} and set
$Y_{j+1}:=S_{\tau_j}^{(2)}(Y_j,h_2^j)$. The global well-posedness of
\eqref{eq:nonlinear-boussinesq-2d} guarantees that $Y_{j+1}$ is well defined
for every $j\in\mathbb N$.

Since $\Sigma_{\tau_j}^{(2)}(Y_j,h_2^j)=0$,
\Cref{lem:quadratic-perturbation-2d} gives
\begin{equation}
\label{eq:one-step-recursion-2d}
\|Y_{j+1}\|_{\mathbb X^0}
\leq
K_0\exp\left(\frac{K_0}{\tau_j}\right)
\|Y_j\|_{\mathbb X^0}^2
\end{equation}
for some $K_0\geq\max\{1,C_L\}$ independent of $j$ and $T$.

The remaining estimates are the same as in Steps 2 and 3 of the proof of
\Cref{prop:time-iteration-3d}, with $\mathbb X^1$ replaced by
$\mathbb X^0$. Those estimates only use the linear control bound
\eqref{eq:linear-control-step-2d} and the quadratic recurrence
\eqref{eq:one-step-recursion-2d}. Consequently, there exists $\delta_T>0$
such that, if $\|y_0\|_{\mathbb X^0}\leq\delta_T$, then
$Y_j\to0$ in $\mathbb X^0$ and
\begin{equation}
\label{eq:sum-controls-2d}
\sum_{j=0}^{\infty}
\|h_2^j\|_{L^2(0,\tau_j;L^2(\omega))}
\leq
C\exp\left(\frac{C}{T}\right)\|y_0\|_{\mathbb X^0}.
\end{equation}

For $t\in(s_j,s_{j+1})$, define $h_2(t):=h_2^j(t-s_j)$. Then
$h_2\in L^2(0,T;L^2(\omega))$, and
\eqref{eq:sum-controls-2d} gives
\eqref{eq:nonlinear-control-cost-2d}. By uniqueness, the solutions on the
successive intervals can be concatenated. Finally, for
$t\in(s_j,s_{j+1})$, estimate
\eqref{eq:nonlinear-energy-bound-2d} gives
\begin{equation*}
\left\|
S_{t-s_j}^{(2)}(Y_j,h_2^j)
\right\|_{\mathbb X^0}
\leq
C
\left(
\|Y_j\|_{\mathbb X^0}
+
\|h_2^j\|_{L^2(0,\tau_j;L^2(\omega))}
\right).
\end{equation*}
The right-hand side tends to zero as $j\to\infty$. Since
$s_j\uparrow T$, the concatenated solution converges to zero in
$\mathbb X^0$ as $t\uparrow T$. This proves the result.
\end{proof}

\subsection{Proof of the main result}

We now conclude the proof of the main theorem.

\begin{proof}[Proof of \Cref{thm:main-nonlinear-controllability}]
For $N=3$, the result follows from
\Cref{prop:time-iteration-3d}, since
$\mathbb X^1=\mathbf V\times H^1_0(\Omega)$. It is enough to take
$v=(h_1,0,0)$ and $v_0=h_2$.

For $N=2$, \Cref{prop:time-iteration-2d} gives the result in the larger space
$\mathbb X^0$. Since $\mathbb X^1\hookrightarrow\mathbb X^0$, it applies,
after adjusting $\delta_T$ if necessary, to the initial data considered in
the theorem. For such data, the weak solution has the additional regularity
given in \Cref{rem:nonlinear-wellposedness-2d} and coincides with the strong
solution by uniqueness. In this case, we take $v=(0,0)$ and $v_0=h_2$.

Finally, estimates \eqref{eq:nonlinear-control-cost-3d} and
\eqref{eq:nonlinear-control-cost-2d} give
\eqref{eq:main-nonlinear-control-cost} in their respective dimensions.
\end{proof}

\bibliographystyle{alpha}
\small{\bibliography{bib_bou_LR}}

\bigskip

\begin{flushleft}

\textbf{Víctor Hernández-Santamaría}\\
Departamento de Matem\'aticas, Facultad de Ciencias\\
Universidad Nacional Autónoma de México \\
Circuito Exterior, C.U.\\
04510, Coyoacán, CDMX, Mexico\\
\texttt{victor.santamaria@ciencias.unam.mx}

\end{flushleft}

\end{document}